\def\pinv{{positively invariant }}
\def\Pinv{{Positively invariant }}
\def\dyns{{dynamical system}}
   \newtheorem{theorem}{Theorem}[section]
   \newtheorem{proposition}[theorem]{Proposition}
   \newtheorem{lemma}[theorem]{Lemma}
   \newtheorem{corollary}[theorem]{Corollary}
   \newtheorem{definition}[theorem]{Definition}
   \newtheorem{example}[theorem]{Example}
   \newtheorem{remark}[theorem]{Remark}
\journal{Applied Mathematics and Computation}
\begin{document}

\begin{frontmatter}



\title{A Novel Unified Approach to Invariance Conditions for a Linear Dynamical System}


\author[siu]{Zolt\'{a}n Horv\'{a}th}
\address[siu]{Department of Mathematics and Computational Sciences, Sz\'{e}chenyi Istv\'{a}n
University, 9026 Gy\H{o}r, Egyetem t\'{e}r 1, Hungary}

\author[lehigh]{Yunfei Song\corref{mycorrespondingauthor}} 
\cortext[mycorrespondingauthor]{Corresponding author}
\ead{yus210@lehigh.edu}

\author[lehigh]{Tam\'{a}s
Terlaky}
\address[lehigh]{Department of Industrial and Systems
Engineering, Lehigh University, \\200 West Packer Avenue, Bethlehem,
PA, 18015, United States}

\begin{abstract}
In this paper, we propose a novel, simple, and unified  approach to explore sufficient and necessary conditions, i.e., invariance conditions,  under which four classic families of convex sets, namely, polyhedra, polyhedral cones, ellipsoids, and
Lorenz cones, are invariant sets for a linear discrete or continuous  dynamical system. 

For discrete dynamical systems, we use the Theorems of Alternatives, i.e., Farkas lemma and \emph{S}-lemma, to obtain simple and general proofs to derive invariance conditions. This novel method establishes a solid connection between optimization theory and dynamical system. Also,  using  the $\textit{S}$-lemma allows us to extend invariance conditions to any set represented by a quadratic inequality. Such sets include nonconvex and unbounded sets. 

For continuous dynamical systems, we use the forward or backward Euler method to obtain the corresponding discrete dynamical  systems  while preserves  invariance. This enables us to develop a novel and elementary method to derive invariance conditions for continuous dynamical systems by using the ones for the corresponding discrete systems. 

Finally, some numerical examples are presented to illustrate these invariance conditions.

\end{abstract}

\begin{keyword}


Invariant Set, Dynamical System, Polyhedron,  Lorenz Cone, Farkas Lemma, \textit{S}-Lemma
\end{keyword}

\end{frontmatter}

\section{Introduction}

\Pinv sets play a key role in the theory and applications of \dyns s. Stability, control and preservation of constraints of \dyns s can be formulated, somehow in a geometrical way, with the help of \pinv sets. For a given \dyns, both of continuous or discrete time, a subset of the state space is called \pinv set for the \dyns\ if containing the system state at a certain time then forward in time all the states remain within the \pinv set. Geometrically, the trajectories cannot escape from a \pinv set if the initial state belongs to the set. The \dyns\ is often a controlled system of which the maximal (or minimal) \pinv set is to be constructed.

It is well known, see e.g., Blanchini \cite{Blanchini2}, Blanchini and Miani \cite{Blanchini3}, and Polanski \cite{polan}, that the
Lyapunov stability theory is  used as a powerful tool in obtaining
many important results in control theory. The basic framework of the
Lyapunov stability theory synthesizes the identification and
computation of a Lyapunov function of a dynamical system. Usually
positive definite quadratic functions serve as  candidate
Lyapunov functions. Sufficient and necessary conditions for positive
invariance of a polyhedral set with respect to discrete dynamical
systems were first proposed by Bitsoris \cite{bits1,bits2}.
 A novel positively invariant polyhedral cone was
constructed by Horv\'{a}th \cite{horva}. The Riccati equation was
proved to be connected with ellipsoidal sets as invariant sets of
linear dynamical systems, see e.g., Lin et al. \cite{lina} and Zhou
et al. \cite{zhou}. 
Birkhoff \cite{birkhoff} proposed a
necessary condition  for positive invariance on a convex cone for linear discrete system. A sufficient and necessary condition for positive invariance on a nontrivial convex set for linear discrete systems was derived by  Elsner \cite{els1}. Stern \cite{stern1} studied the properties of positive invariance on a proper cone for linear continuous systems.
For a more general case, the  mapping from a polyhedral cone to another polyhedral cone was studied by Haynsworth, Fiedler and Pt\'{a}k \cite{tans1}, and the mapping from a convex cone to another convex cone  in finite-dimensional spaces was studied by Tam \cite{tam34, tam12}. Here we note that when the two cones  are the same, then this is equivalent to positive invariance for discrete system. The concept of cross positive matrices, which was introduced
by Schneider and Vidyasagar \cite{schne}, are used as tools  to prove
positive invariance of a Lorenz cone by Loewy and Schneider \cite{loewy}. According to Nagumo's theorem \cite{nagu} and
the theory of cross positive matrices, Stern and Wolkowicz
\cite{stern} presented sufficient and necessary conditions for a
Lorenz cone to be positively invariant with respect to a linear
continuous system. A novel proof of the spectral characterization of
real matrices that leave a polyhedral cone invariant was proposed
by Valcher and Farina \cite{valch}. The spectral properties of the
matrices, e.g., theorems of Perron-Frobenius type, were connected to
set positive invariance by Vandergraft \cite{schne}. Recently, the discrete system has been extended to the case when the state variable belongs to the tangent bundle of a Riemannian manifold or a Lie algebra  by Fiori, see, \cite{fiori2, fiori1}. 
The problem of the unconditional invariance is posed for the first time in the history of control theory by Shipanov \cite{shi1}. 
Gusev and Likhtarnikov \cite{lik1} present a survey of the history of two fundamental results of the mathematical system theory - the Kalman-Popov-Yakubovich lemma and the theorem of losslessness of the \emph{S}-procedure. For an excellent book about the \emph{S}-procedure the reader is referred to \cite{aizer} by Aizerman and Gantmacher. 
An extension of invariance conditions to nonlinear dynamical system can be found in \cite{horv2016}.

 Mathematical modeling of many problems from the real world often leads to differential equations in continuous form. When we solve these differential equations numerically,  we not only need to  obtain a good approximation of the differential equations, but also hope to preserve the basic characteristics of these mathematical variables and models. Invariance preserving is one of the latter type requirements. In fact, there are various characteristics preserving topics, e.g., positivity preserving, strong stability preserving, area preserving, etc, which are extensively studied in recent decades. 
\emph{1). Positivity Preserving:} 
Positivity preserving is an important topic in the numerical analysis community, see, e.g.,  \cite{horva, horv05, zhang1, zhang2, zhang3}. Positivity preserving is equivalent to invariance preserving in the positive orthant, i.e.,  consider the positive orthant, which is a polyhedral cone. Let us assume that the positive orthant  is an invariant set for a continuous system, and assume that it  is also an invariant set for the discrete system which is obtained by using a discretization method with a certain steplength. In practice, many variables, e.g.,  energy, density, mass, etc, are nonnegative. When these variables are used in some mathematical models in a continuous form, e.g., in the heat equation, one should choose appropriate discretization method with appropriate steplength such that solution of the the discretized systems are also nonnegative.    
\emph{2). Strong Stability Preserving (SSP):}
Strong stability preserving (SSP) numerical methods are developed to solve ordinary differential equations, see, e.g., \cite{sspbook, siga2},  etc.  Particularly, SSP numerical method are used for the time integration of semi-discretizations of hyperbolic conservation laws.  It is well known that the exact solutions of scalar conservation laws holds the property that  total variation does not increase in time, see, e.g., \cite{siga2}. 
SSP methods are also referred to as total variation diminishing methods. These   are higher order numerical methods that also preserve this property.  
\emph{3). Area Preserving-Symplectic Methods:}
Intuitively, a map from the phase-plane to itself is said to be symplectic if it preserves areas.
 In mathematics, a  matrix $M\in \mathbb{R}^{2n\times 2n}$ is called symplectic if it satisfies the condition
$M^T \Omega M = \Omega,$
where 
$
\Omega =
\begin{bmatrix}
0 & I_n \\
-I_n & 0 \\
\end{bmatrix}.$ A symplectic map is a real-linear map $T$ that preserves a symplectic form $f$, i.e., 
$ f(Tx,Ty)=f(x,y) $
for all $x, y$, see, e.g., \cite{meyer1}.
A numerical one-step method $x_{n+1} = D_{\Delta t}(x_n)$ is called symplectic if, when applied
to a Hamiltonian system, the discrete flow $x\rightarrow D_{\Delta t}(x)$ is a symplectic map
for all sufficiently small step sizes, see, e.g., \cite{feng1, mark1}, etc. There is one compelling example that shows symplectic methods are the right way to solve planetary trajectories. If we solve the trajectory of the earth using forward Euler method, then the discrete trajectory will spiral away from the sun. If we use backward Euler method, then the discrete trajectory will sink into the sun. If we use symplectic methods, then the discrete trajectory will stay on the original continuous trajectory. 

In many applications, the models are represented as a partial differential equation (PDE),  e.g., heat equation, then certain numerical methods, e.g., finite difference methods, finite element methods, etc., may be first applied to the spatial variable to obtain a ODE (dynamical system). The numerical methods for ODE are then used to obtain the discrete form of the model. Therefore,  invariance condition for a ODE (dynamical system) is crucial for models even within a PDE form. We point out that the invariance condition for the numerical methods for the spatial variable of the PDE is an important research topic but out of the scope of this paper.

In this paper we deal with dynamical systems in finite dimensional spaces and introduce a novel and unified method for the determination of whether a set  is a \pinv set for a linear \dyns. Here the sets are ellipsoids, polyhedral sets or - not necessarily convex - second order sets including Lorenz cones. In addition, we formulate optimization methods to check the resulting equivalent conditions. 

The main tool in the continuous time case consists of the explicit computation of the tangent cones of the \pinv sets and their application along the lines of the Nagumo theorem \cite{nagu}. This theorem says that a set is positively invariant, under some conditions on solvability of the underlying differential equation, if and only if at each point of the set, the vector field of the differential equation  points toward the tangent cone at that point. The resulting conditions are constructive in the sense that they can be checked by well established optimization methods.
Our unified approach is based on optimization methodology. The analysis in the discrete case is based on the theorems of alternatives of optimization, namely on the Farkas lemma \cite{roos} and the \textit{S}-lemma \cite{polik,yaku}. 
The name S-lemma is due to the name of a Lagrange function that corresponds to the constrained optimization in \cite{lik1}. Lagrange multipliers method as a penalty method of constrained nonlinear optimization can refer to \cite{flec1}.
Let us mention that the technique with the tangent cones in the continuous time case and the theorem of alternatives of optimization in the discrete case show common features. 

First, in the paper, we consider various sets as candidates for
positively invariant sets with respect to a discrete system.
Sufficient and necessary conditions for the four types of sets are
derived using the Farkas lemma \cite{roos} and the \textit{S}-lemma
\cite{polik,yaku}, respectively. The Farkas lemma and the
\textit{S}-lemma are frequently referred to as Theorems of the
Alternatives in the optimization literature. Note that the approach
based on the Farkas lemma is originally due to Hennet \cite{hennet}.  Our approach, based on the \textit{S}-lemma for
ellipsoids and Lorenz cones, is not only simpler compared to the
traditional Lyapunov theory based approach, but also highlights the strong
relationship between control and optimization theories. It also enables us to extend  invariance conditions to any set represented by a quadratic inequality. Such sets include nonconvex and unbounded sets.
Positively
invariant sets for continuous systems are linked to the ones for
discrete systems by applying Euler method. The forward Euler method
or backward Euler method is used to discretize a continuous system
to a discrete system. According to \cite{blannew, blan12, song2}, we have that both
the continuous and discrete systems can share the same set as a
positively invariant set when forward or backward Euler methods are used and when the discretization steplength is bounded by a certain
value. In \cite{song2}, we prove that there exists a uniform upper bound of the
steplength for both the forward and backward Euler methods such that the discrete and continuous systems
can share a polyhedron or a polyhedral cone as a positively invariant set (for an ellipsoid or a
Lorenz cone, there exists a uniform upper bound of the steplength for the
backward Euler method). An efficient algorithm to derive the uniform steplength threshold for invariance preserving for certain discretization methods on a polyhedron  is presented in  \cite{song3}.
 Then, sufficient and necessary
conditions under which the four types of convex sets are positively
invariant sets for the continuous systems are derived by using 
Euler methods and the corresponding sufficient and necessary
conditions for  the discrete systems.

The main novelty of this paper is that we propose a simple, novel,
unified approach to derive invariance conditions for the four types of sets to be
positively invariant sets with respect to discrete systems.  Our
approach is based on the so-called Theorems of Alternatives, i.e., Farkas lemma and \emph{S}-lemma. For discrete systems, the Farkas lemma is used for polyhedral sets, while the \textit{S}-lemma is used for  ellipsoids and Lorenz
cones. We also establish a
framework according to Euler methods to derive invariance conditions for the
four types of sets with respect to the continuous systems to be
positively invariant. Although some theorems presented in this paper are known, there is no existing paper considering  invariance conditions for the four types of sets, and both for discrete and continuous dynamical systems together in a unified framework. We also strengthen the power of Euler methods as a tool to study invariance conditions to build  connection between continuous and discrete  dynamical systems. 

\emph{Notation and Conventions.} To avoid unnecessary repetitions,
the following notations and conventions are used in this paper.
A dynamical system, positively
invariant, and sufficient and necessary condition for positive
invariance are called a \emph{system},  \emph{invariant}, and
\emph{invariance condition}, respectively.
  The sets considered in this paper are
non-empty, closed, and convex sets if not specified otherwise.
    The interior and the boundary of a set $\mathcal{S}$ is denoted by
int$(\mathcal{S})$ and  $\partial \mathcal{S}$, respectively.
  A symmetric positive definite, positive semidefinite, negative definite, or negative semidefinite matrix $Q$ is denoted by $Q\succ0$, $ Q\succeq0, Q\prec0,$  or $
Q\preceq0$, respectively.
The $i$-th row of a matrix $G$ is denoted by $G_i^T.$
   The
eigenvalues of a real symmetric matrix $Q$, whose eigenvalues are
always real, are ordered as 
$\lambda_1\geq\lambda_2\geq...\geq\lambda_n,$ and the corresponding orthonormal set of  
eigenvectors is denoted by $\{u_1,u_2,...,u_n\}$.
  The
  spectral radius  of $Q$ is represented by $\lambda(Q)=\max\{|\lambda_i(Q)|\}$, and inertia$\{Q\}=\{\alpha,\beta,\gamma\}$ indicates that the number
of positive, zero, and negative eigenvalues of $Q$ are $\alpha,\beta$, and $\gamma$, respectively.
  The index set $\{1,2,...,n\}$ is
denoted by $\mathcal{I}(n).$
 The inner product of vectors $x,y\in \mathbb{R}^n$ is represented by
  $x^Ty$.

This paper is organized as follows: in Section \ref{sec2}, the related basic concepts and theorems are introduced. Our main results are shown in Section \ref{sec:invcond}, in which invariance conditions of polyhedral sets, ellipsoids, and Lorenz cones for continuous and discrete systems are presented. In Section \ref{sec:exam}, some numerical examples are given to illustrate the invariance conditions presented in Section \ref{sec:invcond}. Finally, our conclusions are summarized in Section \ref{sec:con}.

\section{Basic Concepts and Theorems}\label{sec2}
In this section, the basic concepts and theorems related to invariant sets for dynamical systems are introduced. 

\subsection{Linear Dynamical System}
In this paper, we consider discrete and continuous linear dynamical
systems, respectively described by the following equations:
\begin{equation}\label{dyna2}
x_{k+1}=B_kx_k,
\end{equation}
\begin{equation}\label{dyna1}
\dot{x}(t)=Ax(t),
\end{equation}
where $B_k, A\in \mathbb{R}^{n\times n}$ are constant real matrice, $
x_k,  x(t)\in \mathbb{R}^n$ are the state variables, $t\in
\mathbb{R}$, and $k\in \mathbb{N}$. We may assume, without loss of
generality, that $B_k $ and $A$ are not the zero matrix. The study of invariant sets is
the main subject of this paper, thus now we introduce invariant sets
for both discrete and continuous linear systems.
Note that equations (\ref{dyna2}) and (\ref{dyna1}) can be treated as autonomous systems or as controlled systems. In the latter case, the coefficient matrix $B_k$ and $A$ in  (\ref{dyna2}) or (\ref{dyna1}) can be represented in the form of $C+DF$, where $C$ is the open-loop state matrix, $D$ is the control matrix, and $F$ is the gain matrix\footnote{For simplicity, we take a discrete system as an example. In this case, the system is represented as follows: $x_{k+1}=Cx_k+Du_k$, where $x_k$ is the state variable, $u_k$ is the control variable, and $u_k=Fx_k$. Thus,  this equation is equivalent to $x_{k+1}=(C+DF)x_k.$}. 

\begin{definition}\label{def1}
A set $\mathcal{S}\subseteq \mathbb{R}^n$ is an  invariant  set for
the discrete system (\ref{dyna2}) if  $ x_k\in \mathcal{S}$
implies $ x_{k+1}\in \mathcal{S}$, for all
 $k\in \mathbb{N}$.
\end{definition}

\begin{definition}\label{def2}
A set $\mathcal{S}\subseteq \mathbb{R}^n$ is an invariant set for
the continuous system (\ref{dyna1}) if $x(0)\in \mathcal{S}$ implies
$x(t)\in\mathcal{S}$,  for all $t\geq0$.
\end{definition}

In fact, the sets given in Definition \ref{def1}
and \ref{def2}  are conventionally  referred to as positively invariant sets. Considering that only positively invariant sets are studied in this
paper, we simply call them
invariant sets. One can prove the following
properties:  the operators $B_k$ (or\footnote{The exponential function with respect to a matrix is defined as $e^{At}=\sum_{k=0}^\infty \frac{1}{k!}(A^kt^k)$.} for all $t\geq0,$ $e^{At}$) leave $\mathcal{S}$ invariant if $\mathcal{S}$ is an invariant set for the discrete (or continuous) systems.

\begin{proposition}\label{prop1} \emph{\cite{bellm, boyd}}
The set $\mathcal{S}$ is an invariant set for the discrete system
(\ref{dyna2}) if and only if  $B_k\mathcal{S}\subseteq \mathcal{S}$.
Similarly, the set $S$ is an invariant set for the continuous system
(\ref{dyna1}) if and only if for all $t\geq0$,
$e^{At}\mathcal{S}\subseteq \mathcal{S}$.
\end{proposition}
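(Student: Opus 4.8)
The plan is to prove each direction of the equivalence for the discrete case, and then reduce the continuous case to the discrete one by viewing $e^{At}$ as the transition operator of a discrete system.

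\medskip

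\textbf{Discrete case.} First I would show the ``only if'' direction. Assume $\mathcal{S}$ is invariant for \eqref{dyna2}. Pick any $y \in \mathcal{S}$ and set $x_0 = y$. Then the sequence $x_1 = Ax_0 = Ay$ lies in $\mathcal{S}$ by Definition \ref{def1}, so $Ay \in \mathcal{S}$; since $y$ was arbitrary, $A\mathcal{S} \subseteq \mathcal{S}$. Conversely, assume $A\mathcal{S} \subseteq \mathcal{S}$. I would argue by induction on $k$: if $x_k \in \mathcal{S}$, then $x_{k+1} = Ax_k \in A\mathcal{S} \subseteq \mathcal{S}$, which is exactly the defining implication in Definition \ref{def1}. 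This direction is essentially immediate once one unwinds the definitions, so I expect no obstacle here.

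\medskip

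\textbf{Continuous case.} For the ``if'' direction, suppose $e^{At}\mathcal{S} \subseteq \mathcal{S}$ for all $t \ge 0$. Given $x(0) \in \mathcal{S}$, the unique solution of \eqref{dyna1} is $x(t) = e^{At}x(0)$, which lies in $e^{At}\mathcal{S} \subseteq \mathcal{S}$ for every $t \ge 0$; hence $\mathcal{S}$ is invariant by Definition \ref{def2}. For the ``only if'' direction, suppose $\mathcal{S}$ is invariant for \eqref{dyna1}. Fix $t_0 \ge 0$ and any $y \in \mathcal{S}$; taking $x(0) = y$ gives $e^{At_0}y = x(t_0) \in \mathcal{S}$, so $e^{At_0}\mathcal{S} \subseteq \mathcal{S}$. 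Since $t_0 \ge 0$ was arbitrary, this completes the proof.

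\medskip

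The only genuinely delicate point — and the one I would be careful to state explicitly — is that I am using existence and uniqueness of solutions of the linear ODE \eqref{dyna1} together with the closed-form $x(t) = e^{At}x(0)$; this is standard, but it is the single fact the argument rests on beyond pure set manipulation. (One could alternatively note that $e^{At}$ satisfies the semigroup property $e^{A(s+t)} = e^{As}e^{At}$, so that fixing a steplength $h>0$ makes $\{x(kh)\}_{k\in\mathbb{N}}$ a discrete system with transition matrix $e^{Ah}$, and invoke the discrete case; but one still needs the ODE solution formula to pass between the discrete sampling and genuine invariance for all $t \ge 0$.) Everything else is a routine induction and unwinding of Definitions \ref{def1} and \ref{def2}.
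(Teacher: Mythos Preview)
Your proof is correct; both directions in both cases are handled by a straightforward unwinding of Definitions~\ref{def1} and~\ref{def2} together with the solution formula $x(t)=e^{At}x(0)$, exactly as you describe. The paper itself does not give a proof of this proposition: it is stated with a citation to \cite{bellm, boyd} and used as a standing tool, so there is nothing further to compare against.
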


\subsection{Convex Sets}\label{conset}
In this paper, we investigate invariance conditions for 
some classical convex sets, namely polyhedral sets, ellipsoids,  and
Lorenz cones.

A \emph{polyhedron}, denoted by $\mathcal{P}\subseteq \mathbb{R}^n$, can be defined as the
intersection of a finite number of half-spaces:
\begin{equation}\label{poly1}
\mathcal{P}=\{x\in \mathbb{R}^n\,|\,Gx\leq b\},
\end{equation}
where $G\in \mathbb{R}^{m\times n} $, $ b\in \mathbb{R}^m$, or
equivalently, as the sum of the convex combination of a finite number of points
and the conic combination of a finite number of  vectors:
\begin{equation}\label{poly2}
\mathcal{P}=\Big\{x\in \mathbb{R}^n\,|\,x=\sum_{i=1}^{\ell_1}\theta_i
x^i+\sum_{j=1}^{\ell_2}\hat{\theta}_j\hat{x}^j,~
\sum_{i=1}^{\ell_1}\theta_i=1, \theta_i\geq0, \hat{\theta}_j\geq
0\Big\},
\end{equation}
where $x^1,...,x^{\ell_1},\hat{x}^1,...,\hat{x}^{\ell_2}\in
\mathbb{R}^n$.  The \emph{vertices} of $\mathcal{P}$ form a subset of $x^i, i\in \mathcal{I}(\ell_1)$, and the \emph{extreme rays} of $\mathcal{P}$ are represented as $x^i+\alpha\hat{x}^j, \alpha>0,$ for some $i\in \mathcal{I}(\ell_1)$ and $j\in \mathcal{I}(\ell_2).$  We highlight that a bounded polyhedron, i.e., $\ell_2=0$ in
(\ref{poly2}), is called a \emph{polytope}.

A \emph{polyhedral cone}, denoted by $\mathcal{C_P}\subseteq \mathbb{R}^n$, can be also considered
as a special class of polyhedra, and  it can be defined as:
\begin{equation}\label{polycone1}
\mathcal{C_P}=\{x\in \mathbb{R}^n\,|\,Gx\leq 0\},
\end{equation}
or equivalently,
\begin{equation}\label{polycone2}
\mathcal{C_P}=\Big\{x\in
\mathbb{R}^n\,|\,x=\sum_{j=1}^{\ell}\hat{\theta}_j\hat{x}^j,~
\hat{\theta}_j\geq 0\Big\},
\end{equation}
where $G\in \mathbb{R}^{m\times n}$, and
$\hat{x}^1,...,\hat{x}^{\ell}\in \mathbb{R}^n$.

An \emph{ellipsoid}, denoted by $\mathcal{E}\subseteq \mathbb{R}^n$, centered at the
origin, is defined as:
\begin{equation}\label{elli}
\mathcal{E}=\{x\in\mathbb{ R}^n \,|\, x^TQx\leq 1\},
\end{equation}
where  $Q\in \mathbb{R}^{n\times n}$ and $Q\succ0$. Any ellipsoid with nonzero center can be transformed to an ellipsoid centered at the origin.

A \emph{Lorenz cone}\footnote{A Lorenz cone is sometimes also called an
ice cream cone,  a second order cone, or an ellipsoidal cone.}, denoted by $\mathcal{C_L}\subseteq \mathbb{R}^n$,
with vertex at the origin,  is defined as:
\begin{equation}\label{ellicone}
\mathcal{C_L}=\{x\in \mathbb{R}^n\,|\,x^TQx\leq 0,~ x^Tu_n\geq0\},
\end{equation}
where $Q\in \mathbb{R}^{n\times n}$ is a symmetric nonsingular
matrix with one negative eigenvalue $\lambda_n$, i.e., ${\rm
inertia}\{Q\}=\{n-1,0,1\}$, and $u_n$ is the eigenvector corresponding to the only negative eigenvalue $\lambda_n$. Similar to ellipsoids, any Lorenz cone with nonzero vertex can be transformed to a Lorenz cone with vertex at the origin.
For every Lorenz cone given as in (\ref{ellicone}), there exists an orthonormal basis
$\{u_1,u_2,...,u_n\}$, {i.e.}, $u_i^Tu_j=\delta_{ij}$, where $u_i$
is the eigenvector corresponding to the eigenvalue, $\lambda_i$, of $Q$, and
$\delta_{ij}$ is the Kronecker delta function, such that
$Q=U\Lambda^{\frac{1}{2}}\tilde{I}\Lambda^{\frac{1}{2}}U^T,$ 
where
$\Lambda^{\frac{1}{2}}=\text{{diag}}\{\sqrt{\lambda_1},...,\sqrt{\lambda_{n-1}},\sqrt{-\lambda_n}\}$
and $\tilde{I}=\text{{diag}}\{1,...,1,-1\}$. In particular,
the Lorenz cone with $Q=\tilde{I}$ is denoted by $\mathcal{K}_n$, then we have $ \mathcal{K}_n=\{x\in \mathbb{R}^n\,|\, x^T\tilde{I}x\leq 0,
x^Te_n\geq0\}, $ where $e_n=(0,...,0,1)^T.$ We call $\mathcal{K}_n$ the  \emph{standard Lorenz cone}.


\subsection{Basic Theorems}

The  Farkas lemma \cite{roos} and the \textit{S}-lemma \cite{polik,yaku},
both of which are also called the Theorem of Alternatives,
are fundamental tools to derive  invariance conditions for
discrete systems in our study. The \textit{S}-lemma proved by Yakubovich
\cite{yaku} is somewhat analogous to a special case of the
nonlinear Farkas lemma, see P\'{o}lik and Terlaky \cite{polik}.

\begin{theorem}\label{farkas2} \textbf{\emph{(Farkas lemma \cite{roos})}}
Let $P\in \mathbb{R}^{m\times n}, $ $d\in \mathbb{R}^m, c\in\mathbb{R}^n,$ and $\beta\in\mathbb{R}$. Then
the following two statements are equivalent:
\begin{enumerate}
  \item There is no $y\in \mathbb{R}^m$, such that  $P^Ty\leq c$ and  $d^Ty> \beta$;
    \item  There exists a vector $z\in \mathbb{R}^n$, such that $z\geq0,Pz=d,$
  and $c^Tz\leq \beta$.
\end{enumerate}
\end{theorem}

\begin{theorem}\label{slemm2}
\emph{\textbf{(\textit{S}-lemma \cite{polik,yaku})}}  Let
$g(y),r(y):\mathbb{R}^n\rightarrow \mathbb{R}$ be quadratic functions,
and suppose that there is a $\hat{y}\in \mathbb{R}^n$ such that
$r(\hat{y})<0$. Then the following two statements are equivalent:
\begin{enumerate}
  \item There exists no $y\in \mathbb{R}^n$, such that $g(y)<0, r(y)\leq0.$
  \item There exists a scalar $\rho\geq 0$, such that
  $g(y)+\rho r(y)\geq0,$ for all $y\in \mathbb{R}^n.$
\end{enumerate}
\end{theorem}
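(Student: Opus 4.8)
The implication $(2)\Rightarrow(1)$ is immediate: were there a $y$ with $g_1(y)<0$ and $g_2(y)\le 0$, then $g_1(y)+\rho g_2(y)<0$ for every $\rho\ge 0$, contradicting $(2)$. All the content lies in the converse, and the plan is to recast $(1)\Rightarrow(2)$ as a separation statement in the plane.

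First I would homogenize. Writing $g_i(y)=y^TA_iy+2b_i^Ty+c_i$, I associate to $g_i$ the homogeneous quadratic form $\tilde g_i(y,t)=y^TA_iy+2tb_i^Ty+c_it^2$ on $\mathbb{R}^{n+1}$, so that $g_i(y)=\tilde g_i(y,1)$ and $\tilde g_i(y,t)=t^2g_i(y/t)$ for $t\ne 0$. I then consider the joint range $W=\{(\tilde g_1(z),\tilde g_2(z)):z\in\mathbb{R}^{n+1}\}\subseteq\mathbb{R}^2$: it is a cone because the $\tilde g_i$ are homogeneous of degree two, and it is \emph{convex} by Dines's theorem on the joint range of two real quadratic forms. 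To prove $(2)$ it suffices to produce $\rho\ge 0$ with $\tilde g_1(z)+\rho\tilde g_2(z)\ge 0$ for all $z$ (then restrict to $z=(y,1)$); equivalently, to show that the dual cone $W^{*}=\{(p,q):pu+qv\ge 0\ \forall\,(u,v)\in W\}$ contains a point $(1,\rho)$ with $\rho\ge 0$.

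Next I would use $(1)$ and the Slater point $\hat y$ to show that $W$ avoids $N=\{(u,v):u<0,\ v\le 0\}$. Suppose $(\tilde g_1(z),\tilde g_2(z))\in N$ for some $z=(y',t')$. If $t'\ne 0$, dividing by $t'^{2}$ shows that $y'/t'$ violates $(1)$. If $t'=0$, I look along the line $\lambda\mapsto(\hat y,1)+\lambda(y',0)=(\hat y+\lambda y',1)$: each $\tilde g_i$ restricted to it is a quadratic polynomial in $\lambda$ with leading coefficient $\tilde g_i(y',0)$, and since $\tilde g_1(y',0)<0$ while $\tilde g_2(\hat y,1)=g_2(\hat y)<0$, a short case check on the sign of the linear term produces a $\lambda$ with $g_1(\hat y+\lambda y')<0$ and $g_2(\hat y+\lambda y')<0$, again contradicting $(1)$. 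Hence $W\cap N=\emptyset$; in particular $W$ is disjoint from the open third quadrant $O=\{(u,v):u<0,\ v<0\}$.

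Finally I would separate. Since $W$ is convex and $O$ is open, convex and disjoint from $W$, there is a nonzero $(p,q)$ and a constant $\alpha$ with $pu+qv\ge\alpha$ on $W$ and $pu+qv\le\alpha$ on $\overline{O}=\{(u,v):u\le 0,\ v\le 0\}$; since $0$ lies in both $W$ and $\overline{O}$ we get $\alpha=0$. Testing $\overline{O}$ at $(-1,0)$ and $(0,-1)$ gives $p\ge 0$ and $q\ge 0$. The case $p=0$ is impossible: it would force $q>0$ and $\tilde g_2(z)\ge 0$ for all $z$, hence $g_2(\hat y)\ge 0$, against the Slater assumption. Therefore $p>0$, and $\rho:=q/p\ge 0$ satisfies $\tilde g_1(z)+\rho\tilde g_2(z)\ge 0$ for all $z$; evaluating at $z=(y,1)$ yields $g_1(y)+\rho g_2(y)\ge 0$ for all $y$, which is $(2)$. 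The step I expect to be the real obstacle is controlling the ``points at infinity'', i.e., trading a violator of $W$ on $\{t=0\}$ for a genuine violator of $(1)$; this is exactly where $g_2(\hat y)<0$ is indispensable, and it is also what kills the degenerate separator $p=0$. The remaining ingredients are the trivial direction, the homogenization bookkeeping, Dines's convexity theorem, and the standard separation of a convex set from a disjoint open convex set.
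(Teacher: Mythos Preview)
The paper does not prove this theorem at all: it is quoted as the classical \textit{S}-lemma with references to Yakubovich and to the survey of P\'{o}lik and Terlaky, and is used only as a tool. So there is no ``paper's proof'' to compare against.

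Your proposal is a correct and essentially self-contained proof, following what is by now the standard route: homogenize to quadratic forms on $\mathbb{R}^{n+1}$, invoke Dines's theorem to get convexity of the joint range $W$, use statement~(1) together with the Slater point $g_2(\hat y)<0$ to show $W$ misses the open third quadrant (handling the ``$t=0$'' points by perturbing along $\hat y+\lambda y'$), separate, and rule out the degenerate separator $p=0$ again via the Slater point. All steps are sound; in particular the case analysis for $t'=0$ goes through because when $y'^TA_2y'=0$ the function $\lambda\mapsto g_2(\hat y+\lambda y')$ is affine with negative value at $\lambda=0$, so its negativity set is an unbounded half-line along which $g_1$, having negative leading coefficient, eventually becomes negative. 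This is exactly the proof one finds in the P\'{o}lik--Terlaky survey that the paper cites, so your approach is fully consistent with the source the paper relies on.
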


Proposition \ref{prop1} allows us to use the Theorems of Alternatives \ref{farkas2} and \ref{slemm2} to derive  invariance conditions for discrete systems.  According to Proposition \ref{prop1},  to prove that a set $\mathcal{S}$ is an invariant set for a discrete system, we need to prove $A\mathcal{S}\subseteq \mathcal{S}$, which is equivalent to $(\mathbb{R}^n\setminus\mathcal{S})\cap(A\mathcal{S})=\emptyset.$ Since we assume that $\mathcal{S}$ is a closed set, we have that $\mathbb{R}^n\setminus\mathcal{S}$ is an open set.  Open sets are usually represented by strict inequalities. As the Theorems of Alternatives include strict inequalities, they provide the proper tools to characterize invariance conditions for continuous and discrete systems. 
This is one of the statements in the Theorems of Alternatives \ref{farkas2} or \ref{slemm2}.\\

For  invariance conditions for continuous systems, the concept of  \emph{tangent cone} plays an important role in our analysis.

\begin{definition}\label{intro:def-TanCone}
Let
$\mathcal{S}\subseteq\mathbb{R}^n$ be a closed convex set, and $x\in \mathcal{S}$.
 The tangent cone  of
$\mathcal{S}$ at $x$, denoted by $\mathcal{T}_\mathcal{S}(x)$, is given as
\begin{equation}\label{tancon}
\mathcal{T}_\mathcal{S}(x)=\Big\{y\in
\mathbb{R}^n\;\Big|\;\underset{t\rightarrow0^+}{\lim\inf}\frac{{\emph{dist}}(x+ty,\mathcal{S})}{t}=0\Big\},
\end{equation}
where   $\emph{dist}(x,\mathcal{S})=\inf_{s\in\mathcal{S}}\|x-s\|.$
\end{definition}

A geometrical interpretation of tangent cones is given by the left side picture of  Figure \ref{fig1}. The tangent cone at vertex $c_1$ is the red colored NW-SE shaded cone, and the tangent cone at extreme point $c_2$ is the green color SW-NE shade half space, which is also a cone. 

\begin{figure}[h]
    \centering
     \includegraphics[width=0.5\textwidth]{./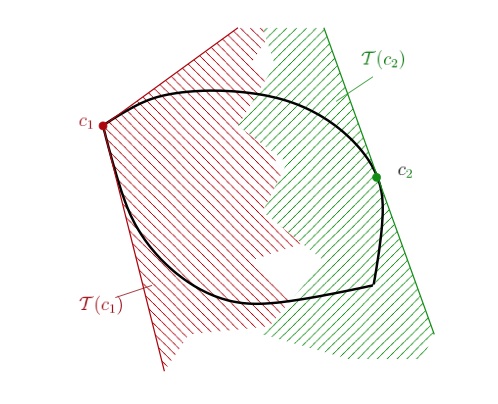}
    \includegraphics[width=0.4\textwidth]{./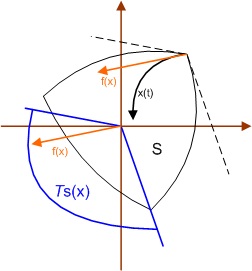}
    \caption{Tangent Cone (left) and Nagumo Theorem (right).}
    \label{fig1}
\end{figure}

The following classic result proposed by Nagumo \cite{nagu}
provides a general criterion to determine whether a  closed convex set is an invariant set for a continuous
system. This theorem, however, is not valid for discrete systems, for which
one can find a counterexample in \cite{Blanchini}.

\begin{theorem}\label{nagumo}
\emph{\textbf{(Nagumo  \cite{Blanchini, nagu})}} Let
$\mathcal{S}\subseteq\mathbb{R}^n$ be a closed convex set, and
assume that the system $\dot{x}(t)=f(x(t))$, where
$f:\mathbb{R}^n\rightarrow \mathbb{R}^m$ is a continuous mapping,
admits a globally unique solution for every initial point $x(0)\in
\mathcal{S}$. Then $\mathcal{S}$ is an invariant set for this system
if and only if
\begin{equation}\label{cond3}
f(x)\in \mathcal{T}_\mathcal{S}(x),\text{  for all }  x\in \partial
\mathcal{S},
\end{equation}
where  $\mathcal{T}_\mathcal{S}(x)$ is the tangent cone of
$\mathcal{S}$ at $x.$
\end{theorem}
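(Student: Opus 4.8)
\emph{Proof strategy.} I would prove the two implications separately: the ``only if'' direction is a one-line first-order estimate, while the ``if'' direction carries the real content. For necessity, assume $\mathcal{S}$ is invariant, fix $x\in\partial\mathcal{S}$, and let $x(\cdot)$ be the unique solution with $x(0)=x$. Since $x(\cdot)$ is differentiable at $0$ with $\dot x(0)=f(x)$, I would write $x(t)=x+tf(x)+o(t)$ and use $x(t)\in\mathcal{S}$ for $t\ge 0$ to get $\mathrm{dist}\!\big(x+tf(x),\mathcal{S}\big)\le\|x+tf(x)-x(t)\|=o(t)$; dividing by $t$ and letting $t\to 0_+$ gives $f(x)\in\mathcal{T}_\mathcal{S}(x)$ directly from Definition \ref{intro:def-TanCone}.

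For sufficiency the plan is to track the squared distance $V(t)=\tfrac12\,\mathrm{dist}\!\big(x(t),\mathcal{S}\big)^2$ along a trajectory with $x(0)\in\mathcal{S}$. I would first record two facts about the closed convex set $\mathcal{S}$: (a) the metric projection $\pi_\mathcal{S}$ is single-valued and nonexpansive and $z\mapsto\tfrac12\,\mathrm{dist}(z,\mathcal{S})^2$ is of class $C^1$ with gradient $z-\pi_\mathcal{S}(z)$ (Moreau); (b) for $p\in\partial\mathcal{S}$, the tangent cone of Definition \ref{intro:def-TanCone} is the closed convex cone $\overline{\bigcup_{\lambda>0}\lambda(\mathcal{S}-p)}$, whose polar is the normal cone $N_\mathcal{S}(p)$, so the hypothesis $f(p)\in\mathcal{T}_\mathcal{S}(p)$ is equivalent to $\langle v,f(p)\rangle\le 0$ for all $v\in N_\mathcal{S}(p)$. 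Since $x(\cdot)\in C^1$, the chain rule gives, with $p(t)=\pi_\mathcal{S}(x(t))$,
\[
\dot V(t)=\big\langle x(t)-p(t),\,f(x(t))\big\rangle=\big\langle x(t)-p(t),\,f(p(t))\big\rangle+\big\langle x(t)-p(t),\,f(x(t))-f(p(t))\big\rangle .
\]
On $\{t:x(t)\notin\mathcal{S}\}$ the projection inequality yields $x(t)-p(t)\in N_\mathcal{S}(p(t))$ with $p(t)\in\partial\mathcal{S}$, so (b) makes the first term nonpositive, while Cauchy--Schwarz bounds the second by $\sqrt{2V(t)}\,\|f(x(t))-f(p(t))\|$. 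On any bounded interval $[0,T]$ the trajectory and its projection stay in a compact set; if $f$ is (locally) Lipschitz with constant $L$ there, then $\dot V\le 2LV$ with $V(0)=0$, and Gronwall's inequality forces $V\equiv 0$, i.e.\ $x(t)\in\mathcal{S}$ for all $t\ge 0$.

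\emph{Main obstacle.} The delicate point is that $f$ is only assumed continuous (with unique solvability), so no Lipschitz constant need exist and Gronwall does not apply verbatim. I would close this in one of two ways: either (i) replace the Lipschitz bound by a modulus-of-continuity bound $\|f(x(t))-f(p(t))\|\le\omega(\sqrt{2V(t)})$ and run a one-sided comparison of $V$ with the scalar Cauchy problem $u'=\sqrt{2u}\,\omega(\sqrt{2u})$, $u(0)=0$, which forces $V\equiv0$ under an Osgood-type condition on $\omega$ but needs care in general since Peano uniqueness fails; or (ii) build approximate solutions (Euler polygons whose nodes are pushed back into $\mathcal{S}$), show their distance to $\mathcal{S}$ is $o(1)$, and pass to the limit using the assumed uniqueness of the solution of $\dot x=f(x)$. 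I would also note that in every use of this theorem in the present paper $f(x)=Ax$ is globally Lipschitz, so the clean Gronwall argument above is already fully rigorous for the linear systems $(\ref{dyna2})$ and $(\ref{dyna1})$; only the stated generality needs the extra work. Finally, fact (b) --- the identification of the contingent cone of Definition \ref{intro:def-TanCone} with the polar of the normal cone for convex $\mathcal{S}$ --- is worth isolating as a short lemma, since it is precisely what turns the geometric hypothesis into the sign condition used above.
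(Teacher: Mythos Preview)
The paper does not prove Theorem~\ref{nagumo}: it is stated as a classical result with citations to Nagumo \cite{nagu} and Blanchini \cite{Blanchini}, followed only by an informal geometric interpretation (``for any trajectory that starts in $\mathcal{S}$, it has to go through $\partial\mathcal{S}$ if it will go out of $\mathcal{S}$\ldots''). There is therefore no paper proof to compare against; the authors treat this as background and use it as a black box in Lemmas~\ref{lemmap1}, \ref{lemmap15} and \ref{lemma4}.

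As for your proposal on its own merits: the necessity argument is standard and correct, and the sufficiency argument via $V(t)=\tfrac12\,\mathrm{dist}(x(t),\mathcal{S})^2$ together with the identification $\mathcal{T}_\mathcal{S}(p)=N_\mathcal{S}(p)^\circ$ is one of the canonical routes. You correctly isolate the only real difficulty, namely that mere continuity of $f$ blocks the Gronwall step. Your option~(i) is not a fix in general (an Osgood condition on the modulus of continuity is an extra hypothesis, not a consequence of continuity), so the weight has to fall on option~(ii). That route is right in spirit: construct Euler polygons with nodes projected back onto $\mathcal{S}$, use the tangent-cone hypothesis to show the projection step costs only $o(\Delta t)$ per step, obtain a family of approximate trajectories lying in $\mathcal{S}$, extract a uniform limit by Arzel\`a--Ascoli, and then invoke the \emph{assumed} global uniqueness to identify the limit with $x(\cdot)$. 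This is essentially the viability-theory proof (Aubin--Cellina, or the version in Blanchini's survey), and it is where the uniqueness hypothesis in the statement actually earns its keep. Your closing remark that $f(x)=Ax$ is globally Lipschitz, so the Gronwall argument is already rigorous for every application in this paper, is exactly the pragmatic observation the authors implicitly rely on.
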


Nagumo's Theorem \ref{nagumo} has an intuitive geometrical interpretation as follows: for any trajectory that starts in $\mathcal{S}$,
it has to go through $\partial\mathcal{S}$ if it  goes out of $\mathcal{S}$. Then one needs only to consider the property of this trajectory on
$\partial\mathcal{S}$. Note that $f(x)$ is the derivative of the
trajectory, thus (\ref{cond3}) ensures that the trajectory will
point inside $\mathcal{S}$ on the boundary, which means $\mathcal{S}$ is an invariant set. The
disadvantage of Theorem \ref{nagumo}, however, is that it may be
difficult to verify whether (\ref{cond3}) holds for  all
points on the boundary of a given set.
According to Nagumo's Theorem \ref{nagumo}, the key is to derive the formula of the tangent cone on the boundary of the set. An intuitive interpretation is given in the right side subfigure of Figure \ref{fig1}.

We use Euler methods to discretize  continuous system (\ref{dyna1}) to derive a discrete system, because  for sufficiently small step size they  preserve the invariance of a set, i.e., a set, which is an invariant set for a continuous system, is also an invariant set for the corresponding derived discrete system. Here we formally present these results as follows. The first statement can be found in \cite{blan6, Blanchini, blannew, song2}, and the second statement can be found in \cite{song2}.
\begin{theorem}\label{songthm}
Assume a polyhedron $\mathcal{P}$, polyhedral cone $\mathcal{C_P}$, ellipsoid $\mathcal{E}$ or Lorenz cone $\mathcal{C_L}$  is an invariant set for the continuous system (\ref{dyna1}). Then 
\begin{itemize}
\item there exists a $\hat\tau>0$, such that  $\mathcal{P}$ (or $\mathcal{C_P}$)  is also an invariant set for the discrete system $x_{k+1}=(I+A\Delta t)x_k$ for all $0\leq \Delta t\leq \hat\tau$, and 
\item there exists a $\tilde\tau>0$, such that $\mathcal{P}$ ($\mathcal{C_P}, \mathcal{E}$ or $\mathcal{C_L}$) is also an invariant set for the discrete system $x_{k+1}=(I-A\Delta t)^{-1}x_k$ for all $0\leq \Delta t\leq \tilde\tau$.
\end{itemize}
\end{theorem}
\begin{remark}
The first statement in Theorem \ref{songthm} means that the forward Euler method preserves the invariance of polyhedral set, while the second statement means that the backward Euler method preserves the invariance of polyhedral set, ellipsoid, and Lorenz cone. 
\end{remark}


\section{Invariance Conditions}\label{sec:invcond}
In this section, we present the invariance conditions, i.e., sufficient and necessary conditions under which  polyhedral sets, ellipsoids, and Lorenz cones are  invariant sets for discrete and continuous systems. For each convex set, the invariance conditions for discrete systems are first derived by  using the Theorems of Alternatives, i.e., the Farkas lemma or the $S$-lemma. Then the invariance conditions for continuous systems are derived by using a discretization method to discretize the continuous system and applying the invariance conditions for the obtained discrete systems.

\subsection{Polyhedral Sets}
Since every polyhedral set has two different representations  as shown in Section \ref{conset}, we present the invariance conditions for both forms, respectively.  \emph{Nonnegative} and \emph{essentially nonnegative matrices} are used in the invariance conditions. 
\begin{definition}
 A matrix $H$ is called a nonnegative  matrix, denoted by $H\geq0$, if $H_{ij}\geq0$ for all $i,j$. A matrix $L$ is called an essentially nonnegative matrix\footnote{An essentially nonnegative matrix, see e.g., \cite{caste}, is also called Metzler matrix or quasipositive matrix, see, e.g., \cite{berman}. },
denoted by $L{\geq}_o 0$,  if $L_{ij}\geq0$ for $i\neq j$.
\end{definition}

\subsubsection{Invariance Conditions for Discrete Systems}
The invariance condition of a polyhedral sets given as in (\ref{poly1}) for
a discrete system is presented in Theorem \ref{polythm}. The study of invariance condition of polyhedral sets for discrete system can be traced back to Bitsoris in \cite{bits1, bits2}, which consider a special class of polyhedral sets that is symmetric with respect to the origin.  We give a more straightforward proof here by using the Farkas lemma for the polyhedral set in the form of (\ref{poly1}). It was brought to our attention recently that the result is the same as the one presented by Hennet \cite{hennet}, which also uses the Farkas lemma. To keep the integration of the paper, we also present the proof explicitly. 

\begin{theorem}\label{polythm}
\textbf{\emph{(Hennet \cite{hennet})}} A polyhedron $\mathcal{P}$ given as in (\ref{poly1}) is an invariant set for the discrete
system (\ref{dyna2}) if and only if \footnote{The referee proposes an easy way to show the ``if" part: let $x\in \mathcal{P},$ i.e., $Gx\leq b$. Since $H\geq0, HG=GB_k$ and $Hb\leq b$, we have $GB_kx=HGx\leq Hb\leq b$, i.e., $B_kx\in \mathcal{P}$.} there exists a matrix $H\in \mathbb{R}^{m\times m}$,
such that $H\geq0, HG=GB_k$ and $ Hb\leq b.$
\end{theorem}

\begin{proof}
We have that $\mathcal{P}$ is an invariant set for the discrete system (\ref{dyna2}) if and only if $B_k\mathcal{P}\subseteq\mathcal{P},$ which is the same as $\mathcal{P}\subseteq\mathcal{P}'=\{x\;|\;GB_kx\leq b\}.$ Note that $\mathcal{P}\subseteq\mathcal{P}'$ if and only if for every $i\in \mathcal{I}(m)$, we have
$$\{x\,|\,Gx\leq b\}\cap \{x\,|\,(GB_k)_i^Tx>b_i\}=\emptyset,$$
i.e., the inequality system $Gx\leq b$ and $(GB_k)_i^Tx>b_i$ has no solution.  According to the Farkas lemma \ref{farkas2}, this is equivalent to that there exists a vector $h_i\geq0$, such
that $G^Th_i=(GB_k)_i,$ and $b^Th_i\leq b_i. $ We let $H=[h_1,h_2,...,h_m],$ then we have $H\geq0, HG=GB_k$ and $Hb\leq  b$. The proof is complete. 
\end{proof}

We highlight that Castelan and
Hennet \cite{caste} present an algebraic characterization of  the
matrix $G$ satisfying the conditions in  Theorem \ref{polythm}. They prove that given $B_k$ and $G$, there exists a matrix $H$ satisfying $HG=GB_k$ if and only if  the
kernel of $G$ is an $B_k$-invariant subspace.

The invariance condition of a polyhedral set given as in (\ref{poly2}) for discrete systems is provided in Theorem
\ref{polythm2}. Note that a similar result is presented in \cite{Blanchini}, which
considers only the case when the set is a polytope. Invariance condition of  a polytope is presented in \cite{Blanchini}, while invariance condition of a polyhedral cone is presented in \cite{tiwa}. Here we integrate these two results in one theorem. 

\begin{theorem}\label{polythm2}
A polyhedron $\mathcal{P}$ given as in (\ref{poly2}) is an
invariant set for the discrete system (\ref{dyna2}) if and only if
there exists a  matrix $L\in
\mathbb{R}^{(\ell_1+\ell_2)\times (\ell_1+\ell_2)}$, such that $L\geq0, XL=B_kX$ and $ \bar{1}^TL=
\bar{1}^T,$ where
$X=[x^1,...,x^{\ell_1},\hat{x}^1,...,\hat{x}^{\ell_2}]$,
$\bar{1}^T=(1_{\ell_1}^T,0_{\ell_2}^T)$.
\end{theorem}

\begin{proof} 
Note that $\mathcal{P}$ given as in (\ref{poly2}) is an
invariant set for the discrete system if and only if $B_kx^i\in
\mathcal{P}, $ for all $i\in
\mathcal{I}(\ell_1)$, and $B_k(O^+\mathcal{P})\subseteq O^+\mathcal{P}$, where $O^+\mathcal{P}$ denotes the recession cone of $\mathcal{P}$. Clearly, $B_kx^i\in
\mathcal{P} $ for all $i\in
\mathcal{I}(\ell_1)$ is equivalent to that there exist ${\theta}_{p_1}^{i}, \hat{\theta}_{p_2}^{i}\geq0$, $p_1\in\mathcal{I}(\ell_1), p_2\in \mathcal{I}(\ell_2),$ with $\sum_{p_1=1}^{\ell_1}\theta_{p_1}^{i}=1$, such that 
$B_kx^i=\sum_{p_1=1}^{\ell_1}\theta_{p_1}^{i}x^{p_1}+\sum_{p_2=1}^{\ell_2}\hat{\theta}_{p_2}^{i}\hat{x}^{p_2}.$
Since  $O^+\mathcal{P}$ is generated by  $\hat x^j$, where $j\in \mathcal{I}(\ell_2)$, we have that $B_k(O^+\mathcal{P})\subseteq O^+\mathcal{P}$ can be rewritten as  
$ B_k\hat{x}^j\in O^+\mathcal{P},$ for all  $j\in \mathcal{I}(\ell_2)$. Then $B_k(O^+\mathcal{P})\subseteq O^+\mathcal{P}$ is equivalent to that there exist 
 ${\theta}_{p_2}^{j}\geq0, p_2\in\mathcal{I}(\ell_2), $  such that 
$B_k\hat{x}^j=\sum_{p_2=1}^{\ell_2}\hat{\theta}_{p_2}^{j}\hat{x}^{p_2}.$
Let  $L=[\theta^{1},..,.\theta^{\ell_1},
\hat\theta^{1},...,\hat\theta^{\ell_2}]$, then the theorem is immediate. 
\end{proof}

A polyhedral cone is a special polyhedral set, thus
we have the following  invariance condition of a polyhedral cone for discrete
systems.

\begin{corollary}\label{polythmcon4}
1). A polyhedral cone $\mathcal{C_P}$ given as in (\ref{polycone1})
is an invariant set for the discrete  system (\ref{dyna2}) if
and only if there exists a  matrix $H\in \mathbb{R}^{m\times m}$,  such that $H\geq0$ and $HG=GB_k$. 

2). A
polyhedral cone $\mathcal{C_P}$ given as in (\ref{polycone2}) is an invariant
set for the discrete system (\ref{dyna2}) if and only if there
exists a matrix $L\in
\mathbb{R}^{\ell\times \ell}$, such that $L\geq0$ and $XL=B_kX$, where
$X=[\hat{x}^1,...,\hat{x}^{\ell}]$.
\end{corollary}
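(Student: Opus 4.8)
The plan is to derive both parts of Corollary~\ref{polythmcon4} as specializations of Theorems~\ref{polythm} and~\ref{polythm2}, using the fact that a polyhedral cone is exactly a polyhedron with $b=0$ in the $H$-representation~(\ref{polycone1}) and a polyhedron with no vertices, i.e.\ $\ell_1=0$, in the $V$-representation~(\ref{polycone2}). For part~1), I would observe that $\mathcal{C_P}=\{x\mid Gx\leq 0\}$ is a polyhedron of the form~(\ref{poly1}) with $b=0$. Applying Theorem~\ref{polythm} directly, $\mathcal{C_P}$ is invariant for~(\ref{dyna2}) if and only if there is a nonnegative $H\in\mathbb{R}^{m\times m}$ with $HG=GA$ and $Hb\leq b$; since $b=0$, the second condition reads $H\cdot 0\leq 0$, which holds automatically for every $H$. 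Hence only $HG=GA$ survives, which is the claim. Alternatively, one can re-run the short Farkas-lemma argument from the proof of Theorem~\ref{polythm}: invariance is equivalent to $\mathcal{C_P}\subseteq\{x\mid GAx\leq 0\}$, i.e.\ the system $Gx\leq 0$, $(GA)_i^Tx>0$ is infeasible for each $i$, and the Farkas lemma (Theorem~\ref{farkas2}) yields $h_i\geq 0$ with $G^Th_i=(GA)_i$; stacking the $h_i$ as rows of $H$ gives $HG=GA$, $H\geq 0$.

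For part~2), I would note that $\mathcal{C_P}$ given by~(\ref{polycone2}) is the polyhedron~(\ref{poly2}) with $\ell_1=0$, so $X=[\hat x^1,\dots,\hat x^\ell]$ and the vectors $\tilde 1,\bar 1$ degenerate: $\tilde 1=(1_{\ell_1},0_{\ell_2})$ becomes the zero vector $0_\ell$ and $\bar 1=1_{\ell_1+\ell_2}$ becomes $1_\ell$, so the condition $\tilde 1 L=\bar 1$ reads $0=1_\ell$, which is never satisfiable — meaning the literal specialization of Theorem~\ref{polythm2} is vacuous and must be replaced by a direct argument. The clean route is: by Proposition~\ref{prop1}, $\mathcal{C_P}$ is invariant iff $A\hat x^j\in\mathcal{C_P}$ for each $j\in\mathcal{I}(\ell)$, i.e.\ each $A\hat x^j$ is a nonnegative combination of $\hat x^1,\dots,\hat x^\ell$; collecting the coefficient vectors as columns of $L\in\mathbb{R}^{\ell\times\ell}$ gives $AX=XL$ with $L\geq 0$, and conversely such an $L$ exhibits each $A\hat x^j$ inside $\mathcal{C_P}$. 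The simplification relative to Theorem~\ref{polythm2} is precisely that cones are closed under nonnegative scaling, so there is no normalization constraint on the combination coefficients, and the ``$\tilde 1L=\bar 1$'' bookkeeping simply drops out.

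The main thing to be careful about — the only real obstacle — is the degeneracy of Theorem~\ref{polythm2} in the conic case: one cannot mechanically set $\ell_1=0$ in the statement, because $\tilde 1 L=\bar 1$ then becomes unsatisfiable, so part~2) genuinely requires the short standalone derivation via Proposition~\ref{prop1} rather than a one-line appeal to the previous theorem. (Part~1), by contrast, really is an immediate corollary of Theorem~\ref{polythm}.) A secondary point worth a sentence is that the equivalences in Proposition~\ref{prop1} and the Farkas lemma hold with no boundedness or full-dimensionality hypotheses on $\mathcal{C_P}$, so nothing extra is needed; the representations~(\ref{polycone1}) and~(\ref{polycone2}) each describe a general (possibly non-pointed, possibly lower-dimensional) polyhedral cone, and the proof goes through verbatim in all cases.
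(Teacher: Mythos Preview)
Your proposal is correct, and in fact more careful than what the paper itself does: the paper provides no proof at all for Corollary~\ref{polythmcon4}, merely prefacing it with the remark that a polyhedral cone is a special polyhedral set, and so the corollary is treated as an immediate specialization of Theorems~\ref{polythm} and~\ref{polythm2}. Your treatment of part~1) matches this exactly.

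Your handling of part~2) goes beyond the paper. You correctly observe that the literal substitution $\ell_1=0$ into Theorem~\ref{polythm2} produces the unsatisfiable constraint $0=1_\ell$, so the corollary cannot be read off mechanically; you then supply the short direct argument via Proposition~\ref{prop1} (invariance iff each $A\hat{x}^j$ lies in the cone, i.e.\ is a nonnegative combination of the generators). The paper does not flag this degeneracy at all and simply asserts the result. What your approach buys is a genuinely self-contained justification; what the paper's (non-)approach buys is brevity at the cost of leaving the reader to notice and repair the gap themselves.
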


For a given polyhedral set and
a discrete system, according to Theorem
\ref{polythm} (Theorem \ref{polythm2}, or Corollary
\ref{polythmcon4}), to determine  whether the set is an invariant set for
the system is equivalent to verify the existence of a nonnegative
matrix $H$ (or $L$), which is actually a linear optimization
problem. Rather than computing $H$ (or $L$) directly, it is more
efficient to sequentially solve some small subproblems. Let
us choose polyhedron $\mathcal{P}$ as given in (\ref{poly1}) and
Theorem \ref{polythm} as an example to illustrate this idea. We can
sequentially examine the feasibility of the subproblems. Find $h_i\in \mathbb{R}^n$, such that 
$h_i^TG=G_i^TB_k, \, h_i\geq0,$ and $ h_i^Tb\leq b_i$, for all $i\in\mathcal{I}(n)$. Clearly, these are linear feasibility problems which can be considered as a special case of  linear optimization problems, see, e.g., \cite{hill}. A linear optimization problem can be solved in polynomial time, e.g., by using interior point methods \cite{roos}. If all of these linear optimization problems  are
feasible, then their solutions forms such a nonnegative matrix $H$.
Otherwise, we can conclude that the set is not an invariant set for the system, and  the
computation is terminated at the first infeasible subproblem.

\subsubsection{Invariance Conditions for Continuous Systems}

According to \cite{song2}, we have that both the forward and backward Euler methods are invariance preserving for a polyhedral set. Blanchini
\cite{blan6,Blanchini} presents  the connection between invariant sets for continuous and discrete systems by using the forward Euler method. The discrete system obtained by using the forward Euler method is refereed to as Euler Approximating System \cite{blan6,Blanchini}. 
We first present the following invariance condition which is obtained by using Nagumo's Theorem \ref{nagumo}. For $x\in \mathcal{P},$ let $\mathcal{I}_x$ denote the set of indices of the constraints which are active at $x$, i.e., the corresponding linear inequality holds as equality at $x$. Clearly, we have $x\in \partial \mathcal{P}$ if and only if $\mathcal{I}_x\neq\emptyset.$

\begin{lemma}\label{lemmap1}
Let a polyhedron $\mathcal{P}$ be given as in (\ref{poly1}), and $\mathcal{I}_x\neq \emptyset$ for all $x\in \mathcal{P}$. Then $\mathcal{P}$ is an invariant set for the continuous system (\ref{dyna1}) if and
only if for every $x\in \partial{\mathcal{P}}$, i.e.,  $G_{i}^Tx= b_{i}$,  we have
\begin{equation}
G_{i}^TAx\leq 0, ~~~\text{ for all } i\in\mathcal{I}_x.
\end{equation}
\end{lemma}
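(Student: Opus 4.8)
The plan is to invoke Nagumo's Theorem \ref{nagumo} with $f(x)=Ax$, which is continuous and admits globally unique solutions (the exponential $e^{At}x(0)$), so the only task is to compute the tangent cone $\mathcal{T}_{\mathcal{P}}(x)$ at an arbitrary boundary point $x$ and translate the condition $Ax\in\mathcal{T}_{\mathcal{P}}(x)$ into the stated linear inequalities. For a polyhedron $\mathcal{P}=\{x\mid Gx\le b\}$ and $x\in\partial\mathcal{P}$ with active set $\mathcal{I}_x=\{i\mid G_i^Tx=b_i\}\neq\emptyset$, the standard fact is that
\begin{equation}
\mathcal{T}_{\mathcal{P}}(x)=\{y\in\mathbb{R}^n\mid G_i^Ty\le 0,\ i\in\mathcal{I}_x\}.
\end{equation}
First I would establish this characterization: the inclusion $\subseteq$ follows because for $y\in\mathcal{T}_{\mathcal{P}}(x)$ there are $t_k\downarrow 0$ and points $s_k\in\mathcal{P}$ with $\|x+t_ky-s_k\|=o(t_k)$, and evaluating the active constraints $G_i^T s_k\le b_i=G_i^Tx$ gives $t_kG_i^Ty\le G_i^T(s_k-x-t_ky)+ t_kG_i^Ty \le o(t_k)$... more carefully, $G_i^Ty = \tfrac{1}{t_k}G_i^T(s_k-x) + \tfrac{1}{t_k}G_i^T(x+t_ky-s_k)\le 0 + o(1)$, so $G_i^Ty\le 0$. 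For the reverse inclusion $\supseteq$, given $y$ with $G_i^Ty\le 0$ for $i\in\mathcal{I}_x$, I would show $x+ty\in\mathcal{P}$ for all sufficiently small $t>0$: the active constraints stay satisfied since $G_i^T(x+ty)=b_i+tG_i^Ty\le b_i$, and the inactive constraints $G_j^Tx<b_j$ remain satisfied for small $t$ by continuity; then $\mathrm{dist}(x+ty,\mathcal{P})=0$ for such $t$, so the liminf in (\ref{tancon}) is $0$.

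Once the tangent cone formula is in hand, Nagumo's Theorem \ref{nagumo} says $\mathcal{P}$ is invariant for (\ref{dyna1}) if and only if $Ax\in\mathcal{T}_{\mathcal{P}}(x)$ for every $x\in\partial\mathcal{P}$, which by the formula above is exactly $G_i^TAx\le 0$ for all $i\in\mathcal{I}_x$, for every $x\in\partial\mathcal{P}$. This is precisely the statement of the lemma, so no further work is needed beyond carefully quoting Nagumo and substituting.

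The main obstacle — really the only nontrivial point — is the proof of the tangent cone formula, and within that, the $\subseteq$ direction requires a little care: from a near-feasible sequence $s_k$ one must extract the inequality on each active row while controlling the $o(t_k)$ error term via Cauchy--Schwarz, $|G_i^T(x+t_ky-s_k)|\le\|G_i\|\cdot\mathrm{dist}(x+t_ky,\mathcal{P})$ (after replacing $s_k$ by a nearest point, which exists since $\mathcal{P}$ is closed). I would also remark that the case $\mathcal{I}_x=\emptyset$ (interior points) is vacuously handled since the condition (\ref{cond3}) in Nagumo's theorem is only imposed on $\partial\mathcal{P}$, and a boundary point of a polyhedron always has a nonempty active set, so the hypothesis $\mathcal{I}_x\neq\emptyset$ is automatic for $x\in\partial\mathcal{P}$. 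Everything else is a direct substitution, so the proof is short.
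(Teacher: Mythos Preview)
Your proposal is correct and follows essentially the same approach as the paper: compute the tangent cone of the polyhedron at a boundary point and then apply Nagumo's Theorem \ref{nagumo}. The only difference is that the paper simply quotes the tangent-cone formula $\mathcal{T}_{\mathcal{P}}(x)=\{y\mid G_i^Ty\le 0,\ i\in\mathcal{I}_x\}$ from a reference, whereas you supply a self-contained proof of both inclusions.
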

\begin{proof}
For all $x\in \partial \mathcal{P}$, the tangent cone at $x$ is 
$\mathcal{T}_{\mathcal{P}}(x)=\{y\,|\,G_{i}^Ty\leq 0, i\in\mathcal{I}_x\}$ for all $i\in \mathcal{I}_x$ (see  \cite[p.138]{hul}). Then the lemma immediately follows from
Nagumo's Theorem \ref{nagumo}.
\end{proof}

We now present another invariance condition of a polyhedron  in the form of (\ref{poly1}) for the continuous system (\ref{dyna1}). The following theorem also refers to Castelan and Hennet \cite[Proposition 1]{caste}.

\begin{theorem}\label{polythmcon} 
A polyhedron $\mathcal{P}$ given as in (\ref{poly1}) is an
invariant set for the continuous  system (\ref{dyna1}) if and only
if there exists a  matrix $\tilde{H}\in \mathbb{R}^{m\times m}$, such that
$\tilde{H}\geq_o0, \tilde{H}G=GA$ and $ \tilde{H}b\leq 0.$
\end{theorem}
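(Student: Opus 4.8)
The plan is to derive the continuous-time condition from the discrete-time condition of Theorem \ref{polythm} via the backward Euler discretization, in the spirit of the framework announced in the introduction and using the results of \cite{song2}. First I would apply the backward Euler method to \eqref{dyna1}: the discretized system is $x_{k+1} = (I-hA)^{-1}x_k$ for a steplength $h>0$ small enough that $I-hA$ is invertible. By \cite{song2}, the backward Euler method is invariance preserving for a polyhedron, so there is a uniform $h^*>0$ such that for every $0<h<h^*$, the polyhedron $\mathcal{P}$ is invariant for \eqref{dyna1} if and only if it is invariant for the discrete system $x_{k+1}=(I-hA)^{-1}x_k$.

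Next I would invoke Theorem \ref{polythm} with the matrix $(I-hA)^{-1}$ in place of $A$: $\mathcal{P}$ given as in \eqref{poly1} is invariant for the backward Euler system if and only if there exists a nonnegative matrix $H_h\in\mathbb{R}^{m\times m}$ with $H_hG = G(I-hA)^{-1}$ and $H_hb\leq b$. The core algebraic step is to rewrite this. From $H_hG=G(I-hA)^{-1}$ I would multiply on the right by $(I-hA)$ to get $H_h G(I-hA)=G$, i.e. $H_hG - hH_hGA = G$, and then substitute $H_hG=G(I-hA)^{-1}$ is awkward; instead I would set $\tilde H := \tfrac{1}{h}(H_h - I)$ and show that $H_hG=G(I-hA)^{-1}$ is equivalent to $\tilde H G = GA$. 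Indeed $H_hG=G(I-hA)^{-1}$ is equivalent to $H_h G (I-hA)=G$, hence $H_hG - hH_hGA=G$; using $H_hG - G = h\tilde HG$ this becomes $h\tilde HG = hH_hGA$, and since $H_h=I+h\tilde H$ we get $\tilde HG = GA + h\tilde H GA$. This still carries an $h$-term, so more care is needed: the cleaner route is $H_hG=G(I-hA)^{-1}\iff H_h G(I-hA)=G \iff (I+h\tilde H)(G-hGA)=G \iff hG\tilde H\text{-free terms}$... — concretely, expanding $(I+h\tilde H)(G - hGA)=G$ gives $G - hGA + h\tilde H G - h^2\tilde H GA = G$, i.e. $h(\tilde H G - GA) = h^2 \tilde H GA$, i.e. $\tilde H G - GA = h\tilde H G A$. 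So one does not get $\tilde HG=GA$ exactly for fixed $h$; rather one should argue that since the equivalence of invariance holds for all sufficiently small $h$, one may pass to the limit $h\to 0_+$, along which $H_h\to I$ and $\tilde H$ stays in a suitable compact set, to obtain a limiting matrix $\tilde H$ with $\tilde H G = GA$; the inequality $H_hb\leq b$ becomes $(I+h\tilde H)b\leq b$, i.e. $h\tilde H b\leq 0$, i.e. $\tilde H b\leq 0$; and the off-diagonal nonnegativity of $\tilde H=\tfrac1h(H_h-I)$ follows since the off-diagonal entries of $H_h$ are nonnegative while only the diagonal is perturbed by $-I$. Conversely, given $\tilde H$ with $\tilde H G=GA$ and $\tilde H b\leq 0$ and $\tilde H\geq_o 0$, one checks that for small enough $h$ the matrix $H_h:=(I-h\tilde H)^{-1}$ (or a Neumann-series truncation) is nonnegative and satisfies the discrete condition for the backward Euler system, hence $\mathcal{P}$ is invariant for \eqref{dyna1}.

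The key steps, in order, are: (1) set up the backward Euler discretization and cite \cite{song2} for the equivalence of invariance; (2) translate via Theorem \ref{polythm} into existence of a nonnegative $H_h$; (3) perform the substitution $\tilde H=\tfrac1h(H_h-I)$ and the limiting/uniformity argument to land on $\tilde H G = GA$, $\tilde H b\leq 0$, $\tilde H\geq_o 0$; (4) prove the converse by constructing a valid $H_h$ from $\tilde H$ for small $h$. I expect the main obstacle to be step (3): making the passage from "for each small $h$ there is a nonnegative $H_h$" to "there is a single off-diagonal-nonnegative $\tilde H$ with the exact identities $\tilde H G=GA$, $\tilde H b\leq 0$" rigorous — one must control boundedness of the family $\{\tilde H\}$ (e.g. restricting attention to solutions of the linear feasibility subproblems $h_i G = G_i (I-hA)^{-1}$ described after Corollary \ref{polythmcon4}, which have bounded solutions as $h\to0$) and ensure the limit matrix retains off-diagonal nonnegativity and satisfies the equalities without leftover $O(h)$ terms. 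An alternative that sidesteps the limiting argument is to prove Theorem \ref{polythmcon} directly from Lemma \ref{lemmap1}: for $x\in\partial\mathcal{P}$ with active set $\mathcal{I}_x$, the condition $G_i^TAx\leq 0$ for $i\in\mathcal{I}_x$ holds for all such $x$ iff, by a Farkas-lemma argument applied face by face (the vectors $x$ with $G_i^Tx=b_i,\,i\in\mathcal{I}_x$, $G_j^Tx\le b_j$ otherwise, form a face of $\mathcal{P}$), each row $G_i^TA$ can be written as $\sum_{j\in\mathcal{I}_x}\tilde h_{ij}G_j^T$ with $\tilde h_{ij}\ge 0$ for $j\ne i$ together with a sign condition coming from $b$, which assembles into $\tilde H G = GA$, $\tilde H b\le 0$, $\tilde H\geq_o0$; I would present whichever of these two routes is cleaner, likely leading with the Euler-method route to stay consistent with the paper's stated unifying methodology.
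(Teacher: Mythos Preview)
Your ``only if'' direction matches the paper's proof exactly: apply the backward Euler discretization, invoke \cite{song2} to transfer invariance to the discrete system $x_{k+1}=(I-\Delta t\,A)^{-1}x_k$, apply Theorem~\ref{polythm} to obtain $H(\Delta t)\ge 0$ with $H(\Delta t)G=G(I-\Delta t\,A)^{-1}$ and $H(\Delta t)b\le b$, set $\tilde H=\tfrac{1}{\Delta t}(H(\Delta t)-I)$, and pass to the limit. The paper writes the key identity as $\tfrac{H(\Delta t)-I}{\Delta t}\,G=H(\Delta t)GA$, which is your relation $\tilde H G-GA=h\tilde H GA$ rearranged, and then lets $\Delta t\to 0$. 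Your worry about boundedness of the family $\{\tilde H\}$ is well placed: the paper simply asserts $H(\Delta t)\to I$ and that the limit $\tilde H$ exists because the right-hand side $H(\Delta t)GA$ converges to $GA$; strictly speaking this pins down $\tilde H G$ rather than $\tilde H$ when $\mathrm{rank}\,G<m$, so the paper glosses over precisely the point you flagged.

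For the ``if'' direction the paper does \emph{not} go back through Euler. It argues directly from Lemma~\ref{lemmap1} (i.e.\ Nagumo): given $\tilde H\ge_o 0$, $\tilde H G=GA$, $\tilde H b\le 0$, and $x\in\partial\mathcal{P}$ with $i\in\mathcal{I}_x$, one computes $G_i^TAx=\tilde H_i^TGx\le \tilde H_i^Tb\le 0$, using that $\tilde H_{ij}\ge 0$ for $j\ne i$ together with $G_j^Tx\le b_j$ and $G_i^Tx=b_i$. This is exactly the ``alternative'' you sketch at the end. So the paper takes a hybrid route---Nagumo for sufficiency, Euler plus a limit for necessity---whereas you propose Euler for both directions. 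Your Euler-based sufficiency via $H_h=(I-h\tilde H)^{-1}$ would also work (for small $h$ the matrix $I-h\tilde H$ is an $M$-matrix so $H_h\ge 0$; the identity $H_hG(I-hA)=G$ follows from $\tilde H G=GA$; and $H_hb-b=hH_h\tilde H b\le 0$), but it needs the converse implication ``discrete invariance for all small $h$ $\Rightarrow$ continuous invariance,'' which the paper neither states nor uses. The paper's direct Nagumo argument is shorter and avoids that extra appeal.
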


\begin{proof}  
We first consider the ``if" part. 
Noting that  $\tilde{H}G=GA$, we have $\tilde{H}_{i}^TGx=G_{i}^TAx,$ for every $i\in\mathcal{I}(n)$. 
Since $\tilde{H}{\geq}_o0$ and
$x\in \partial\mathcal{P}$, 
\begin{equation}\label{revis1}
\begin{array}{rl}
   \text{when }  j=i , &\text{we have } \tilde{h}_{ii}\in \mathbb{R}  \text{ and } G_i^Tx=b_i,\\
    \text{when } j\neq i, & \text{we have }\tilde{h}_{ij}\geq0\text{ and }  G_j^Tx\leq b_j,
\end{array}
\end{equation}
where $\tilde{h}_{ij}$ is the $(i,j)$-th entry of $\tilde{H}.$
According to (\ref{revis1}), we have 
$\sum_{j=1}^m\tilde{h}_{ij}(G_j^Tx-b_j)\leq0,$ i.e., $\tilde{H}_{i}^TGx\leq\tilde{H}_{i}^Tb$. Since $\tilde{H}b\leq0,$ we have
$\tilde{H}_{i}^Tb\leq0.$ Then, we have
$G_{i}^TAx=\tilde{H}_{i}^TGx\leq\tilde{H}_{i}^Tb\leq0.$
According to Lemma \ref{lemmap1}, we have that $\mathcal{P}$
is an invariant set for the continuous system.

Now we consider the ``only if" part. According to Theorem \ref{songthm}, we have that there
exists a $\hat\tau>0$, such that $\mathcal{P}$ is also an invariant
set for the discrete system $ x_{k+1}=(I+A\Delta t)x_k,$ for
every $ 0\leq \Delta t\leq\hat\tau. $  Then, according to Theorem
\ref{polythm}, there exists a matrix $H(\Delta t)\geq0$, such that $H(\Delta t) G=G(I+A\Delta t), \text{ and } H(\Delta t) b\leq b$, i.e., 
\begin{equation}\label{t1}
\frac{H(\Delta t)-I}{\Delta t} G=GA, \text{ and } \frac{H(\Delta t)-I}{\Delta t}b\leq 0.
\end{equation}
Clearly $\tilde{H}=\frac{H(\Delta t)-I}{\Delta t}$ for $\Delta t>0$ satisfies this theorem.  
\end{proof}

We consider the invariance condition of the polyhedron in the form of (\ref{poly2}) for the continuous system (\ref{dyna1}). For an arbitrary convex set in $\mathbb{R}^n$, we have the following conclusion\footnote{We thank the referee for proposing this simple and more transparent proof.}.

\begin{lemma}\label{lemma:general}
Let $\mathcal{S}$ be a convex set in $\mathbb{R}^n$. For any $\ell\in \mathbb{N}$ and   $x, y^1,y^2,...,y^\ell\in\mathcal{S}$ satisfying  $x=\sum_{i=1}^\ell\beta_iy^i,$ where $\sum_{i=1}^\ell \beta_i=1$ and $\beta_i>0$ for every $i\in \mathcal{I}(\ell),$   we have $\mathcal{T}_{\mathcal{S}}(y^i)\subseteq\mathcal{T}_{\mathcal{S}}(x)$ for every $i\in \mathcal{I}(\ell).$
\end{lemma}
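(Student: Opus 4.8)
The plan is to reduce the inclusion to a single tangent direction and to transport that direction from a point $y^i$ to $x$, using that $x$ is a convex combination of the $y^i$. Fix $i\in\mathcal{I}(\ell)$; after relabelling we may assume $i=1$, and write $x=\beta_1 y^1+v$ with $v:=\sum_{j=2}^{\ell}\beta_j y^j$ (so $v=0$ when $\ell=1$). Let $d\in\mathcal{T}_{\mathcal{S}}(y^1)$. By Definition \ref{intro:def-TanCone} there is a sequence $t_k\downarrow 0$ with $\operatorname{dist}(y^1+t_k d,\mathcal{S})/t_k\to 0$, and, since $\mathcal{S}$ is closed, we may choose $s_k\in\mathcal{S}$ attaining $\operatorname{dist}(y^1+t_k d,\mathcal{S})=\|s_k-(y^1+t_k d)\|$; thus $s_k-y^1=t_k(d+r_k)$ with $r_k\to 0$.

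Now set $\zeta_k:=\beta_1 s_k+v=\beta_1 s_k+\sum_{j=2}^{\ell}\beta_j y^j$. The weights $\beta_1,\dots,\beta_\ell$ are nonnegative and sum to one --- they are the coefficients of the convex representation $x=\sum_j\beta_j y^j$ --- so $\zeta_k$ is a convex combination of $s_k,y^2,\dots,y^\ell\in\mathcal{S}$, whence $\zeta_k\in\mathcal{S}$. Moreover
\begin{equation*}
\zeta_k-x=\beta_1(s_k-y^1)=\beta_1 t_k\,(d+r_k),
\end{equation*}
so that, with $\tau_k:=\beta_1 t_k\downarrow 0$, we have $\zeta_k=x+\tau_k d+\tau_k r_k$ and hence
\begin{equation*}
\frac{\operatorname{dist}(x+\tau_k d,\mathcal{S})}{\tau_k}\le\frac{\|x+\tau_k d-\zeta_k\|}{\tau_k}=\|r_k\|\longrightarrow 0 .
\end{equation*}
Therefore $\liminf_{\tau\downarrow 0}\operatorname{dist}(x+\tau d,\mathcal{S})/\tau=0$, that is, $d\in\mathcal{T}_{\mathcal{S}}(x)$. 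As $d$ was an arbitrary element of $\mathcal{T}_{\mathcal{S}}(y^1)$, we get $\mathcal{T}_{\mathcal{S}}(y^1)\subseteq\mathcal{T}_{\mathcal{S}}(x)$, and since $i\in\mathcal{I}(\ell)$ was arbitrary the lemma follows.

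There is also a more structural route. For a closed convex set $\mathcal{S}$ and $p\in\mathcal{S}$ one has $\mathcal{T}_{\mathcal{S}}(p)=\overline{\{\lambda(s-p):\lambda\ge 0,\ s\in\mathcal{S}\}}$, which is a closed convex cone. Granting this, it suffices to verify $\mathcal{S}-y^1\subseteq\mathcal{T}_{\mathcal{S}}(x)$; and for $s\in\mathcal{S}$ and $0<t<\beta_1$ the point
\begin{equation*}
x+t(s-y^1)=(\beta_1-t)\,y^1+\sum_{j=2}^{\ell}\beta_j y^j+t\,s
\end{equation*}
is again a convex combination of points of $\mathcal{S}$, so it lies in $\mathcal{S}$ and $\operatorname{dist}(x+t(s-y^1),\mathcal{S})=0$ for all small $t>0$, giving $s-y^1\in\mathcal{T}_{\mathcal{S}}(x)$. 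Since $\mathcal{T}_{\mathcal{S}}(x)$ is a closed convex cone, it then contains the smallest closed convex cone that contains $\mathcal{S}-y^1$, which is $\mathcal{T}_{\mathcal{S}}(y^1)$.

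The only genuine content is the membership $\zeta_k\in\mathcal{S}$ (equivalently, $x+t(s-y^1)\in\mathcal{S}$ for small $t$): this is where convexity of $\mathcal{S}$ enters, and it works precisely because the coefficients of $x=\sum_i\beta_i y^i$ are nonnegative and sum to one, so that replacing one $y^i$ by a nearby point of $\mathcal{S}$ --- possibly shifting a little of its weight onto that point --- keeps the combination convex. I expect this to be the main point to get right; the remainder is routine bookkeeping with the $\liminf$ in Definition \ref{intro:def-TanCone}.
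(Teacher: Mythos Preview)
Your first argument is essentially the paper's proof: both form the convex combination $\beta_i s+\sum_{j\ne i}\beta_j y^j\in\mathcal{S}$ (your $\zeta_k$, the paper's $\tilde s^{ik}$) to transport an approximation of $y^i+t d$ to one of $x+\beta_i t\,d$, yielding the distance comparison $\operatorname{dist}(x+\beta_i t\,d,\mathcal{S})\le \beta_i\operatorname{dist}(y^i+t d,\mathcal{S})$ and hence $\beta_i d\in\mathcal{T}_{\mathcal{S}}(x)$; the only cosmetic difference is that the paper writes this inequality for all $t>0$ while you extract a sequence $t_k\downarrow 0$.

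Your second route, via the identification $\mathcal{T}_{\mathcal{S}}(p)=\overline{\operatorname{cone}(\mathcal{S}-p)}$ and the observation that $x+t(s-y^1)=(\beta_1-t)y^1+\sum_{j\ge 2}\beta_j y^j+ts\in\mathcal{S}$ for $0<t<\beta_1$, is a genuinely different and cleaner argument: it bypasses the $\liminf$ bookkeeping entirely and makes the role of convexity transparent. The paper does not take this route. Both approaches rely (as you correctly flag) on $\sum_i\beta_i=1$, which is implicit in the lemma's intended use and in the paper's own proof.
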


\begin{proof} We denote cone$(x,\mathcal{S})=\{\alpha(y-x)\,|\,y\in \mathcal{S}, \alpha\geq0\}$, then we have that $\mathcal{T_S}(x)$ is the same as the topological closure of cone$(x,S)$. Let  $\Phi(x)$ denote the face of $\mathcal{S}$ generated by $x$, i.e., the set $\{y\in \mathcal{S}\,|\,\mu x+(1-\mu)y\in \mathcal{S} \text{ for some }\mu >1\}.$ We first show that for any $x,u\in\mathcal{S}$, if $u\in \Phi(x)$, then $\mathcal{T_S}(u)\subseteq\mathcal{T_S}(x)$. In fact, by definition of $\Phi(x)$ there exists $\mu>1$, such that $v:=\mu x+(1-\mu)u\in \mathcal{S}.$ Then we have $x=(1-\alpha)u+\alpha v$ for some $\alpha,0<\alpha<1.$ Note that for any $y\in \mathcal{S}$, we have $(1-\alpha)y+\alpha v\in \mathcal{S}$ and $[(1-\alpha)y+\alpha v]-x=(1-\alpha)(y-u).$ It follows that cone$(u,\mathcal{S})\subseteq$cone$(x,\mathcal{S})$. By taking the closure of both sides, we have $\mathcal{T_S}(u)\subseteq\mathcal{T_S}(x)$. Since $\sum_{i=1}^\ell \beta_i=1$ and $\beta_i>0$ for every $i\in \mathcal{I}(\ell)$,  $y^i\in \Phi(x)$, for every $i\in \mathcal{I}(\ell)$ we have $y^i\in \Phi(x)$,   the lemma follows immediately. 
\end{proof}

For the polyhedron $\mathcal{P}$ given as in (\ref{poly2}), a vertex of $\mathcal{P}$ is given as   $x^i,$ for some $ i\in \mathcal{I}(\ell_1)$, and an extreme ray of $\mathcal{P}$ is represented as $x^i+\alpha\hat{x}^j, \alpha>0,$ for some $i\in \mathcal{I}(\ell_1)$ and $j\in \mathcal{I}(\ell_2).$
Applying Lemma \ref{lemma:general} to $\mathcal{P}$, we have the following Corollary
\ref{lemma36} about the relationship between tangent cones at a vector and the vertices and extreme rays of $\mathcal{P}$. Note that
$\mathcal{T_P}(x)=\mathbb{R}^n$ for every $x\in \text{int}(\mathcal{S})$, thus Corollary \ref{lemma36} is only nontrivial for $x\in \partial\mathcal{P}.$

\begin{corollary}\label{lemma36}
Let a polyhedron $\mathcal{P}$ be given as in (\ref{poly2}), and $x\in \mathcal{P}$ be a point in $\mathcal{P}$ given as in formula (\ref{poly2}).
Let $\mathcal{I}_1=\{i\in\mathcal{I}(\ell_1)\,|\,\theta_i>0\}$ and $\mathcal{I}_2=\{j\in\mathcal{I}(\ell_2)\,|\,\hat\theta_j>0\}$. Then $\mathcal{T_P}(x^i)\subseteq\mathcal{T_P}(x)$ and $\mathcal{T_P}(x^i+\alpha\hat {x}^j)=\mathcal{T_P}(x^i+\hat {x}^j)\subseteq\mathcal{T_P}(x)$
for $i\in \mathcal{I}_1, j\in \mathcal{I}_2$, and $\alpha>0$, where $x^i+\alpha\hat {x}^j$ is an extreme  ray of $\mathcal{P}$.
\end{corollary}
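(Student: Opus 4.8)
The plan is to deduce every assertion from Lemma~\ref{lemma:general} by exhibiting the point in question as a convex combination, with \emph{strictly positive} coefficients, of points of $\mathcal{P}$; the whole argument is then bookkeeping with the representation (\ref{poly2}). (Since $\mathcal{T_P}(x)=\mathbb{R}^n$ when $x\in\mathrm{int}(\mathcal{P})$, the statement is only of interest for $x\in\partial\mathcal{P}$, but the constructions below do not use this.) Throughout I use the elementary fact that $x^i+\gamma\hat{x}^j\in\mathcal{P}$ for every $\gamma\ge0$, and more generally that subtracting a multiple of a generator from a point of $\mathcal{P}$ and renormalizing keeps one inside $\mathcal{P}$ as long as the vertex weights stay nonnegative and still sum to one.

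I would first dispose of the equality $\mathcal{T_P}(x^i+\alpha\hat{x}^j)=\mathcal{T_P}(x^i+\hat{x}^j)$ for all $\alpha>0$. Fix $0<\alpha_1<\alpha_2$. On one hand $x^i+\alpha_1\hat{x}^j=\frac{\alpha_1}{\alpha_2}\bigl(x^i+\alpha_2\hat{x}^j\bigr)+\bigl(1-\frac{\alpha_1}{\alpha_2}\bigr)x^i$ is a convex combination with positive weights of two points of $\mathcal{P}$, so Lemma~\ref{lemma:general} gives $\mathcal{T_P}(x^i+\alpha_2\hat{x}^j)\subseteq\mathcal{T_P}(x^i+\alpha_1\hat{x}^j)$. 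On the other hand, choosing any $\gamma>\alpha_2$ and solving a scalar equation for the weight exhibits $x^i+\alpha_2\hat{x}^j$ as a convex combination with positive weights of $x^i+\alpha_1\hat{x}^j$ and $x^i+\gamma\hat{x}^j$, giving the reverse inclusion. Since $\alpha_1,\alpha_2$ were arbitrary, all tangent cones along the open ray coincide; this lets me freely replace $\alpha$ by any convenient positive value below.

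Next I would prove the inclusions $\mathcal{T_P}(x^i)\subseteq\mathcal{T_P}(x)$ for $i\in\mathcal{I}_1$ and $\mathcal{T_P}(x^i+\alpha\hat{x}^j)\subseteq\mathcal{T_P}(x)$ for $i\in\mathcal{I}_1,\ j\in\mathcal{I}_2,\ \alpha>0$, both by peeling a small piece off the representation of $x$. For $i\in\mathcal{I}_1$ set $t:=\theta_i/2$, so $0<t\le\theta_i$ and $t<1$; put $w:=(x-t\,x^i)/(1-t)$. Expanding the representation of $x$, the point $w$ has vertex coefficients $(\theta_i-t)/(1-t)\ge0$ on $x^i$ and $\theta_k/(1-t)\ge0$ on $x^k$ ($k\ne i$), which sum to $(1-t)/(1-t)=1$, and conic coefficients $\hat\theta_l/(1-t)\ge0$ on $\hat{x}^l$; hence $w\in\mathcal{P}$. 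Then $x=t\,x^i+(1-t)\,w$ is a convex combination with positive weights, so Lemma~\ref{lemma:general} yields $\mathcal{T_P}(x^i)\subseteq\mathcal{T_P}(x)$. For the ray inclusion, additionally fix $j\in\mathcal{I}_2$, set $\beta:=\hat\theta_j/t>0$ and $w:=\bigl(x-t(x^i+\beta\hat{x}^j)\bigr)/(1-t)$; now the $\hat{x}^j$-coefficient is cancelled exactly ($\hat\theta_j-t\beta=0$), so $w$ again lies in $\mathcal{P}$ by the same check, $x=t\,(x^i+\beta\hat{x}^j)+(1-t)\,w$ is a positively weighted convex combination, and Lemma~\ref{lemma:general} gives $\mathcal{T_P}(x^i+\beta\hat{x}^j)\subseteq\mathcal{T_P}(x)$; together with the first step this is $\mathcal{T_P}(x^i+\alpha\hat{x}^j)\subseteq\mathcal{T_P}(x)$ for every $\alpha>0$.

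The one delicate point — the intended main obstacle — is the verification that the renormalized remainder $w$ really belongs to $\mathcal{P}$: that its vertex weights remain nonnegative (which is exactly what forces $t\le\theta_i$, hence the choice $t=\theta_i/2$) and still sum to one, and that its conic weights remain nonnegative. Everything else is routine, and the degenerate configurations ($\mathcal{I}_1$ or $\mathcal{I}_2$ a singleton, $x$ a vertex, or $x$ on an extreme ray) are subsumed by the same formulas and require no separate discussion.
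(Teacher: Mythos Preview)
Your proof is correct and follows essentially the same approach as the paper: the paper simply states that the corollary is obtained by ``applying Lemma~\ref{lemma:general} to $\mathcal{P}$'' without supplying any details, and your argument is precisely a careful fleshing-out of that application, exhibiting $x$ as a two-term convex combination (via the peeling-off trick with $t=\theta_i/2$) so that Lemma~\ref{lemma:general} applies directly. Your separate treatment of the equality $\mathcal{T_P}(x^i+\alpha\hat{x}^j)=\mathcal{T_P}(x^i+\hat{x}^j)$ along the ray is also a clean use of the same lemma and fills in a step the paper leaves implicit.
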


Let us consider a polytope $\mathcal{\tilde{P}}$ generated by $\{x^1,x^2,...,x^{\ell_1}\}$ as its vertices. Then, according to \cite{blan6}, we have that $\mathcal{T_{\mathcal{\tilde{P}}}}(x^i)$ can be generated as a conic combination of $x^p-x^i$ for all $p\in \mathcal{I}(\ell_1)$, i.e.,  
$\mathcal{T_{\mathcal{\tilde{P}}}}(x^i)=\{y|y=\sum_{p=1, p\neq i}^{\ell_1}\alpha_p(x^p-x^i),\alpha_p\geq0\}.$
Let $\alpha_i=\sum_{p=1,p\neq i}^{\ell_1}\alpha_p.$ Then we have  
$$\mathcal{T_{\mathcal{\tilde{P}}}}(x^i)=\Big\{y|y=\sum_{p=1}^{\ell_1}\alpha_px^p, \alpha_p\geq0, p\neq i, \sum_{p=1}^{\ell_1}\alpha_p=0\Big\}.$$ 
By a similar argument, we have that the exact representations  of the tangent cones at vertices or extreme rays of $\mathcal{P}$ given as in (\ref{poly2}) are presented in Lemma \ref{lem:BasicThm-TanCone} below.

\begin{lemma}\label{lem:BasicThm-TanCone}
Let a polyhedron $\mathcal{P}$ be given as in (\ref{poly2}), and $\mathcal{I}_1'=\{i\in \mathcal{I}(\ell_1)\,|\,$ for any $j\in \mathcal{I}(\ell_2), x^i+\hat{x}^j \text{ is not an extreme ray.}\}$, $\mathcal{I}_1''=\mathcal{I}(\ell_1)\backslash\mathcal{I}_1',$ then

1). For every $i\in\mathcal{I}_1',$ we have
$\mathcal{T}_{\mathcal{P}}(x^i)=\{y\in\mathbb{R}^n\,|\, y=\sum_{p=1}^{\ell_1}\alpha_px^p, \alpha_p\geq0, p\neq i, \sum_{p=1}^{\ell_1}\alpha_p=0\}.$

2). For every $i\in\mathcal{I}_1'',$ we have
$\mathcal{T}_{\mathcal{P}}(x^i)=\{y\in\mathbb{R}^n\,|\, y=\sum_{p=1}^{\ell_1}\alpha_px^p+\sum_{q=1}^{\ell_2}\hat{\alpha}_q \hat{x}^q, \alpha_p, \hat{\alpha}_q\geq0, p\neq i, \sum_{p=1}^{\ell_1}\alpha_p=0\}.$

3). For every $i\in\mathcal{I}_1''$ and $j\in\mathcal{I}(\ell_2)$ such that $x^i+\hat{x}^j$ is an extreme ray, we have
$\mathcal{T}_{\mathcal{P}}(x^{i}+\hat{x}^j)=\{y\in\mathbb{R}^n\,|\, y=\sum_{q=1}^{\ell_2}\hat{\alpha}_q \hat{x}^q, \hat{\alpha}_q\geq0, j\neq q\}.$
\end{lemma}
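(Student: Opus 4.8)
The plan is to reduce the whole statement to the standard description of the contingent cone of a convex set. For a closed convex set $\mathcal{S}$ and $x\in\mathcal{S}$ one has $\mathcal{T}_{\mathcal{S}}(x)=\mathrm{cl}\big(\{\lambda(z-x)\,|\,\lambda\ge0,\ z\in\mathcal{S}\}\big)$, and when $\mathcal{S}$ is a polyhedron this radial cone is finitely generated, hence already closed, so the closure may be dropped (see \cite{hul}). Thus the lemma amounts to writing out $\mathrm{cone}(\mathcal{P}-x)$ for $x=x^i$ (with $i\in\mathcal{I}_1'$ or $i\in\mathcal{I}_1''$) and for $x=x^i+\hat x^j$, and then simplifying the finite generating set that results.

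For a point $x^i$, I would take a generic $z=\sum_p\theta_px^p+\sum_q\hat\theta_q\hat x^q\in\mathcal{P}$ and use $\sum_p\theta_p=1$ to write $z-x^i=\sum_{p\ne i}\theta_p(x^p-x^i)+\sum_q\hat\theta_q\hat x^q$; scaling by $\lambda\ge0$ and absorbing the normalization shows that $\mathcal{T}_{\mathcal{P}}(x^i)$ is exactly the cone generated by $\{x^p-x^i:p\ne i\}\cup\{\hat x^q:q\in\mathcal{I}(\ell_2)\}$. Introducing the slack coefficient $\alpha_i:=-\sum_{p\ne i}\alpha_p$ rewrites this cone in the form $\{\sum_p\alpha_px^p+\sum_q\hat\alpha_q\hat x^q\,|\,\alpha_p\ge0\ (p\ne i),\ \sum_p\alpha_p=0,\ \hat\alpha_q\ge0\}$, which is part 2). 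For part 1), I would use that every unbounded edge of $\mathcal{P}$ through the vertex $x^i$ corresponds to an extreme ray of the (pointed) cone $\mathcal{T}_{\mathcal{P}}(x^i)$ lying in a recession direction; the hypothesis $i\in\mathcal{I}_1'$ says no $x^i+\alpha\hat x^j$ is such an edge, so each $\hat x^q$ is a non-extreme generator and may therefore be removed from the generating list without changing the cone, leaving $\mathrm{cone}\{x^p-x^i:p\ne i\}$.

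For a point $x^i+\hat x^j$ lying on an extreme ray, I would compute $z-(x^i+\hat x^j)=\sum_{p\ne i}\theta_p(x^p-x^i)+(\hat\theta_j-1)\hat x^j+\sum_{q\ne j}\hat\theta_q\hat x^q$. Because $\hat\theta_j\ge0$ is unbounded above, after scaling the coefficient of $\hat x^j$ ranges over all of $\mathbb{R}$, so both $\pm\hat x^j$ belong to the tangent cone and the raw generating set is $\{x^p-x^i:p\ne i\}\cup\{\pm\hat x^j\}\cup\{\hat x^q:q\ne j\}$; using that the ray $\{x^i+\alpha\hat x^j:\alpha\ge0\}$ is a face of $\mathcal{P}$ then lets one discard the generators that have become redundant and reach the asserted description. (The equality $\mathcal{T}_{\mathcal{P}}(x^i+\alpha\hat x^j)=\mathcal{T}_{\mathcal{P}}(x^i+\hat x^j)$ for $\alpha>0$ is already furnished by Corollary \ref{lemma36}, so only $\alpha=1$ needs to be handled.) I expect the main obstacle in both the vertex and the edge case to be this final pruning step: converting the combinatorial face data of $\mathcal{P}$ — which $x^i$ are genuine vertices and which $x^i+\alpha\hat x^j$ are edges — into precise redundancy statements about the finite set generating $\mathrm{cone}(\mathcal{P}-x)$, and checking that each discarded generator indeed lies in the cone spanned by the surviving ones without accidentally collapsing the lineality of the cone.
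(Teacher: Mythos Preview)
Your approach is essentially the same as the paper's: the paper also identifies $\mathcal{T}_{\mathcal{P}}(x^i)$ with the cone of feasible directions $\{\,\sum_{p\ne i}\alpha_p(x^p-x^i):\alpha_p\ge 0\,\}$ and then introduces the slack coefficient $\alpha_i=-\sum_{p\ne i}\alpha_p$ to reach the stated form. The paper's proof is in fact far terser than yours---it gives only that one-line argument for part~1) and dismisses parts~2) and~3) as ``easy to verify''---so the pruning step you correctly flag as the main obstacle is not worked out there either.
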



\begin{lemma}\label{lemma34}
Let $\mathcal{C}$ be a closed convex cone. If $x+\alpha y\in \mathcal{C}$ for all $\alpha>0,$ then $x,y\in \mathcal{C}.$
\end{lemma}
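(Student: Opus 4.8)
The plan is to derive the two memberships separately, each time invoking exactly one of the two defining features of $\mathcal{C}$ — that it is a cone and that it is closed. Throughout I read the hypothesis in the only way that makes the conclusion true, namely that $x+\alpha y\in\mathcal{C}$ holds for \emph{every} $\alpha>0$; for a single fixed $\alpha$ the claim is false, as the example $\mathcal{C}=\mathbb{R}_+$, $x=-1$, $y=2$, $\alpha=1$ shows.

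First I would show $x\in\mathcal{C}$. The whole family $\{x+\alpha y\}_{\alpha>0}$ lies in $\mathcal{C}$, and as $\alpha\to 0_+$ these points converge to $x$; since $\mathcal{C}$ is closed, the limit $x$ belongs to $\mathcal{C}$. Next I would show $y\in\mathcal{C}$. Fix any $\alpha>0$: because $\mathcal{C}$ is a cone and $x+\alpha y\in\mathcal{C}$, the scaled vector $\tfrac{1}{\alpha}(x+\alpha y)=\tfrac{1}{\alpha}x+y$ is again in $\mathcal{C}$. Letting $\alpha\to\infty$, the vectors $\tfrac{1}{\alpha}x+y$ converge to $y$, and closedness of $\mathcal{C}$ forces $y\in\mathcal{C}$.

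There is essentially no obstacle here: the statement reduces to a two-line limiting argument once one observes that the hypothesis concerns the entire ray $\{x+\alpha y:\alpha>0\}$. The only point worth a word of care is that each half of the conclusion genuinely needs a different property of $\mathcal{C}$ — the first half would fail for an arbitrary (non-closed) convex cone, while the second half would fail for an arbitrary closed convex set that is not a cone — so both hypotheses are used and neither can be dropped.
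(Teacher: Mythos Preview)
Your argument is correct, and your reading of the quantifier on $\alpha$ is the only sensible one. The paper states this lemma without proof, so there is no alternative argument to compare against; your two limiting steps (sending $\alpha\to 0_+$ for $x$ and, after rescaling by $1/\alpha$, sending $\alpha\to\infty$ for $y$) are exactly the standard justification and would serve perfectly well as the missing proof. Your side remark that convexity is never used is also accurate.
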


The following lemma presents an invariance condition for a polyhedron in the form of (\ref{poly2}) for the continuous system (\ref{dyna1}).
\begin{lemma}\label{lemmap15}
Let a polyhedron $\mathcal{P}$ be given as in (\ref{poly2}). Then $\mathcal{P}$ is an invariant set for the continuous system (\ref{dyna1}) if and
only if $Ax^i\in\mathcal{T}_{\mathcal{P}}(x^i)$ and $A\hat{x}^j\in\mathcal{T}_{\mathcal{P}}(x^i+\hat{x}^j)$ for $i\in \mathcal{I}(\ell_1)$ and $j\in \mathcal{I}(\ell_2)$, where $x^i+\alpha \hat{x}^j$ for $\alpha\geq0$ is an extreme ray of $\mathcal{P}.$
\end{lemma}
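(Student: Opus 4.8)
The plan is to apply Nagumo's Theorem \ref{nagumo} together with the structural results on tangent cones of $\mathcal{P}$ already assembled, namely Corollary \ref{lemma36}, Lemma \ref{lem:BasicThm-TanCone}, and Lemma \ref{lemma34}. By Nagumo's Theorem, $\mathcal{P}$ is invariant for (\ref{dyna1}) if and only if $Ax\in\mathcal{T}_{\mathcal{P}}(x)$ for every $x\in\partial\mathcal{P}$. So the task is to show that this family of conditions, ranging over all boundary points, is equivalent to the finite list of conditions $Ax^i\in\mathcal{T}_{\mathcal{P}}(x^i)$ for $i\in\mathcal{I}(\ell_1)$ and $A\hat x^j\in\mathcal{T}_{\mathcal{P}}(x^i+\hat x^j)$ for $i\in\mathcal{I}(\ell_1)$, $j\in\mathcal{I}(\ell_2)$. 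The ``only if'' direction is immediate: vertices $x^i$ are boundary points, and each extreme ray $x^i+\alpha\hat x^j$ lies on $\partial\mathcal{P}$, so applying Nagumo at $x^i$ gives $Ax^i\in\mathcal{T}_{\mathcal{P}}(x^i)$; applying it along the extreme ray, using $\mathcal{T}_{\mathcal{P}}(x^i+\alpha\hat x^j)=\mathcal{T}_{\mathcal{P}}(x^i+\hat x^j)$ from Corollary \ref{lemma36}, gives $A(x^i+\alpha\hat x^j)\in\mathcal{T}_{\mathcal{P}}(x^i+\hat x^j)$ for all $\alpha>0$, and since this tangent cone is a closed convex cone, Lemma \ref{lemma34} extracts $Ax^i\in\mathcal{T}_{\mathcal{P}}(x^i+\hat x^j)$ and $A\hat x^j\in\mathcal{T}_{\mathcal{P}}(x^i+\hat x^j)$; the stated list follows (the condition on $Ax^i$ is already recorded).

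For the ``if'' direction, I would fix an arbitrary $x\in\partial\mathcal{P}$, write it in the representation (\ref{poly2}) as $x=\sum_{i}\theta_ix^i+\sum_j\hat\theta_j\hat x^j$, and set $\mathcal{I}_1=\{i\,|\,\theta_i>0\}$, $\mathcal{I}_2=\{j\,|\,\hat\theta_j>0\}$. The goal is $Ax\in\mathcal{T}_{\mathcal{P}}(x)$. Write $Ax=\sum_{i\in\mathcal{I}_1}\theta_iAx^i+\sum_{j\in\mathcal{I}_2}\hat\theta_jA\hat x^j$. By hypothesis each $Ax^i\in\mathcal{T}_{\mathcal{P}}(x^i)$, and by Corollary \ref{lemma36}, $\mathcal{T}_{\mathcal{P}}(x^i)\subseteq\mathcal{T}_{\mathcal{P}}(x)$ for $i\in\mathcal{I}_1$, so $Ax^i\in\mathcal{T}_{\mathcal{P}}(x)$. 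For the ray generators: by hypothesis $A\hat x^j\in\mathcal{T}_{\mathcal{P}}(x^i+\hat x^j)$ for every $i$, and in particular for some $i\in\mathcal{I}_1$ (such an $i$ exists since $\mathcal{I}_1\neq\emptyset$ by the convex-combination constraint $\sum\theta_i=1$); again Corollary \ref{lemma36} gives $\mathcal{T}_{\mathcal{P}}(x^i+\hat x^j)\subseteq\mathcal{T}_{\mathcal{P}}(x)$, so $A\hat x^j\in\mathcal{T}_{\mathcal{P}}(x)$. Since $\mathcal{T}_{\mathcal{P}}(x)$ is a convex cone and all $\theta_i,\hat\theta_j\geq 0$, the nonnegative combination $Ax=\sum\theta_iAx^i+\sum\hat\theta_jA\hat x^j$ lies in $\mathcal{T}_{\mathcal{P}}(x)$. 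Nagumo's Theorem \ref{nagumo} then yields invariance.

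I expect the main subtlety to be a clean bookkeeping of the two representations of $\mathcal{P}$ and the fact that an extreme ray ``$x^i+\hat x^j$'' is only well-defined for certain pairs $(i,j)$ (those for which $\mathcal{I}_1''$ of Lemma \ref{lem:BasicThm-TanCone} applies); one must check that for every generator $\hat x^j$ with $j\in\mathcal{I}_2$ and every $i\in\mathcal{I}_1$ one really does have $\hat x^j$ available as part of a valid tangent-cone membership statement, or else reduce to the case where $x^i+\hat x^j$ is a genuine extreme ray via the descriptions in Lemma \ref{lem:BasicThm-TanCone}. A second point requiring care is the use of Lemma \ref{lemma34} in the ``only if'' part: one needs $\mathcal{T}_{\mathcal{P}}(x^i+\hat x^j)$ to be a \emph{closed} convex cone, which holds because tangent cones of polyhedra are polyhedral cones (explicitly given in Lemma \ref{lem:BasicThm-TanCone}), so closedness is automatic. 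Modulo these checks, the argument is a direct assembly of the preceding lemmas.
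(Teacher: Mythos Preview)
Your proposal is correct and follows essentially the same route as the paper's own proof: Nagumo at vertices and along extreme rays together with Lemma \ref{lemma34} for necessity, and Corollary \ref{lemma36} plus the convex-cone structure of $\mathcal{T}_{\mathcal{P}}(x)$ for sufficiency. Your handling of the ``if'' direction is in fact slightly cleaner than the paper's, which writes $Ax$ as lying in a \emph{union} of tangent cones rather than explicitly using that each summand already lies in the convex cone $\mathcal{T}_{\mathcal{P}}(x)$.
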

\begin{proof}
We first consider the ``only if" part. According to Nagumo's Theorem \ref{nagumo},  for any $i\in \mathcal{I}(\ell_1)$ and $j\in \mathcal{I}(\ell_2)$ when $x^i+\alpha\hat{x}^j$ for $ \alpha\geq0$ is an extreme ray, we have $Ax^i\in\mathcal{T}_{\mathcal{P}}(x^i)$ and $A(x^i+\alpha\hat{x}^j)\in\mathcal{T}_{\mathcal{P}}(x^i+\hat{x}^j)$. By Lemma \ref{lemma34},  this implies that $A\hat{x}^j\in\mathcal{T}_{\mathcal{P}}(x^i+\hat{x}^j)$. 

For the ``if" part, we choose $x\in \mathcal{P}.$ We represent  $x$ as $x=\sum_{i\in\mathcal{I}_1}\theta_i
x^i+\sum_{j\in\mathcal{I}_2}\hat{\theta}_j\hat{x}^j,$ where
$\mathcal{I}_1=\{i\in\mathcal{I}(\ell_1)\,|\,\theta_i>0\}$ and $\mathcal{I}_2=\{j\in\mathcal{I}(\ell_2)\,|\,\hat\theta_j>0\}.$ Then according to Corollary \ref{lemma36}, we have
$Ax=\sum_{i\in\mathcal{I}_1}\theta_i
Ax^i+\sum_{j\in\mathcal{I}_2}\hat{\theta}_jA\hat{x}^j$. Note that $Ax^i\in \mathcal{T}_{\mathcal{P}}(x^i)\subseteq\mathcal{T}_{\mathcal{P}}(x)$ and $A\hat{x}^j\in\mathcal{T}_{\mathcal{P}}(x^i+\hat{x}^j)\subseteq\mathcal{T}_{\mathcal{P}}(x)$. Since $\mathcal{T}_{\mathcal{P}}$ is a convex cone, it is closed under vector addition. So we have $Ax\in \mathcal{T}_{\mathcal{P}}(x).
$ Finally, the ``if" part follows by Nagumo's Theorem \ref{nagumo}.
\end{proof}

 By Lemma \ref{lem:BasicThm-TanCone}
and Lemma \ref{lemmap15}, the following corollary is immediate.
\begin{corollary}\label{lemmap2}
Let a polyhedron $\mathcal{P}$ be given as in  (\ref{poly2}). Then $\mathcal{P}$ is an invariant set for the continuous system (\ref{dyna1}) if and
only if for  $x^i$, $i\in \mathcal{I}(\ell_1)$, there exist $\alpha_p^i,\hat\alpha_q^i\geq0$ for $p\neq i$,  $\alpha_i^i\leq0$, such that
\begin{equation}\label{eq52}
Ax^i =\sum_{p=1}^{\ell_1} \alpha_p^i x^p+\sum_{q=1}^{\ell_2}\hat{\alpha}_q^i \hat{x}^q, \text{ and } \sum_{p=1}^{\ell_1}\alpha_p^i=0,
\end{equation}
for $\hat{x}^j$, $j\in \mathcal{I}(\ell_2)$,  there exist $ \hat\alpha_q^j\geq0$ for $q\neq j$, and $\hat\alpha_j^j\in \mathbb{R},$ such that
$A\hat{x}^j=\sum_{q=1}^{\ell_2}\hat{\alpha}_q^j \hat{x}^q.$
\end{corollary}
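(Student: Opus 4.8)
The plan is to derive Corollary~\ref{lemmap2} as a direct combination of the tangent-cone formulas in Lemma~\ref{lem:BasicThm-TanCone} with the geometric invariance condition of Lemma~\ref{lemmap15}. Since Lemma~\ref{lemmap15} already says that $\mathcal{P}$ given as in (\ref{poly2}) is invariant for (\ref{dyna1}) if and only if $Ax^i\in\mathcal{T}_{\mathcal{P}}(x^i)$ for all $i\in\mathcal{I}(\ell_1)$ and $A\hat{x}^j\in\mathcal{T}_{\mathcal{P}}(x^i+\hat{x}^j)$ for all $i\in\mathcal{I}(\ell_1)$, $j\in\mathcal{I}(\ell_2)$, what remains is only to substitute the explicit descriptions of these cones. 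First I would recall that, as noted after Lemma~\ref{lemmap15}, the condition on $\hat x^j$ is vacuous unless $x^i+\hat{x}^j$ is an extreme ray, i.e. unless $i\in\mathcal{I}_1''$; and for $i\in\mathcal{I}_1'$ (no extreme ray attached to $x^i$) the two parts 1) and 2) of Lemma~\ref{lem:BasicThm-TanCone} agree once we observe that the conditions $\hat\alpha_q\ge 0$ with $\sum\alpha_p=0$ allow simply taking all $\hat\alpha_q=0$. So the unified statement (\ref{eq52}) for $Ax^i$ — namely $Ax^i=\sum_{p=1}^{\ell_1}\alpha_p^i x^p+\sum_{q=1}^{\ell_2}\hat\alpha_q^i\hat x^q$ with $\alpha_p^i,\hat\alpha_q^i\ge0$ for $p\ne i$ and $\sum_p\alpha_p^i=0$ — is exactly ``$Ax^i\in\mathcal{T}_{\mathcal{P}}(x^i)$'' in both cases $i\in\mathcal{I}_1'$ and $i\in\mathcal{I}_1''$, by parts 1) and 2) of Lemma~\ref{lem:BasicThm-TanCone}.

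Next I would handle the ray condition. For $j\in\mathcal{I}(\ell_2)$, the requirement ``$A\hat{x}^j\in\mathcal{T}_{\mathcal{P}}(x^i+\hat{x}^j)$ for every $i\in\mathcal{I}(\ell_1)$'' is, by part 3) of Lemma~\ref{lem:BasicThm-TanCone}, either automatic (when $x^i+\hat x^j$ is not an extreme ray, as $\mathcal{T}_{\mathcal{P}}=\mathbb{R}^n$ there) or reads $A\hat{x}^j=\sum_{q=1}^{\ell_2}\hat\alpha_q^j\hat{x}^q$ with $\hat\alpha_q^j\ge0$ for $q\ne j$. Crucially, this description of $\mathcal{T}_{\mathcal{P}}(x^{i}+\hat{x}^j)$ does not depend on $i$, so quantifying over all $i$ adds nothing beyond requiring that \emph{some} $x^i+\hat x^j$ be an extreme ray; if none is, the condition is vacuous and so is the displayed equation (one may always write $A\hat x^j$ in that form trivially only when it genuinely lies in the conic hull of $\{\hat x^q\}_{q\ne j}$, but in the vacuous case there is nothing to prove). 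Thus the second displayed condition in Corollary~\ref{lemmap2} is exactly the ray half of Lemma~\ref{lemmap15}.

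Putting the two substitutions together gives the equivalence claimed in Corollary~\ref{lemmap2}: $\mathcal{P}$ is invariant for (\ref{dyna1}) $\iff$ (\ref{eq52}) holds for each $x^i$ and the ray equation holds for each $\hat x^j$. I would present this as: apply Lemma~\ref{lemmap15}; rewrite $Ax^i\in\mathcal{T}_{\mathcal{P}}(x^i)$ using Lemma~\ref{lem:BasicThm-TanCone} parts 1)--2) (noting the two cases collapse into (\ref{eq52})); rewrite $A\hat{x}^j\in\mathcal{T}_{\mathcal{P}}(x^i+\hat{x}^j)$ using part 3) (noting $i$-independence and that the non-extreme-ray case is vacuous). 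The main obstacle — really the only subtlety — is bookkeeping the case distinction $i\in\mathcal{I}_1'$ versus $i\in\mathcal{I}_1''$ and verifying that formula (\ref{eq52}) written with a sum over all $q\in\mathcal{I}(\ell_2)$ correctly specializes to part 1) of Lemma~\ref{lem:BasicThm-TanCone} (by forcing the $\hat\alpha_q^i$ to vanish) when $i\in\mathcal{I}_1'$; once that is observed, the corollary is indeed immediate as the paper asserts. I would keep the proof to two or three sentences, citing Lemmas~\ref{lem:BasicThm-TanCone} and~\ref{lemmap15} and the remark about extreme rays.
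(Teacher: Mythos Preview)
Your approach is exactly what the paper does: it states that the corollary is ``immediate'' from Lemma~\ref{lem:BasicThm-TanCone} and Lemma~\ref{lemmap15}, with no further argument, and your proposal simply fills in the bookkeeping of that substitution. The case analysis you outline (merging parts 1) and 2) of Lemma~\ref{lem:BasicThm-TanCone} into the single formula~(\ref{eq52}), and noting the $i$-independence in part 3)) is more detail than the paper gives, but the route is identical.
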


\begin{theorem}\label{polythmcon2}
A polyhedron $\mathcal{P}$ given as in (\ref{poly2}) is an
invariant set for the continuous  system (\ref{dyna1}) if and only
if there exists a matrix $\tilde{L}\in \mathbb{R}^{(\ell_1+\ell_2)\times (\ell_1+\ell_2)}$, such that $\tilde{L}{\geq}_o0$, 
$X\tilde{L}=AX$, and $ \bar{1}\tilde{L}= \bar{0},$ where
$X=[x^1,...,x^{\ell_1},\hat{x}^1,...,\hat{x}^{\ell_2}]$,
$\bar{1}=[1_{\ell_1},0_{\ell_2}].$
\end{theorem}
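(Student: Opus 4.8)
The plan is to obtain Theorem \ref{polythmcon2} as a purely matrix-level reformulation of Corollary \ref{lemmap2}, in the same way Theorem \ref{polythm2} was read off from its underlying pointwise identities. No new analytic ingredient is needed; the whole task is to pack the vertex- and ray-coefficients supplied by Corollary \ref{lemmap2} into a single matrix $\tilde L$ and to match its $2\times 2$ block pattern against the two requirements $\tilde L{\geq}_o 0$ and $\bar 1\tilde L=\bar 0$.

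For the ``only if'' direction I would start from Corollary \ref{lemmap2}: invariance of $\mathcal{P}$ yields, for each $i\in\mathcal{I}(\ell_1)$, scalars $\alpha^i_p\geq 0$ ($p\neq i$) and $\hat\alpha^i_q\geq 0$ (all $q$) with $Ax^i=\sum_p\alpha^i_p x^p+\sum_q\hat\alpha^i_q\hat x^q$ and $\sum_p\alpha^i_p=0$, and, for each $j\in\mathcal{I}(\ell_2)$, scalars $\hat\alpha^j_q\geq 0$ ($q\neq j$) with $A\hat x^j=\sum_q\hat\alpha^j_q\hat x^q$. I then build $\tilde L$ column by column: its $i$-th column ($i\leq\ell_1$) is $(\alpha^i_1,\dots,\alpha^i_{\ell_1},\hat\alpha^i_1,\dots,\hat\alpha^i_{\ell_2})^T$, and its $(\ell_1+j)$-th column is $(0,\dots,0,\hat\alpha^j_1,\dots,\hat\alpha^j_{\ell_2})^T$. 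Then the $i$-th column of $X\tilde L$ is $Ax^i$ and the $(\ell_1+j)$-th is $A\hat x^j$, so $X\tilde L=AX$; the first $\ell_1$ entries of each column sum to $0$, so $\bar 1\tilde L=\bar 0$; and every off-diagonal entry of $\tilde L$ is one of the listed nonnegative scalars (the off-diagonal entries of the top-left block are the $\alpha^i_p$ with $p\neq i$; the entire bottom-left block consists of the $\hat\alpha^i_q$, all lying strictly below the diagonal; the top-right block is zero; the off-diagonal entries of the bottom-right block are the $\hat\alpha^j_q$ with $q\neq j$), so $\tilde L{\geq}_o 0$.

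For the ``if'' direction I would run this backwards. From $X\tilde L=AX$, reading column by column, each $Ax^i$ and each $A\hat x^j$ is a combination of the $x^p$ and $\hat x^q$ with coefficients the entries of $\tilde L$; $\tilde L{\geq}_o 0$ gives $\alpha^i_p\geq 0$ for $p\neq i$, $\hat\alpha^i_q\geq 0$ for all $q$ (the bottom-left block is below-diagonal), and $\hat\alpha^j_q\geq 0$ for $q\neq j$; and $\bar 1\tilde L=\bar 0$ gives $\sum_p\alpha^i_p=0$. The one step that is more than a transcription is that Corollary \ref{lemmap2} requires each $A\hat x^j$ to be a combination of the $\hat x^q$ \emph{only}, i.e.\ that the top-right block of $\tilde L$ vanish; this is exactly where the two hypotheses must be combined, since those entries are off-diagonal, hence $\geq 0$ by $\tilde L{\geq}_o 0$, while $\bar 1\tilde L=\bar 0$ forces each of those columns to have zero sum over its first $\ell_1$ rows, so the block is zero. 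With this, all hypotheses of Corollary \ref{lemmap2} hold, hence $\mathcal{P}$ is an invariant set for (\ref{dyna1}).

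Accordingly, I do not expect a genuine analytic obstacle here: the difficulty is organizational, namely to set up the block partition of $\tilde L$ correctly and to notice that off-diagonal nonnegativity together with the partial column-sum condition $\bar 1\tilde L=\bar 0$ is simultaneously (i) just enough to force the top-right block to zero and (ii) just enough to recover the sign conditions on the other three blocks, while leaving the diagonal entries $\alpha^i_i$ and $\hat\alpha^j_j$ unconstrained, as Corollary \ref{lemmap2} demands.
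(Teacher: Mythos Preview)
Your proof is correct. For the ``if'' direction you follow essentially the same route as the paper (invoking Corollary \ref{lemmap2}), and you are in fact more explicit than the paper about why the top-right block of $\tilde L$ must vanish --- the paper handles only the columns indexed by $i\in\mathcal{I}(\ell_1)$ in detail and then says the argument for $\hat x^j$ is ``similar''.

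For the ``only if'' direction, however, you take a genuinely different route. The paper does \emph{not} read $\tilde L$ off Corollary \ref{lemmap2}; instead it applies its general discrete-to-continuous framework: by \cite{song2} the polyhedron is invariant for the backward Euler discretization $x_{k+1}=(I-\Delta t A)^{-1}x_k$ for all sufficiently small $\Delta t$, so Theorem \ref{polythm2} supplies a nonnegative $L(\Delta t)$ with $XL(\Delta t)=(I-\Delta tA)^{-1}X$ and $\bar 1 L(\Delta t)=\bar 1$, and one then sets $\tilde L=\lim_{\Delta t\to 0}(L(\Delta t)-I)/\Delta t$. Your argument is more elementary and self-contained: it avoids the external reference \cite{song2}, the discretization step, and the limiting argument (together with the implicit assumption that $L(\Delta t)$ can be selected so that the limit exists). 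What the paper's approach buys is that it exemplifies the unified Euler-based methodology advertised as the paper's main contribution, mirroring exactly the proof of Theorem \ref{polythmcon}.
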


\begin{proof} This proof is similar to the one given in Theorem \ref{polythmcon}. We denote the $i$-th
column of $\tilde{L}$ by $(l_{1,i},...,l_{\ell_1+\ell_2,i})^T$.

For the ``if" part, we consider $x^i$ with $i\in \mathcal{I}(\ell_1) $.  Since $\tilde{L}{\geq}_o0$, $X\tilde{L}=AX$, and $\bar{1}\tilde{L}= \bar{0}$, we have
$
    Ax^i=\sum_{p=1}^{\ell_1}l_{p,i}x^i+\sum_{q=1}^{\ell_2}l_{\ell_1+q,i}\hat{x}^q, \text{ with } \sum_{p=1}^{\ell_1}l_{p,i}=0, \text{ and } l_{p,i}\geq0, \text{ for } p\neq i.
$
The argument for $\hat{x}^j$ with $j\in\mathcal{I}(\ell_2)$ is similar. Then, according to Corollary  \ref{lemmap2}, we have that $\mathcal{P}$ is an invariant set for the continuous system.

For the ``only if" part, the proof is similar to the one in Theorem \ref{polythmcon}. According to Theorem \ref{songthm} and Theorem \ref{polythm2}, we know that there exists a nonnegative 
matrix $L(\Delta t)$ and a scalar $\hat\tau>0$,  such that
$XL(\Delta t)=(I+\Delta t A)X,~~\bar{1}L(\Delta t)= \bar{1},
\text{ for } 0\leq \Delta t \leq \hat\tau,$ i.e.,
\begin{equation*}\label{t3}
X\frac{L(\Delta t)-I}{\Delta t}=AX,~~\bar{1}\frac{L(\Delta t)-I}{\Delta t}= \bar{0}.
\end{equation*}
Let $\tilde{L}=\frac{L(\Delta t)-I}{\Delta t}$, the theorem is immediate. 
\end{proof}

Since the invariance conditions for a polyhedral cone given in the two different forms can be obtained
by  similar discussions as above,  we only present these invariance
conditions without providing the proofs.

\begin{corollary}\label{coro:polycone}
1). A polyhedral cone $\mathcal{C_P}$ given as in (\ref{polycone1})
is an invariant set for the continuous  system (\ref{dyna1}) if and
only if there exists a  matrix $\tilde{H}\in \mathbb{R}^{m\times m}$, such
that $\tilde{H}\geq_o0 $ and $\tilde{H}G=GA$.

2).A polyhedral cone $\mathcal{C_P}$ given as in (\ref{polycone2})
is an invariant set for the continuous  system (\ref{dyna1}) if and
only if there exists a matrix $\tilde{L}\in \mathbb{R}^{\ell\times \ell}$,  such
that $\tilde{L}\geq_o0 $ and $X\tilde{L}=AX$, where $X=[\hat{x}^1,...,\hat{x}^{\ell}]$.
\end{corollary}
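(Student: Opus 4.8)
The plan is to mirror the proofs of Theorem~\ref{polythmcon} and Theorem~\ref{polythmcon2}, replacing the polyhedron by the polyhedral cone and dropping the affine (right-hand-side) data, since a polyhedral cone is exactly the case $b=0$ in (\ref{poly1}) and $\ell_1=0$ in (\ref{poly2}). For part~1), I would first establish the discrete analogue: Corollary~\ref{polythmcon4}.1) already states that $\mathcal{C_P}=\{x\mid Gx\le 0\}$ is invariant for $x_{k+1}=(I-\Delta tA)^{-1}x_k$ iff there exists $H(\Delta t)\ge 0$ with $H(\Delta t)G=G(I-\Delta tA)^{-1}$ (no inequality on $b$ is needed since $b=0$). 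Invoking \cite{song2} to get a uniform $\hat\tau>0$ for which the backward Euler discretization shares $\mathcal{C_P}$ as an invariant set, I would then reformulate $H(\Delta t)G=G(I-\Delta tA)^{-1}$ as $\frac{H(\Delta t)-I}{\Delta t}G=H(\Delta t)GA$, observe $\lim_{\Delta t\to 0}H(\Delta t)=I$, conclude that $\tilde H:=\lim_{\Delta t\to0}\frac{H(\Delta t)-I}{\Delta t}$ exists, and that $\tilde H\,{\geq}_o\,0$ because $H(\Delta t)\ge 0$ forces the off-diagonal entries of the difference quotient to stay nonnegative. Letting $\Delta t\to 0$ yields $\tilde HG=GA$, giving the ``only if'' direction; the ``if'' direction follows from Lemma~\ref{lemmap1} exactly as in Theorem~\ref{polythmcon}, using that the tangent cone of $\mathcal{C_P}$ at $x$ with active set $\mathcal{I}_x$ is $\{y\mid G_i^Ty\le 0,\ i\in\mathcal{I}_x\}$ and that $\tilde H\,{\geq}_o\,0$ makes $G_i^TAx=\tilde H_i^TGx=\sum_j\tilde H_{ij}(G_j^Tx)\le 0$ since $G_i^Tx=0$ and $G_j^Tx\le 0$ for $j\ne i$.

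For part~2), I would run the parallel argument based on the generator representation (\ref{polycone2}). Here Corollary~\ref{polythmcon4}.2) gives the discrete invariance condition $XL=AX$ with $L\ge 0$ for $X=[\hat x^1,\dots,\hat x^\ell]$; the normalization constraint $\tilde 1L=\bar 1$ of Theorem~\ref{polythm2} is absent because a cone has no vertices, so $\ell_1=0$. Using \cite{song2} and the backward Euler scheme $x_{k+1}=(I-\Delta tA)^{-1}x_k$ again, from $XL(\Delta t)=(I-\Delta tA)^{-1}X$ with $L(\Delta t)\ge 0$ and $\lim_{\Delta t\to 0}L(\Delta t)=I$, I would pass to $X\frac{L(\Delta t)-I}{\Delta t}=AXL(\Delta t)$, define $\tilde L:=\lim_{\Delta t\to 0}\frac{L(\Delta t)-I}{\Delta t}$, note $\tilde L\,{\geq}_o\,0$, and take the limit to obtain $X\tilde L=AX$. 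For the converse, from $X\tilde L=AX$ with $\tilde L\,{\geq}_o\,0$ one reads off $A\hat x^j=\sum_{q}l_{q,j}\hat x^q$ with $l_{q,j}\ge 0$ for $q\ne j$, i.e. $A\hat x^j\in\mathcal{T}_{\mathcal{C_P}}(\hat x^j)$ (the tangent cone at a generator contains all nonnegative combinations of the other generators plus the span of $\hat x^j$ itself), and then a convexity argument in the spirit of Lemma~\ref{lemma:general}/Corollary~\ref{lemmap2} — writing any $x\in\mathcal{C_P}$ as $\sum_{j\in\mathcal{I}_2}\hat\theta_j\hat x^j$ and using $\mathcal{T}_{\mathcal{C_P}}(\hat x^j)\subseteq\mathcal{T}_{\mathcal{C_P}}(x)$ — gives $Ax\in\mathcal{T}_{\mathcal{C_P}}(x)$ for all $x\in\partial\mathcal{C_P}$, so Nagumo's Theorem~\ref{nagumo} applies.

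The only genuinely delicate point, as the paper itself flags, is the existence of the limit $\tilde H$ (resp. $\tilde L$): one needs that the difference quotient $\frac{H(\Delta t)-I}{\Delta t}$ actually converges as $\Delta t\to 0$, not merely that $H(\Delta t)\to I$. This is handled exactly as in Theorem~\ref{polythmcon}: rewrite the defining relation so that the difference quotient equals $H(\Delta t)GA$ acting on a basis, which has a limit $GA$ because $H(\Delta t)\to I$ and $G(I-\Delta tA)^{-1}$ is differentiable at $\Delta t=0$ with derivative $GA$; on the subspace complementary to $\ker G$ this pins down $\tilde H$, and for part~1) one may, as in \cite{caste}, exploit that $\ker G$ is $A$-invariant to extend the definition consistently (or simply note any choice works). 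Everything else — the tangent-cone formulas for a polyhedral cone, the off-diagonal-nonnegativity bookkeeping, and the application of Nagumo's theorem — is routine and identical to the polyhedron case, which is why the statement is given without proof in the text. I would therefore present the proof compactly, spelling out part~1) in full and remarking that part~2) is obtained by the same reduction together with Lemma~\ref{lem:BasicThm-TanCone} and Corollary~\ref{lemmap2} specialized to $\ell_1=0$.
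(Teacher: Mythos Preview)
Your proposal is correct and follows exactly the approach the paper intends: the paper explicitly omits the proof, stating that the invariance conditions for a polyhedral cone ``can be obtained by similar discussions as above,'' i.e., by specializing Theorems~\ref{polythmcon} and~\ref{polythmcon2} to the cone case $b=0$ (resp.\ $\ell_1=0$), which is precisely what you do. Your treatment of the limit $\tilde H=\lim_{\Delta t\to 0}\frac{H(\Delta t)-I}{\Delta t}$ is in fact more careful than the paper's own handling of the same point in Theorem~\ref{polythmcon}.
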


According to Theorem \ref{polythmcon2} and Corollary \ref{coro:polycone}, verifying if a polyhedron given as in (\ref{poly2}) or polyhedral cone given as in (\ref{polycone2}) is an invariant set for the continuous system (\ref{dyna1}) can be done by solving a series of  linear optimization problems. 

\subsection{Ellipsoids}
In this section, we consider the invariance condition for ellipsoids  which are represented by a quadratic inequality.

\subsubsection{Invariance Conditions for Discrete Systems}
The \textit{S}-lemma and Proposition
\ref{prop1} are our main tools to obtain the invariance condition of an ellipsoid for a discrete system.
First, we present a technical lemma.

\begin{lemma}\label{lemmaellip}
Let $Q$ be an $n\times n$ real symmetric matrix and let $\alpha$ be a given real number. Then  $x^TQx\geq \alpha$ for all $x\in
\mathbb{R}^n$  if and only if $Q\succeq0$, and $\alpha\leq0.$
\end{lemma}

\begin{theorem}\label{ellipthm}
An ellipsoid $\mathcal{E}$ given as in (\ref{elli}) is an invariant
set for the discrete  system (\ref{dyna2}) if and only if
\begin{equation}\label{ellip3}
\exists\, \mu \in [0,1], \text{ such that } B_k^TQB_k-\mu Q\preceq 0.
\end{equation}
\end{theorem}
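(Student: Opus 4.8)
The plan is to convert the invariance requirement into the nonexistence of a solution of a pair of quadratic inequalities and then apply the \textit{S}-lemma (Theorem \ref{slemm2}). By Proposition \ref{prop1}, $\mathcal{E}$ is an invariant set for (\ref{dyna2}) if and only if $A\mathcal{E}\subseteq\mathcal{E}$. Writing $y=Ax$, a point $y$ belongs to $A\mathcal{E}$ exactly when $x^TQx\le 1$, and it fails to belong to $\mathcal{E}$ exactly when $y^TQy=x^T(A^TQA)x>1$. Hence $A\mathcal{E}\subseteq\mathcal{E}$ fails if and only if there is an $x\in\mathbb{R}^n$ with $x^T(A^TQA)x>1$ and $x^TQx\le 1$. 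So invariance is equivalent to the statement that there is no $x\in\mathbb{R}^n$ with $1-x^T(A^TQA)x<0$ and $x^TQx-1\le 0$; in the notation of Theorem \ref{slemm2} this is statement 1 with $g_1(x)=1-x^T(A^TQA)x$ and $g_2(x)=x^TQx-1$.

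Next I would check the Slater-type hypothesis of the \textit{S}-lemma: since $Q\succ0$ we have $g_2(0)=-1<0$, so Theorem \ref{slemm2} applies (the strictly feasible point is $\hat y=0$, i.e.\ the center of $\mathcal{E}$). It then yields that the nonexistence statement above is equivalent to the existence of a scalar $\rho\ge 0$ with $g_1(x)+\rho g_2(x)\ge 0$ for all $x$, that is,
\[
(1-\rho)-x^T\!\left(A^TQA-\rho Q\right)x\ \ge\ 0\qquad\text{for all }x\in\mathbb{R}^n .
\]

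Finally I would turn this into the desired semidefinite form using Lemma \ref{lemmaellip}. The displayed inequality is the same as $x^T(\rho Q-A^TQA)x\ge \rho-1$ for all $x$, which by Lemma \ref{lemmaellip} (taking ``$Q$'' to be $\rho Q-A^TQA$ and ``$\alpha$'' to be $\rho-1$) holds if and only if $\rho Q-A^TQA\succeq 0$ and $\rho-1\le 0$, i.e.\ $A^TQA-\rho Q\preceq 0$ and $\rho\le 1$. Together with $\rho\ge 0$ from the \textit{S}-lemma this is precisely the existence of $\mu:=\rho\in[0,1]$ with $A^TQA-\mu Q\preceq 0$, so the chain of equivalences gives the theorem.

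I expect the only genuinely delicate point to be the bookkeeping of strict versus non-strict inequalities: the inclusion $A\mathcal{E}\subseteq\mathcal{E}$ must be phrased so that the ``violating'' quadratic condition $x^T(A^TQA)x>1$ is the strict one and the ``feasibility'' condition $x^TQx\le 1$ is the non-strict one, exactly matching the hypotheses of Theorem \ref{slemm2}; and one must observe that $0\in\mathrm{int}(\mathcal{E})$ supplies the required strictly feasible $\hat y$. The rest is routine algebraic rearrangement, and an analogous argument (noting that $Q\succ0$ forces the homogeneous counterpart to behave well) underlies the extension, mentioned in the abstract, to arbitrary sets defined by a single quadratic inequality.
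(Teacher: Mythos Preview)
Your proposal is correct and follows essentially the same route as the paper: reduce invariance to $\mathcal{E}\subseteq\{x:x^TA^TQAx\le 1\}$ via Proposition~\ref{prop1}, recast this as the nonexistence of a solution to the pair $-x^TA^TQAx+1<0$, $x^TQx-1\le 0$, apply the \textit{S}-lemma, and finish with Lemma~\ref{lemmaellip}. If anything you are slightly more careful than the paper, which does not explicitly record the Slater point; note, though, that $g_2(0)=-1<0$ holds regardless of $Q$, so positive definiteness plays no role here either---this is precisely why the same argument yields the extension to arbitrary quadratic $Q$ (Theorem~\ref{genequa}).
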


\begin{proof} According to
Proposition \ref{prop1}, to prove this theorem is
equivalent to prove $\mathcal{E}\subseteq \mathcal{E}'$, where
$
\mathcal{E}=\{x\,|\,x^TQx\leq 1\}$ and $
\mathcal{E}'=\{x\,|\,x^TB_k^TQB_kx\leq 1\}.
$
Clearly, $\mathcal{E}\subseteq \mathcal{E}'$ holds if and only if the
following inequality system has no solution:
\begin{equation}\label{ellip2}
-x^TB_k^TQB_kx+1<0, ~~x^TQx-1\leq 0.
\end{equation}
Note that the left sides of the two inequalities in (\ref{ellip2})
are both quadratic functions, thus, according to the
\textit{S}-lemma, we have that (\ref{ellip2}) has no solution is equivalent
to that there exists $\mu\geq0$, such that $-x^TB_k^TQB_kx+1+\mu(x^TQx-1)\geq 0, $ or equivalently,
\begin{equation}\label{for11}
 x^T(\mu
Q-B_k^TQB_k)x\geq \mu-1,~~ \text{ for all } x\in \mathbb{R}^n.
\end{equation}
The theorem follows by applying Lemma \ref{lemmaellip} to
(\ref{for11}).
\end{proof}

We can also consider an ellipsoid as an invariant set for a system in the following perspective. Invariance of a bounded set for a system is possible only if
the system is non-expansive, which  means that for discrete system
(\ref{dyna2}), all eigenvalues of $B_k$ are in a closed unit
disc of the complex plane. Then it becomes clear that
(\ref{ellip3}) has a solution only if  (\ref{dyna2}) is
non-expansive, i.e., the trajectory of (\ref{dyna2}) is non-expansive. One can conclude from this that there is an invariant
ellipsoid for (\ref{dyna2}) if and only if (\ref{ellip3}) has
a solution for a positive definite $Q$. 

Moreover, we can also observe that the smallest $\mu$
solving (\ref{ellip3}) is the largest eigenvalue of $WA^TQAW$, where
$W$ is the symmetric positive definite square root of $Q^{-1}$, i.e., $W^2=Q^{-1}.$

We now present two examples such that condition (\ref{ellip3}) does not hold for $\mu\not\in[0,1]$.  First, let $Q$ be positive definite and $\mu<0$,  then $B_k^TQB_k-\mu Q$ is always a positive definite matrix. Thus condition (\ref{ellip3}) does not hold. Second, let $Q$ be positive definite and  $\mu>1$, consider the
discrete system $x_{k+1}=-x_k.$ One can prove that $\{x\,|\,x^TQx\leq1\}$ is an invariant set for this discrete system. However, in this case, we have $B_k^TQB_k-\mu Q=(1-\mu)Q$, which is always a
negative definite matrix. Thus condition (\ref{ellip3}) does not hold either. 

Apart from  the  simplicity, another advantage of the approach
given in the proof of Theorem \ref{ellipthm} is that it obtains a
sufficient and necessary condition. Also, this approach
highlights the close relationship between the theory of invariant
sets and the Theorem of Alternatives, which is a fundamental result
in optimization community.

\begin{corollary}
Condition (\ref{ellip3}) holds if and only if
\begin{equation}\label{cor1}
\exists\, \nu\in [0,1], \text{ such that }\tilde{Q}=\left(
  \begin{array}{cc}
    Q^{-1} & B_k \\
    B_k^T & \nu Q \\
  \end{array}
\right)\succeq0.
\end{equation}
\end{corollary}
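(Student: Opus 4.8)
The plan is to show that the two conditions are equivalent by a Schur complement argument, using the fact that $Q \succ 0$ (so $Q^{-1}$ exists and is positive definite). First I would observe that, since $Q^{-1} \succ 0$, the block matrix $\tilde{Q}$ in \eqref{cor1} is positive semidefinite if and only if its Schur complement with respect to the $(1,1)$-block is positive semidefinite, i.e.
\begin{equation*}
\tilde{Q} \succeq 0 \iff \nu Q - A^T (Q^{-1})^{-1} A \succeq 0 \iff \nu Q - A^T Q A \succeq 0.
\end{equation*}
Rearranging, this says $A^T Q A - \nu Q \preceq 0$, which is exactly condition \eqref{ellip3} with $\mu = \nu$. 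Since the admissible range $[0,1]$ for $\mu$ and for $\nu$ is the same, the two existential statements coincide.

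The key steps, in order, are: (1) recall the Schur complement characterization of positive semidefiniteness of a symmetric block matrix $\left(\begin{smallmatrix} M & B \\ B^T & C \end{smallmatrix}\right)$ when $M \succ 0$, namely that it is equivalent to $C - B^T M^{-1} B \succeq 0$; (2) apply it with $M = Q^{-1}$, $B = A$, $C = \nu Q$, using $M \succ 0$ which holds because $Q \succ 0$; (3) simplify $M^{-1} = Q$ to obtain $\nu Q - A^T Q A \succeq 0$; (4) identify this with \eqref{ellip3} by setting $\mu = \nu$ and noting the parameter intervals match. Conversely, given a $\mu \in [0,1]$ satisfying \eqref{ellip3}, the same computation run backwards produces $\nu = \mu$ satisfying \eqref{cor1}.

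I do not expect any real obstacle here; the only thing to be slightly careful about is the direction of the inequality and the symmetry of $A^T Q A$ (which is immediate since $Q$ is symmetric), and to note explicitly that the Schur complement lemma applies because the $(1,1)$-block $Q^{-1}$ is strictly positive definite, not merely semidefinite — this is what makes the equivalence (rather than just a one-way implication) valid. One could alternatively phrase the argument via congruence: conjugating $\tilde{Q}$ by the invertible block matrix $\left(\begin{smallmatrix} I & 0 \\ -A^T Q & I \end{smallmatrix}\right)$ produces the block-diagonal matrix $\mathrm{diag}(Q^{-1},\, \nu Q - A^T Q A)$, whose positive semidefiniteness (since $Q^{-1} \succ 0$) is equivalent to $\nu Q - A^T Q A \succeq 0$; this makes the "if and only if" transparent without invoking the Schur lemma as a black box.
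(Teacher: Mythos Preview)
Your proposal is correct and follows essentially the same approach as the paper: both invoke the Schur complement lemma (using $Q^{-1}\succ0$ as the pivot block) to reduce $\tilde{Q}\succeq0$ to $\nu Q - A^TQA\succeq0$, then identify $\nu=\mu$. Your version is more explicit about the details and offers the congruence reformulation as an alternative, but the core argument is identical.
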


\begin{proof} First, $Q\succ0$ yields $Q^{-1}\succ0$.  By Schur's lemma \cite{boyd2}, $\tilde{Q}\succeq0$ if and only if its Schur complement $\nu
Q-B_k^T(Q^{-1})^{-1}B_k=\nu Q-B_k^TQB_k\succeq0$, i.e.,
(\ref{ellip3}) holds.
\end{proof}

\begin{corollary}\label{cor2}
Condition (\ref{ellip3}) holds if and only if
\begin{equation}\label{ellip4}
B_k^TQB_k-Q\preceq0.
\end{equation}
\end{corollary}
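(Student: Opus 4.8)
\textbf{Proof proposal for Corollary~\ref{cor2}.}
The plan is to show the equivalence between the existence of some $\mu\in[0,1]$ with $A^TQA-\mu Q\preceq 0$ and the single inequality $A^TQA-Q\preceq 0$, i.e.\ the endpoint $\mu=1$ is always attainable whenever any admissible $\mu$ exists. The nontrivial direction is: if $A^TQA-\mu Q\preceq 0$ for some $\mu\in[0,1]$, then $A^TQA-Q\preceq 0$. First I would write $A^TQA-Q = (A^TQA-\mu Q) + (\mu-1)Q$. Since $\mu\le 1$ we have $(\mu-1)Q\preceq 0$ because $Q\succ 0$, and the first summand is $\preceq 0$ by hypothesis; the sum of two negative semidefinite matrices is negative semidefinite, so $A^TQA-Q\preceq 0$. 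The converse direction is immediate: if $A^TQA-Q\preceq 0$, then $\mu=1$ itself witnesses (\ref{ellip3}).

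There is essentially no obstacle here: the whole argument is the observation that the feasible set of $\mu$ in (\ref{ellip3}) is closed under increasing $\mu$ up to $1$ (adding a negative semidefinite multiple of $Q$ preserves negative semidefiniteness), so feasibility of (\ref{ellip3}) is equivalent to feasibility at the right endpoint. Thus the displayed proof is just the two-line computation $A^TQA-Q=(A^TQA-\mu Q)+(\mu-1)Q\preceq 0$ for the forward implication, together with the trivial choice $\mu=1$ for the reverse implication. One could alternatively phrase it via Theorem~\ref{ellipthm}: (\ref{ellip4}) says directly that $\mathcal{E}$ is invariant with the particular witness $\mu=1$, while (\ref{ellip3}) says $\mathcal{E}$ is invariant; but the purely algebraic monotonicity argument is cleaner and avoids invoking the invariance characterization.

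If one wants to make the role of $[0,1]$ transparent, I would add the remark that $\mu>1$ never helps (as already noted in the text, $A^TQA-\mu Q$ can fail to be $\preceq 0$ even when $\mathcal{E}$ is invariant, e.g.\ $A=-I$ gives $(1-\mu)Q\prec 0$ for $\mu<1$ but $(1-\mu)Q\succ 0$ for $\mu>1$), so restricting to $\mu=1$ loses nothing and is in fact the most permissive single choice. Hence (\ref{ellip3}) collapses to the single linear matrix inequality (\ref{ellip4}).
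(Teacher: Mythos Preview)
Your proof is correct and essentially identical to the paper's own argument: the paper sets $\nu=1-\mu$ and writes $A^TQA-Q\preceq -\nu Q\preceq 0$, which is exactly your decomposition $A^TQA-Q=(A^TQA-\mu Q)+(\mu-1)Q$ in slightly different notation, and the converse by taking $\mu=1$ is the same in both.
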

\begin{proof} The ``if" part is immediate by letting $\mu=1$ in
(\ref{ellip3}). For the ``only if'' part, we let $\nu=1-\mu$,
which, by reformulating (\ref{ellip3}), yields
$B_k^TQB_k-Q\preceq -\nu Q\preceq 0,\text{ for } \nu \in [0,1],$
where the second $``\preceq$'' holds due to the fact that $\nu\geq0$
and $Q\succ0$.
\end{proof}

The left side of (\ref{ellip4}) is called
the Lyapunov operator \cite{boyd} in discrete form or Stein transformation \cite{stein11} in dynamical system. Corollary \ref{cor2}
is consistent with the invariance condition of an ellipsoid for discrete system given in
\cite{Blanchini, boyd}. The invariance
condition presented in \cite{Blanchini} is the same as (\ref{ellip4})
without the equality. This is since contractivity
rather than invariance of a set for a system is analyzed in
\cite{Blanchini}.  Lyapunov method is used to
derive condition (\ref{ellip4}) in \cite{boyd}. Apparently, condition
(\ref{ellip4}) is easier to apply than condition (\ref{ellip3}), since
the former one involves only about
the ellipsoid and the system.

The attentive reader may observe that the positive definiteness assumption for matrix Q is never used
in the proof of Theorem \ref{ellipthm}. That assumption was only needed to ensure that the set $\mathcal{S}$ is convex.
Recall that the quadratic functions in the \textit{S}-lemma are not necessarily convex, thus we can extend Theorem
3.16 to general sets which are represented by a quadratic inequality.
\begin{theorem}\label{genequa}
A set $\mathcal{S}=\{x\in\mathbb{R}^n\,|\,x^TQx\leq 1\}$, where $Q\in \mathbb{R}^{n\times n},$ is an invariant
set for the discrete  system (\ref{dyna2}) if and only if
\begin{equation}\label{ellip32d}
\exists\, \mu \in [0,1], \text{ such that } B_k^TQB_k-\mu Q\preceq 0.
\end{equation}
\end{theorem}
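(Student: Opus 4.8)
The plan is to rerun, essentially verbatim, the proof of Theorem \ref{ellipthm}, the whole point being that that argument never used $Q\succ0$ except to guarantee that $\mathcal{S}$ is convex, and convexity plays no role in the chain of equivalences that yields the condition. First I would record two harmless structural facts: $\mathcal{S}=\{x\mid x^TQx\le 1\}$ is always nonempty, since $0^TQ0=0\le 1$, and it is closed, being a sublevel set of the continuous function $x\mapsto x^TQx$. Then, by Proposition \ref{prop1}, $\mathcal{S}$ is an invariant set for (\ref{dyna2}) if and only if $A\mathcal{S}\subseteq\mathcal{S}$, i.e.\ if and only if $x^TQx\le 1$ implies $x^TA^TQAx\le 1$; the reduction to $A\mathcal{S}\subseteq\mathcal{S}$ rests only on the induction in Definition \ref{def1} and needs no convexity.

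Next I would negate the implication to produce an infeasibility statement: $A\mathcal{S}\subseteq\mathcal{S}$ fails exactly when there exists $x\in\mathbb{R}^n$ with $-x^TA^TQAx+1<0$ and $x^TQx-1\le 0$. Both left-hand sides are quadratic in $x$, so the $S$-lemma (Theorem \ref{slemm2}) applies provided a Slater point for the second inequality exists — and $g_2(x)=x^TQx-1$ satisfies $g_2(0)=-1<0$ regardless of the inertia of $Q$, so the hypothesis of Theorem \ref{slemm2} is met exactly as in the $Q\succ0$ case. Hence the infeasibility is equivalent to the existence of $\rho\ge 0$ with $-x^TA^TQAx+1+\rho(x^TQx-1)\ge 0$ for all $x$, that is, $x^T(\rho Q-A^TQA)x\ge \rho-1$ for all $x\in\mathbb{R}^n$. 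Applying Lemma \ref{lemmaellip} to this quadratic form, the inequality holds for all $x$ if and only if $\rho Q-A^TQA\succeq 0$ and $\rho-1\le 0$; combined with $\rho\ge 0$, this is precisely the assertion that some $\mu:=\rho\in[0,1]$ satisfies $A^TQA-\mu Q\preceq 0$, which is (\ref{ellip32d}).

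I do not expect a genuine obstacle here; the only point that deserves explicit care — and the one I would emphasize in writing it up — is justifying that nothing essential is lost when $Q\succ0$ is dropped: the Slater condition for the $S$-lemma survives because it uses only $g_2(0)<0$, and Proposition \ref{prop1} together with Definition \ref{def1} carries through with no appeal to convexity or boundedness. That is exactly why the same three steps (Proposition \ref{prop1} $\to$ $S$-lemma $\to$ Lemma \ref{lemmaellip}) cover the nonconvex and possibly unbounded sets allowed by the theorem.
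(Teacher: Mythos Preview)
Your proposal is correct and follows exactly the approach the paper takes: the paper simply states that the proof of Theorem~\ref{genequa} is identical to that of Theorem~\ref{ellipthm}, since positive definiteness of $Q$ was never used there. Your write-up makes explicit the two places where one might worry the argument could break (the Slater point for the $S$-lemma and the applicability of Proposition~\ref{prop1} without convexity), which is a welcome addition but not a different route.
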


The proof of Theorem \ref{genequa} is the same as that of Theorem 3.16, 
so we do not duplicate that proof here. A trivial example that satisfy  
the condition in is given by choosing $Q$ to be any indefinite matrix, 
$B_k = I$,  and we choose $\mu = 1$. It is easy to see that for this choice 
condition (24) holds. Further exploring the implications of possibly using 
nonconvex and unbounded invariant sets is far from the
main focus of our paper, so this topic remains the subject of further research.

\subsubsection{Invariance Conditions for Continuous Systems}
We first present an interesting result about the solution of continuous system.

\begin{proposition}
The solution of the continuous  system (\ref{dyna1}) is on the
boundary of the ellipsoid $\mathcal{E}$ given as in (\ref{elli})
(or the Lorenz cone $\mathcal{C_L}$ given as in (\ref{ellicone})) whenever $x_0\in \partial\mathcal{E}$(or $x_0\in\partial \mathcal{C_L}$)
if and only if
\begin{equation}\label{cond111}
\sum_{i=0}^{k-1}\frac{1}{(k-1)!}\binom{k-1}{i}(A^{i})^TQA^{k-i-1}=0, \text{ for }
k=2,3,....
\end{equation}
\end{proposition}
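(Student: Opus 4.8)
The plan is to convert the geometric hypothesis into the single matrix identity $e^{A^{T}t}Qe^{At}=Q$ for all $t$, and then extract its Taylor coefficients. The solution of (\ref{dyna1}) with $x(0)=x_{0}$ is $x(t)=e^{At}x_{0}$, so $x(t)^{T}Qx(t)=x_{0}^{T}M(t)x_{0}$, where $M(t):=e^{A^{T}t}Qe^{At}$ is symmetric with $M(0)=Q$. The hypothesis ``the solution stays on the boundary'' means: for every admissible initial point $x_{0}$ the trajectory $x(t)$ stays on that boundary for all $t\geq 0$. For the ellipsoid $\mathcal{E}=\{x\,|\,x^{T}Qx\leq 1\}$ a boundary point has $x_{0}^{T}Qx_{0}=1$, so the hypothesis reads $x_{0}^{T}M(t)x_{0}=x_{0}^{T}Qx_{0}$ for all $x_{0}\in\partial\mathcal{E}$; since $x_{0}\mapsto x_{0}^{T}(M(t)-Q)x_{0}$ is homogeneous of degree two and, as $Q\succ 0$, every nonzero vector is a positive multiple of a point of $\partial\mathcal{E}$, this is equivalent to $M(t)=Q$ for every $t\geq 0$. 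For the Lorenz cone $\mathcal{C_L}$ a boundary point has $x_{0}^{T}Qx_{0}=0$, and invariance of $\partial\mathcal{C_L}$ again forces $x_{0}^{T}M(t)x_{0}=x_{0}^{T}Qx_{0}$ along the trajectory; here the auxiliary inequality $x^{T}Qu_{n}\leq 0$ takes care of itself, because the trajectory is continuous and never zero and, on $\{x\,|\,x^{T}Qx=0\}$, the half-spaces $x^{T}Qu_{n}\leq 0$ and $x^{T}Qu_{n}\geq 0$ meet only at the origin. In both cases the hypothesis amounts to saying that $x^{T}Qx$ is a first integral of (\ref{dyna1}), i.e.\ that $M(t)\equiv M(0)=Q$.

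Next I would expand $M(t)$ using $e^{At}=\sum_{j\geq 0}A^{j}t^{j}/j!$ together with the Cauchy product (equivalently, the Leibniz rule for $\frac{d^{k}}{dt^{k}}$ of the product $e^{A^{T}t}\cdot Qe^{At}$ at $t=0$):
\begin{equation*}
M(t)=e^{A^{T}t}Qe^{At}=\sum_{k\geq 0}\frac{t^{k}}{k!}\,C_{k},\qquad C_{k}:=\sum_{i=0}^{k}\binom{k}{i}(A^{i})^{T}QA^{k-i}.
\end{equation*}
Since $C_{0}=Q$, the identity $M(t)\equiv Q$ holds if and only if $C_{k}=0$ for every $k\geq 1$. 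Setting $m=k-1$, the vanishing of $C_{m}$ for all $m\geq 1$ is, term by term, exactly the system $\sum_{i=0}^{k-1}\frac{1}{(k-1)!}\binom{k-1}{i}(A^{i})^{T}QA^{k-i-1}=0$ for $k=2,3,\dots$, which is (\ref{cond111}); this closes the chain of equivalences. An alternative to the power series is to differentiate directly: $\frac{d}{dt}\big(x(t)^{T}Qx(t)\big)=x(t)^{T}(A^{T}Q+QA)x(t)$, so $x^{T}Qx$ is a first integral iff $A^{T}Q+QA=0$; and $A^{T}Q+QA=0$ gives $(A^{T})^{i}Q=(-1)^{i}QA^{i}$ by induction, hence $C_{m}=QA^{m}\sum_{i=0}^{m}\binom{m}{i}(-1)^{i}=QA^{m}(1-1)^{m}=0$ for all $m\geq 1$, while conversely $C_{1}=A^{T}Q+QA$.

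The Cauchy-product expansion and the index shift $k\leftrightarrow m=k-1$ that matches coefficients with (\ref{cond111}) are routine bookkeeping; the step that genuinely needs care is the reduction carried out in the first paragraph, especially its Lorenz-cone instance. Because $Q$ is indefinite, a symmetric matrix whose quadratic form vanishes on the null cone $\{x\,|\,x^{T}Qx=0\}$ is forced only to be a scalar multiple of $Q$; consequently the hypothesis should be read as ``$x^{T}Qx$ is constant along every trajectory of (\ref{dyna1})'' (equivalently, every level set $\{x\,|\,x^{T}Qx=c\}$, and in particular $\partial\mathcal{E}$ and $\partial\mathcal{C_L}$, is invariant). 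It is this formulation that one should prove, and it is precisely the one equivalent to (\ref{cond111}).
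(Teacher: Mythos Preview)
Your argument is essentially the paper's own: write $x(t)^{T}Qx(t)=x_{0}^{T}\big(e^{A^{T}t}Qe^{At}\big)x_{0}$, expand the matrix exponential, collect the Cauchy-product coefficients, and equate them to zero. The paper does this in two lines, writing $\tilde Q_{k-1}=\sum_{i=0}^{k-1}\tfrac{1}{i!(k-i-1)!}(A^{i})^{T}QA^{k-i-1}$ and then noting $\tfrac{1}{(k-1)!}\binom{k-1}{i}=\tfrac{1}{i!(k-i-1)!}$; your $C_{m}$ with the shift $m=k-1$ is the same object. You are simply more explicit about the reduction step (homogeneity plus symmetry of $M(t)-Q$) that the paper compresses into ``thus $\tilde Q_{k-1}=0$,'' and you add the side remark that $A^{T}Q+QA=0$ alone already forces all higher $C_{m}$ to vanish.

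Your caveat about the Lorenz cone is well taken and in fact sharper than the paper, which dismisses that case as ``similar.'' For indefinite $Q$, vanishing of a symmetric quadratic form on $\{x\mid x^{T}Qx=0\}$ only pins the matrix down to a scalar multiple of $Q$; concretely, $A=\alpha I$ with $\alpha\neq 0$ keeps $\partial\mathcal{C_L}$ invariant while $A^{T}Q+QA=2\alpha Q\neq 0$, so (\ref{cond111}) fails at $k=2$. Reading the hypothesis as ``$x^{T}Qx$ is a first integral'' (equivalently, invariance of \emph{every} level set), as you propose, is exactly what makes the equivalence with (\ref{cond111}) true in both cases.
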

\begin{proof}
We consider only ellipsoids, and the proof for Lorenz cones is similar. The solution of (\ref{dyna1}) is given as $x(t)=e^{At}x_0,$ thus
$x(t)\in
\partial\mathcal{E} $ if and only if $x_0^T(e^{At})^TQe^{At}x_0=1,$
which can be expanded, by substituting  $e^{At}=\sum_{i=0}^\infty
\frac{A^i}{i!}t^i$, as
\begin{equation*}\label{cond12}
\sum_{k=1}^\infty t^{k-1}x_0^T\tilde{Q}_{k-1}x_0=1, \text{ where }
\tilde{Q}_{k-1}=\sum_{i=0}^{k-1}\frac{1}{(i)!(k-i-1)!}(A^{i})^TQA^{k-i-1},
\end{equation*}
for any $x_0^TQx_0=1$ and $t\geq0.$ Thus, $\tilde{Q}_{k-1}=0,$
for $k\geq2.$ Also, note that
$\frac{1}{(k-1)!}\binom{k-1}{i}=\frac{1}{(i)!(k-i-1)!}$, condition
(\ref{cond111}) is immediate.
\end{proof}

In particular, when $k=2$, condition
(\ref{cond111}) yields $A^TQ+QA=0$. The left hand side of this
equation is called Lyapunov operator in continuous form. 
The following invariance conditions is first given by Stern and Wolkowicz
\cite{stern}, where they  consider only Lorenz cones and their proof is using  the
concept of cross-positivity. Here we present a simple proof.

\begin{lemma}\label{lemma4}
\emph{\cite{stern}} An ellipsoid $\mathcal{E}$ given in the form of  (\ref{elli})
(or a Lorenz cone $\mathcal{C_L}$ given in the form of  (\ref{ellicone}))
is an invariant set for the continuous system (\ref{dyna1}) if and
only if
\begin{equation}\label{no1}
(Ax)^TQx\leq 0, \text{ for all } x\in \partial \mathcal{E} ~( \text{
 or }x\in
\partial \mathcal{C_L}).
\end{equation}

\end{lemma}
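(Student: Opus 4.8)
The plan is to invoke Nagumo's Theorem \ref{nagumo}, for which the only missing ingredient is an explicit description of the tangent cones at boundary points. Since the linear system $\dot{x}(t)=Ax(t)$ has the globally unique solution $x(t)=e^{At}x(0)$ for every initial value, the hypotheses of Theorem \ref{nagumo} are met, so $\mathcal{E}$ (resp.\ $\mathcal{C_L}$) is an invariant set if and only if $Ax\in\mathcal{T}_{\mathcal{E}}(x)$ (resp.\ $Ax\in\mathcal{T}_{\mathcal{C_L}}(x)$) for every $x$ on the boundary. Thus the whole lemma reduces to computing $\mathcal{T}_{\mathcal{E}}(x)$ and $\mathcal{T}_{\mathcal{C_L}}(x)$ and recognizing that membership of $Ax$ in each is precisely the inequality $(Ax)^TQx\leq0$.

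For the ellipsoid, $Q\succ0$ gives $\partial\mathcal{E}=\{x\,|\,x^TQx=1\}$, which is the level set of the single smooth function $x\mapsto x^TQx$ whose gradient $2Qx$ is nonzero for $x\neq0$. Hence, by the standard formula for the tangent cone of a convex set described by a smooth constraint (see e.g.\ \cite{hul}), $\mathcal{T}_{\mathcal{E}}(x)=\{y\in\mathbb{R}^n\,|\,(Qx)^Ty\leq0\}$, and Nagumo's condition $Ax\in\mathcal{T}_{\mathcal{E}}(x)$ for all $x\in\partial\mathcal{E}$ is exactly $(Ax)^TQx\leq0$, i.e.\ (\ref{no1}).

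For the Lorenz cone I would first identify $\partial\mathcal{C_L}=\{x\,|\,x^TQx=0,\ x^TQu_n\leq0\}$: indeed $\mathcal{C_L}$ is a closed convex cone with nonempty interior $\{x\,|\,x^TQx<0,\ x^TQu_n<0\}$, because in the basis $U$ the strict inequality $x^TQx<0$ already forces the $u_n$-component to be nonzero, hence $x^TQu_n\neq0$. Then I split into two cases. At the apex $x=0$, the tangent cone of a cone at its vertex is the cone itself, $\mathcal{T}_{\mathcal{C_L}}(0)=\mathcal{C_L}$, and $A\cdot0=0\in\mathcal{C_L}$ holds trivially, in agreement with $(A\cdot0)^TQ\cdot0=0\leq0$. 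At a boundary point $x\neq0$ the linear constraint $x^TQu_n\leq0$ is strictly satisfied, so near $x$ the set $\mathcal{C_L}$ coincides with $\{z\,|\,z^TQz\leq0\}$, and the latter has a smooth boundary near $x$ because $\nabla(z^TQz)=2Qz$ with $Qx\neq0$ ($Q$ nonsingular, $x\neq0$). A direct Cauchy--Schwarz computation in the basis $U$ shows $z^TQx\leq0$ for every $z\in\mathcal{C_L}$, so $Qx$ spans the normal cone at $x$ and $\mathcal{T}_{\mathcal{C_L}}(x)=\{y\,|\,(Qx)^Ty\leq0\}$. Combining the two cases, Nagumo's criterion for $\mathcal{C_L}$ is exactly $(Ax)^TQx\leq0$ for all $x\in\partial\mathcal{C_L}$; the argument for $-\mathcal{C_L}$ is the same with $u_n$ replaced by $-u_n$.

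The step I expect to be the main obstacle is the tangent cone computation at non-apex boundary points of $\mathcal{C_L}$: since $x\mapsto x^TQx$ is not convex there, one cannot simply quote the "convex smooth constraint" formula, and must instead argue that $\mathcal{C_L}$ is \emph{locally} given by the single constraint $z^TQz\leq0$ (using openness of $z^TQu_n<0$) with a nonvanishing gradient, and pin down the unique supporting hyperplane by verifying directly that $Qx$ is an outward normal, i.e.\ $z^TQx\leq0$ for all $z\in\mathcal{C_L}$. The ellipsoid case and the apex case are routine.
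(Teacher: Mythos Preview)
Your proposal is correct and follows essentially the same approach as the paper: apply Nagumo's Theorem~\ref{nagumo} after identifying the tangent cone at a boundary point as the half-space $\{y\,|\,(Qx)^Ty\leq0\}$, since $Qx$ is the outward normal. The paper actually treats only the ellipsoid and declares the Lorenz cone ``analogous,'' whereas you supply the extra care that case genuinely needs (splitting off the apex, checking that the linear constraint is strict at non-apex boundary points, and verifying via Cauchy--Schwarz that $Qx$ is indeed an outward normal despite the nonconvexity of $x\mapsto x^TQx$); this added detail is welcome but does not change the underlying argument.
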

\begin{proof}
We  consider only ellipsoids, and the proof is  analogous for Lorenz cones. Note that $\partial
\mathcal{E}=\{x\,|\,x^TQx=1\}$, thus the outer normal vector of $\mathcal{E}$ at $x\in \partial \mathcal{E}$ is $Qx$. Then we have $\mathcal{T}_{\mathcal{E}}(x)=\{y\,|\, y^TQx\leq0\}$, thus this
theorem follows by Theorem \ref{nagumo}.
\end{proof}

We
now present a sufficient and necessary  condition that an ellipsoid is invariant for the continuous system.

\begin{theorem}\label{ellipthmcon1}
An ellipsoid $\mathcal{E}$ given as in (\ref{elli}) is an
invariant set for the continuous  system (\ref{dyna1}) if and only
if
\begin{equation}\label{ellip3con1}
A^TQ+QA\preceq 0.
\end{equation}
\end{theorem}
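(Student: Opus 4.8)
The plan is to obtain the continuous-time condition (\ref{ellip3con1}) from the discrete-time condition of Corollary~\ref{cor2}, using the backward Euler discretization exactly as in the proof of Theorem~\ref{polythmcon}. The bridge is the result of \cite{song2}: there is a $\hat\tau>0$ such that $\mathcal{E}$ is an invariant set for the continuous system (\ref{dyna1}) if and only if $\mathcal{E}$ is an invariant set for the discrete system $x_{k+1}=(I-\Delta t A)^{-1}x_k$ for every $0\leq\Delta t\leq\hat\tau$ (for $\Delta t$ small, $I-\Delta t A$ is invertible). So the task reduces to applying Corollary~\ref{cor2} with $B:=(I-\Delta t A)^{-1}$ and analyzing the resulting matrix inequality as $\Delta t\to0_+$.

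The first step is the algebraic reformulation. By Corollary~\ref{cor2}, $\mathcal{E}$ is invariant for $x_{k+1}=Bx_k$ if and only if $B^TQB-Q\preceq0$. Conjugating this inequality by the invertible matrix $B^{-1}=I-\Delta t A$ (a congruence, hence sign-preserving) gives the equivalent inequality $Q-(I-\Delta tA)^TQ(I-\Delta tA)\preceq0$; expanding $(I-\Delta tA)^TQ(I-\Delta tA)=Q-\Delta t(A^TQ+QA)+\Delta t^2A^TQA$ and dividing by $\Delta t>0$, this becomes
\begin{equation*}
-(A^TQ+QA)+\Delta t\,A^TQA\succeq0 .
\end{equation*}

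For the ``only if'' direction I would assume $\mathcal{E}$ invariant for (\ref{dyna1}), invoke \cite{song2} to obtain the displayed inequality for all $\Delta t\in(0,\hat\tau]$, and then let $\Delta t\to0_+$: since the left-hand side depends continuously on $\Delta t$ and positive semidefiniteness is preserved under limits, this yields $-(A^TQ+QA)\succeq0$, i.e., (\ref{ellip3con1}). For the ``if'' direction I would start from (\ref{ellip3con1}) and note that $A^TQA\succeq0$ automatically, because $x^TA^TQAx=(Ax)^TQ(Ax)\geq0$ for every $x$ (this is where $Q\succ0$ enters); hence the displayed inequality is a sum of two positive semidefinite matrices for every $\Delta t\geq0$, so $\mathcal{E}$ is invariant for the backward Euler system for all admissible steplengths, and \cite{song2} returns invariance for the continuous system.

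The step I expect to need the most care is the reformulation under the backward Euler map: recognizing that conjugating $B^TQB-Q\preceq0$ by $B^{-1}$ is a congruence and therefore preserves the inequality is what converts the awkward expression containing $(I-\Delta tA)^{-1}$ into the clean quadratic-in-$\Delta t$ form above; once that is in place, both implications are short. An alternative, more self-contained route would bypass the discretization and apply Nagumo's Theorem~\ref{nagumo} directly: the tangent cone of $\mathcal{E}$ at a boundary point $x$ (where $x^TQx=1$) is the half-space $\{y\in\mathbb{R}^n\,|\,x^TQy\leq0\}$, so invariance is equivalent to $x^TQAx\leq0$ for all $x$ with $x^TQx=1$, hence---by homogeneity of degree two together with $Q\succ0$---for all $x\in\mathbb{R}^n$, which after symmetrizing is precisely $A^TQ+QA\preceq0$. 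I would nonetheless prefer the Euler route to keep the continuous results parallel to, and derived from, the discrete ones, in accordance with the framework of the paper.
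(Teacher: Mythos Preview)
Your proposal is correct, but it inverts the paper's choices: the paper proves this theorem via your ``alternative'' Nagumo route (packaged as Lemma~\ref{lemma4}), not via the Euler discretization. Concretely, the paper invokes Lemma~\ref{lemma4} to obtain $(Ax)^TQx\leq0$ on $\partial\mathcal{E}$, then extends to all of $\mathbb{R}^n$ by the homogeneity/scaling argument you also sketched. The paper does remark afterwards (following Theorem~\ref{ellipthmcon2}) that the Euler approach would also work here, so your primary route is acknowledged as viable---just not the one selected.

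One small caution on your Euler argument: the paper's invocations of \cite{song2} (e.g., in Theorem~\ref{polythmcon}) are used one-directionally, namely continuous invariance $\Rightarrow$ discrete invariance for all sufficiently small steplengths. Your ``if'' direction relies on the converse (discrete invariance for all small steplengths $\Rightarrow$ continuous invariance). This is true---for instance via $(I-\tfrac{t}{N}A)^{-N}\to e^{At}$ together with closedness of $\mathcal{E}$---but you should either confirm that \cite{song2} states it or supply the short argument yourself; otherwise the ``if'' half is more cleanly handled by Lemma~\ref{lemma4}, exactly as the paper does. The trade-off is clear: your Euler route keeps the continuous case parallel to the discrete one and reuses Corollary~\ref{cor2} with a nice congruence trick, while the paper's Nagumo route is shorter and entirely self-contained for ellipsoids (though, as the paper notes, it does not transfer to Lorenz cones because the origin lies on the boundary there).
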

\begin{proof}
According to Lemma \ref{lemma4}, we have that condition (\ref{no1}) holds, i.e., $\mathcal{E}$ is an invariant set for the continuous system if and only if 
\begin{equation}\label{ellipeq1}
x^T(A^TQ+QA)x\leq 0, \text{ for all } x\in \partial \mathcal{E}. 
\end{equation}
Clearly (\ref{ellip3con1}) implies (\ref{ellipeq1}). Now assume (\ref{ellipeq1}) holds. Then 
for all nonzero  $y\in \mathbb{R}^n,$ there exists an $x\in\partial
\mathcal{E}$ and $\gamma>0$, such that $y=\gamma x.$ Then $
y^T(A^TQ+QA)y=\frac{1}{\gamma^2}x^T(A^TQ+QA)x\leq 0, $ which yields
condition (\ref{ellip3con1}). 
\end{proof}

The presented method in the proof of Theorem \ref{ellipthmcon1} is simpler than the traditional
Lyapunov method to derive the invariance condition. However, the approach in the proof cannot be
used for Lorenz cones, since the origin is not in the
interior of Lorenz cones.

\subsection{Lorenz Cones}
A Lorenz cone $\mathcal{C_L}$ given as in (\ref{ellicone})
also has a quadratic form, but the way to obtain the
invariance condition of a Lorenz cone for discrete system is much
more complicated than that for an ellipsoid. The difficulty is mainly due to
the existence of the second constraint in (\ref{ellicone}).

\subsubsection{Invariance Conditions for Discrete Systems}
The representation of the nonconvex set $\mathcal{C_L}\cup (-\mathcal{C_L})=\{x\,|\,x^TQx\leq0\}$ involves only the quadratic form, which is almost the same as an ellipsoid. We can first derive
the invariance condition of this set for discrete
system. Recall that the \textit{S}-lemma does not require that the quadratic
functions have to be convex, thus the
\textit{S}-lemma  is still valid for the nonconvex set.

\begin{theorem}\label{them31}
The nonconvex set $\mathcal{C_L}\cup (-\mathcal{C_L})$ is an
invariant set for the discrete  system (\ref{dyna2}) if and only if
\begin{equation}\label{ellip32}
\exists\, \mu \geq 0, \text{ such that } B_k^TQB_k-\mu Q\preceq 0.
\end{equation}
\end{theorem}

\begin{proof} The proof is closely following the ideas in the proof of Theorem \ref{ellipthm}.
The only difference is that the right side in (\ref{for11}) is 0 rather than
$1-\mu$, which is why the condition $\mu\leq1$ is absent in this
case.
\end{proof}

The invariance condition for $\mathcal{C_L}\cup (-\mathcal{C_L})$
shown in (\ref{ellip32}) is similar to the one proposed by Loewy and
Schneider in \cite{loewy}.  They proved by contradiction using the properties of copositive matrices that when the rank of $A$ is greater than 1, $B_k\mathcal{C_L}\subseteq\mathcal{C_L}$ or $-B_k\mathcal{C_L}\subseteq\mathcal{C_L}$ if and only if (\ref{ellip32}) holds.  They also concluded (see \cite[Lemma 3.1]{loewy}) that when the rank of  $B_k$ is 1, $B_k\mathcal{C_L}\subseteq \mathcal{C_L}$ if and only if there exist two vectors $x,y\in \mathcal{C_L}$,
such that $B_k=xy^T$. 

The following example shows that for the given $B_k$ and $Q$, only $\mu=1$ satisfies condition (\ref{ellip32}).
Let $B_k=Q=\text{{diag}}\{1,...,1,-1\}$. Then the Lorenz cone is an invariant set for the
system, since such a Lorenz cone
is a self-dual cone\footnote{A self-dual cone is a cone that
coincides with its dual cone, where the dual cone for a cone
$\mathcal{C}$ is defined as $\{y\;|\;x^Ty\geq0, \forall
x\in\mathcal{C}\}$. }. The left hand side in
(\ref{ellip32}) is, however, now simplified to $(1-\mu)Q$ which is 
 negative semidefinite only for $\mu=1,$ because inertia$\{Q\}=\{n-1,0,1\}$.

In the case of ellipsoids, we used Schur's  lemma, see, e.g.,  \cite{roos}, to simplify invariance condition (\ref{ellip3}) to (\ref{cor1}), which was further simplified to the parameter free invariance condition (\ref{ellip4}). Although conditions (\ref{ellip3}) and (\ref{ellip32}) are similar, it seems to be impossible to develop a parameter free condition analogous to (\ref{ellip4}) for Lorenz cone. This is since  matrix $Q$ for a Lorenz cone is neither positive nor
negative semidefinite. 

To find the scalar $\mu$ in (\ref{ellip32}) is essentially a semidefinite optimization (SDO) problem. Various
celebrated SDO solvers, {e.g.}, SeDuMi \cite{sedumi},
CVX \cite{cvx}, and SDPT3 \cite{sdpt3}
have been shown robust performance in solving a SDO problems numerically.

\begin{corollary}\label{cor4}
If $\lambda_1(B_k^TQB_k)\leq0$, then the Lorenz cone $\mathcal{C_L}$ given as in (\ref{ellicone}) is an invariant set for the discrete
system (\ref{dyna2}).
\end{corollary}

Corollary \ref{cor4} gives a simple sufficient condition such that a Lorenz cone is an invariant set. However, one must note that this result is valid only if matrix $B_k$ is singular. Let  $\text{dim}(M)$ represent the dimension of a matrix  $M$. In fact, if $B_k$ is nonsingular, then by
Sylvester's law of inertia \cite{horn}, we have that $\lambda_1(B_k^TQB_k)>0$. When $\lambda_1(B_k^TQB_k)\leq0$, we have that the rank of $B_k$ is at most 1. This is because, if the rank is larger than 1, then range$(B_k)\cap\text{span}\{u_1,u_2,...,u_{n-1}\}$ must be a nonzero subspace. 
This is  since the sum of $\text{dim}(\text{range}(B_k))$ and $\text{dim}(\text{span}\{u_1,u_2,...,u_{n-1}\})$ is greater than or equal to $ 2+(n-1)=n+1>n,$ and $\text{dim}(\text{range}(B_k))\cap\text{span}\{u_1,u_2,...,u_{n-1}\})$ is greater than or equal to 1.
Then let $0\neq x\in$ range$(B_k)\cap\text{span}\{u_1,u_2,...,u_{n-1}\}$, we have $x^T(B_k^TQB_k)x>0$, which contradicts to $\lambda_1(B_k^TQB_k)\leq 0.$ Also, note that the rank of $B_k^TQB_k$ is less than or equal to the minimum of the rank of $B_k$ and the rank of $Q$, so if  $B_k^TQB_k$ is not zero matrix and $\lambda_1(B_k^TQB_k)\leq0$, then the rank of  $B_k$ is equal to 1. 

The interval of the scalar $\mu$ in (\ref{ellip32}) can be tightened by incorporating the eigenvalues and
eigenvectors of $Q$. Such a tighter condition is presented in
Corollary \ref{cor51}.

\begin{corollary}\label{cor51}
If $\mu$ satisfies $B_k^TQB_k-\mu Q\preceq0$, then
\begin{equation}\label{con277}
\max\Big\{0,\max_{1\leq i\leq
n-1}\Big\{\frac{u_i^TB_k^TQB_ku_i}{\lambda_i}\Big\}\Big\}\leq
\mu\leq \frac{u_n^TB_k^TQB_ku_n}{\lambda_n}.
\end{equation}
\end{corollary}
\begin{proof}
Multiplying condition (\ref{ellip32}) by $u_i^T$ from the left and
$u_i$ from the right, we have $u_i^TB_k^TQB_ku_i-\mu u_i^TQu_i\leq 0$.
Since $u_i^TQu_i=\lambda_i u_i^Tu_i=\lambda_i>0$, for $i\in
\mathcal{I}(n-1)$, and $u_n^TQu_n=\lambda_n<0$, condition
(\ref{con277}) follows immediately.
\end{proof}

Corollary \ref{cor51} presents tighter bounds for the scalar $\mu$
in (\ref{con277}) in terms of an algebraic form.
The existence of a scalar $\mu$ implies  that the upper bound should be no less than
the lower bound in (\ref{con277}). However, this is not
always true. We now present a geometrical interpretation of the
interval of the scalar $\mu$, that can be directly derived from
Corollary \ref{cor51}.

\begin{corollary}\label{cor211}
The relationship between the vector $B_ku_i, $ and the scalars
$u_i^TB_k^TQB_ku_i$, and $\mu$ are as follows:
\begin{itemize}
  \item If $B_ku_n\notin \mathcal{C_L}\cup(- \mathcal{C_L})$, then
 $\mu$ satisfying (\ref{con277}) does not exist.
  \item If $B_ku_i\in \mathcal{C_L}\cup(-
\mathcal{C_L})$ for all $i\in\mathcal{I}(n-1)$, then
\begin{itemize}
  \item if $B_ku_n\in
\partial\mathcal{C_L}\cup(-\partial \mathcal{C_L})$ and (\ref{con277}) holds, then
$\mu=0$.
  \item if $B_ku_n\in \emph{int
}\mathcal{C_L}\cup(- \emph{int }\mathcal{C_L})$ and (\ref{con277}) holds, then $\mu\in
\Big[0, \frac{u_n^TB_k^TQB_ku_n}{\lambda_n}\Big].$
\end{itemize}
   \item Let $\mathcal{I}=\{i\;|\;B_ku_i\notin \mathcal{C_L}\cup(-
\mathcal{C_L})\}$. If the set $\mathcal{I}\subseteq\mathcal{I}(n-1)$
is nonempty,  then
\begin{itemize}
  \item if $B_ku_n\in
\partial\mathcal{C_L}\cup(-\partial \mathcal{C_L})$, then
$\mu$ satisfying (\ref{con277}) does not exist.
  \item if $B_ku_n\in \emph{int
}(\mathcal{C_L})\cup(- \emph{int}(\mathcal{C_L}))$, then
\begin{itemize}
  \item if there exist $i^*\in\mathcal{I}$,
such that $\frac{u_{i^*}^TB_k^TQB_ku_{i^*}}{\lambda_{i^*}}>
\frac{u_n^TB_k^TQB_ku_n}{\lambda_n}$, then  $\mu$ satisfying (\ref{con277}) does not exist.
  \item otherwise, if (\ref{con277}) holds, then \\
$\mu\in\Big[\max_{i\in\mathcal{I}}\Big\{\frac{u_{i}^TB_k^TQB_ku_{i}}{\lambda_{i}}\Big\},
\frac{u_n^TB_k^TQB_ku_n}{\lambda_n}\Big]$.
\end{itemize}
\end{itemize}
\end{itemize}
\end{corollary}

We now consider the invariance condition of a Lorenz cone $\mathcal{C_L}$ given as in (\ref{ellicone}), which is a convex set and can handle expansive systems.

\begin{lemma}\emph{\cite{stern}}\label{lemma123}
A Lorenz cone $\mathcal{C_L}$ given as in (\ref{ellicone}) can be written as $T\mathcal{K}_n$, where $\mathcal{K}_n$ is the standard Lorenz cone and $T$ is the nonsingular matrix, 
\begin{equation}\label{matr}
T=\Big[\frac{u_1}{\sqrt{\lambda_1}},...,\frac{u_{n-1}}{\sqrt{\lambda_{n-1}}},\frac{u_n}{\sqrt{-\lambda_n}}
\Big].
\end{equation}
\end{lemma}

\begin{lemma}\label{lemma31}
A Lorenz cone $\mathcal{C_L}$ given as in (\ref{ellicone}) is
an invariant set for the discrete  system (\ref{dyna2}) if and only
if the standard Lorenz cone $\mathcal{K}_n$ is an invariant set
for the following discrete system
\begin{equation}\label{dyna31}
x_{k+1}=T^{-1}B_kTx_k,
\end{equation}
where $T$ is defined as (\ref{matr}).
\end{lemma}

\begin{proof}
The Lorenz cone $\mathcal{C_L}$  is an invariant set for
(\ref{dyna2}) if and only if $B_k\mathcal{C_L}\subseteq
\mathcal{C_L}$. This holds if and only if
$B_kT\mathcal{K}_n\subseteq T\mathcal{K}_n$, which is equivalent
to $T^{-1}B_kT\mathcal{K}_n\subseteq \mathcal{K}_n$.
\end{proof}

The invariance condition of a Lorenz
cone for discrete systems is presented in Theorem \ref{themcone12}. Although we have developed such invariance condition
independently,  it was brought to our attention recently that the
invariance condition is the same as the one proposed by
Aliluiko and Mazko in \cite{alil}. But our proof is
more straightforward.

\begin{theorem}\label{themcone12}
A Lorenz cone $\mathcal{C_L}$ (or $-\mathcal{C_L}$) given as in
(\ref{ellicone}) is an invariant set for the discrete  system
(\ref{dyna2}) if and only if
\begin{equation}\label{ellip33}
\exists\, \mu\geq0, \text{ such that }B_k^TQB_k-\mu Q\preceq 0,~ u_n^TB_ku_n\geq0, ~ u_n^TB_kQ^{-1}B_k^Tu_n\leq0,
\end{equation}
where $u_n$ is the eigenvector corresponding to the unique negative
eigenvalue $\lambda_n$ of $Q.$
\end{theorem}

\begin{proof}
Since $B_k\mathcal{C_L}\subseteq\mathcal{C_L}$ if and only $B_k(-\mathcal{C_L})\subseteq-\mathcal{C_L},$ we only present the proof for
$\mathcal{C_L}$. For an arbitrary $x\in \mathcal{C_L}$,
by Theorem \ref{them31}, we have that $B_kx\in
\mathcal{C_L}$ or $B_kx\in -\mathcal{C_L}$ if and only if condition
(\ref{ellip32}) is satisfied. To ensure that only
$B_kx\in\mathcal{C_L}$ holds, some additional conditions should be
added.

 According to Lemma \ref{lemma31}, we may consider $\mathcal{K}_n$ and
the discrete system (\ref{dyna31}), where the coefficient matrix,
denoted by $\tilde{A}$,  can be explicitly written as
$$
\tilde{A}=T^{-1}B_kT =\left[
      \begin{array}{ccc}
       u_1^TB_ku_1 & \cdots & \sqrt{-\frac{\lambda_1}{\lambda_n}}u_1^TB_ku_n \\
       \vdots & \ddots & \vdots \\
       \sqrt{-\frac{\lambda_n}{\lambda_1}}u_n^TB_ku_1  & \cdots & u_n^TB_ku_n \\
       \end{array}
       \right].
$$
Then, according to Theorem \ref{them31},  condition (\ref{ellip32})
is equivalent to 
\begin{equation}\label{c1}
\exists\, \mu\geq 0, \text{ such that
}(T^{-1}B_kT)^T\tilde{I}T^{-1}B_kT-\mu \tilde{I}\preceq 0,
\end{equation}
where $\tilde{I}=\text{{diag}}\{1,...,1,-1\}$. Note that
$T^TQT=\tilde{I}$, condition (\ref{c1}) is equivalent to
\begin{equation*}
\exists\, \mu\geq 0, \text{ such that } B_k^TQB_k-\mu Q\preceq 0.
\end{equation*}
Recall that we denote the $i$-th row of a matrix $M$ by
$M_i^T.$ Also, the second constraint in the
formulae of $\mathcal{K}_n$ requires
that for every $x\in \mathcal{K}_n$ the last coordinate
in $x$ is nonnegative. Since  $\tilde{A}\mathcal{K}_n\subseteq\mathcal{K}_n$, we have $ \tilde{A}_n^Tx\geq0, \text{ for all } x\in \mathcal{K}_n.$
Note that $\mathcal{K}_n$ is a self-dual cone, we have $ \tilde{A}_n^Tx\geq0, \text{ for all } x\in \mathcal{K}_n$ if
and only if $\tilde{A}_n\in \mathcal{K}_n$. Now
we have
\begin{equation}\label{row}
\tilde{A}_n^T =\sqrt{-\lambda_n}
\Big(\frac{1}{\sqrt{\lambda_1}}u_n^TB_ku_1,\frac{1}{\sqrt{\lambda_2}}u_n^TB_ku_2,...,\frac{1}{\sqrt{-\lambda_n}}u_n^TB_ku_n\Big)
=\sqrt{-\lambda_n}u_n^TB_kT.
\end{equation}
Substituting the value of $\tilde{A}_n^T$ given by the right side of (\ref{row}) into the first inequality in the formulae
of $\mathcal{K}_n$, we have
\begin{equation}\label{conin}
 -\lambda_n(T^TB_k^Tu_n)^T\tilde{I}(T^TB_k^Tu_n)\leq0.
\end{equation}
Since $\lambda_n<0 $ and
$T\tilde{I}T^T=\sum_{i=1}^n\frac{u_iu_i^T}{\lambda_i}=Q^{-1}$, where
the second equality is due to the spectral decomposition of
$Q^{-1}$, we have that (\ref{conin}) is equivalent to $
u_n^TB_kQ^{-1}B_k^Tu_n\leq0. $ Also, substituting (\ref{row}) into the
second inequality in the formulae of $\mathcal{K}_n$ yields
$u_n^TB_ku_n\geq0.$  The proof is complete.
\end{proof}

\begin{remark}\label{cor6}
The inequality system $u_n^TB_kQ^{-1}B_k^Tu_n\leq0$ and $u_n^TB_ku_n\geq0$ holds if and only if
$u_n^TB_kx\geq0$, for all $x\in \mathcal{C_L}.$
\end{remark}
\begin{proof}
Since $x^TQx\leq0$ can be written as
$x^TU\Lambda^{\frac{1}{2}}\tilde{I}\Lambda^{\frac{1}{2}}U^Tx\leq0$,
we have $x\in \mathcal{C_L}$ if and only if
$\Lambda^{\frac{1}{2}}U^Tx\in \mathcal{K}_n.$ Similarly, since
$Q^{-1}=U\Lambda^{-\frac{1}{2}}\tilde{I}\Lambda^{-\frac{1}{2}}U^T,$
we have $u_n^TB_kQ^{-1}B_k^Tu_n\leq0$ can be written as
$u_n^TB_kU\Lambda^{-\frac{1}{2}}\tilde{I}\Lambda^{-\frac{1}{2}}U^TB_k^Tu_n\leq0$,
which yields $\Lambda^{-\frac{1}{2}}U^TB_k^Tu_n\in \mathcal{K}_n\cup(-\mathcal{K}_n).$
Since the set $\mathcal{K}_n$ is a self-dual cone, we have $
(\Lambda^{-\frac{1}{2}}U^TB_k^Tu_n)^T(\Lambda^{\frac{1}{2}}U^Tx)\geq0 $,
which can be simplified to $ u_n^TB_kx\geq0 $, for all $x\in
\mathcal{C_L}.$
\end{proof}

The normal plane of the eigenvector $u_n$ that contains the origin
separates $\mathbb{R}^n$ into two half spaces.  Corollary \ref{cor6}
presents a geometrical interpretation that $A$ transforms
the Lorenz cone $\mathcal{C_L}$ to the half space that contains
eigenvector $u_n$, {i.e.}, $B_k\mathcal{C_L}\subseteq\{y\,|~u_n^Ty\geq0\}$. Moreover, note that
$u_n^TB_kx=(B_k^Tu_n)^Tx,$ which shows that the vector $B_k^Tu_n$ is in
the dual cone of $\mathcal{C_L}.$

\begin{corollary}\label{cor5}
If condition (\ref{ellip33}) holds, then
\begin{equation}\label{con27}
0\leq \mu\leq \frac{u_n^TB_k^TQB_ku_n}{\lambda_n}.
\end{equation}
\end{corollary}
\begin{proof}
 The proof is analogous to the one given in the proof of Corollary
\ref{cor51}.
\end{proof}

The interval for the scalar $\mu$ in condition (\ref{con27}) is wider
but simpler than the one presented in Corollary \ref{cor51}.
Analogous to Corollary \ref{cor211}, we present an intuitive
geometrical interpretation of $\mu$ for Lorenz cones.

\begin{corollary}\label{prop2}
The relationship between the vector  $B_ku_n, $ and the scalars $
u_n^TB_k^TQB_ku_n$, and $\mu$ are as follows:
\begin{itemize}
  \item If $B_ku_n\notin \mathcal{C_L}\cup(- \mathcal{C_L})$, then
$\mu$ satisfying (\ref{con27}) does not exist.
  \item If $B_ku_n\in \partial\mathcal{C_L}\cup(-\partial
\mathcal{C_L})$ and (\ref{con27}) holds, then $\mu=0$.
  \item If $B_ku_n\in \emph{int }(\mathcal{C_L})\cup(- \emph{int
}(\mathcal{C_L}))$ and (\ref{con27}) holds, then  $\mu\in \Big[0,
\frac{u_n^TB_k^TQB_ku_n}{\lambda_n}\Big].$
\end{itemize}
\end{corollary}

\subsubsection{Invariance Conditions for Continuous Systems} 

Now we consider the invariance condition of Lorenz cones for the continuous system. 
We also need to analyze the eigenvalue of a sum of two symmetric matrices for the invariance conditions for continuous systems. The following lemma is a useful tool in our analysis. It shows a fact that the spectrum of a matrix is stable under a small perturbation by another matrix. Since the statement is obvious, we omit the proof. 

%
\begin{lemma}\label{lemma2}
Let $M$ and $N$ be two symmetric matrices. Then
\begin{itemize}
  \item if there exists a $\hat\tau>0$,
such that $M+\tau N\preceq0,$ for $0<\tau\leq \hat\tau$,
 then $M\preceq0.$
  \item if $M\prec0$, then there exists  a $\hat\tau>0$,
such that $M+\tau N\preceq0,$ for $0<\tau\leq \hat\tau$.
\end{itemize}
\end{lemma}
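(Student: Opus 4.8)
The plan is to deduce both statements directly from the Weyl inequalities in Lemma~\ref{lemmaw}, using the entries of the spectrum as continuous functions of a perturbation parameter. For the first statement, I would apply the Weyl inequality (the left-hand inequality in (\ref{weyl1})) with the choice that isolates the top eigenvalue of the sum: writing $M = (M+\tau N) + (-\tau N)$ and taking indices so that $\lambda_1(M) \le \lambda_1(M+\tau N) + \lambda_n(-\tau N)$. Since $\lambda_n(-\tau N) = -\tau\,\lambda_1(N)$, this gives $\lambda_1(M) \le \lambda_1(M+\tau N) - \tau\,\lambda_1(N)$. By hypothesis $M + \tau N \preceq 0$ for all $0 < \tau \le \hat\tau$, hence $\lambda_1(M+\tau N) \le 0$, so $\lambda_1(M) \le -\tau\,\lambda_1(N)$ for every such $\tau$. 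Letting $\tau \to 0_+$ yields $\lambda_1(M) \le 0$, i.e. $M \preceq 0$.

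For the second statement I would argue in the reverse direction. Suppose $M \prec 0$, so $\lambda_1(M) < 0$. Apply the dual Weyl inequality (the right-hand inequality in (\ref{weyl1})) to $M + \tau N$ with indices chosen to bound its largest eigenvalue: $\lambda_1(M+\tau N) \le \lambda_1(M) + \lambda_n(\tau N) \le \lambda_1(M) + |\tau|\,\lambda(N)$, where $\lambda(N) = \max\{|\lambda_i(N)|\}$ as in the notation section (or one can simply use $\lambda_n(\tau N) \le \tau\,\|N\|$ for $\tau>0$). Choosing $\hat\tau = -\lambda_1(M)/(\lambda(N)+1) > 0$ (and noting the degenerate case $N = 0$ is trivial), we get $\lambda_1(M + \tau N) < 0$ for all $0 < \tau \le \hat\tau$, hence $M + \tau N \preceq 0$ (indeed $\prec 0$) on that range.

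The only mild subtlety — and the step I would be most careful about — is getting the index bookkeeping in the Weyl inequalities right so that the relevant term really is the top eigenvalue of the perturbation contributing a multiple of $\tau$ that vanishes in the limit; the choices $i=1$, $j=n$ (so $i+j-1 = n$) for the Weyl inequality and $i=1$, $j=n$ (so $i+j-n = 1$) for the dual Weyl inequality are the ones that work, but it is worth double-checking the admissible range $1 \le i, j, i+j-1 \le n$ is respected. Everything else is routine: positive semidefiniteness is equivalent to $\lambda_1(\cdot) \le 0$, and continuity of the bound in $\tau$ lets us pass to the limit. No compactness or spectral-perturbation machinery beyond Weyl's inequality is needed.
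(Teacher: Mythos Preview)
Your overall strategy is correct and in the same spirit as the paper's: both arguments control $\lambda_1$ via Lemma~\ref{lemmaw}. The paper proves the first item by contradiction after a short case split on the sign of $\lambda_n(N)$, using the dual Weyl bound $\lambda_1(M+\tau N)\ge \lambda_1(M)+\lambda_n(\tau N)$; your direct limit argument is a little cleaner and avoids the case analysis.

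However, the very subtlety you flagged---the index bookkeeping---is in fact wrong in both parts, and the inequalities you display do not follow from Weyl as stated. For the first item you write $M=(M+\tau N)+(-\tau N)$ and want an upper bound on $\lambda_1(M)$; this requires $i+j-1=1$, i.e.\ $i=j=1$ in the \emph{left} inequality of (\ref{weyl1}), giving
\[
\lambda_1(M)\le \lambda_1(M+\tau N)+\lambda_1(-\tau N)=\lambda_1(M+\tau N)-\tau\lambda_n(N),
\]
not $+\lambda_n(-\tau N)$. Your stated bound is strictly stronger and false in general (try $M=\mathrm{diag}(2,0)$, $N=\mathrm{diag}(-1,1)$, $\tau=1$). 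With the correct term, letting $\tau\to 0_+$ still yields $\lambda_1(M)\le 0$. For the second item you need an \emph{upper} bound on $\lambda_1(M+\tau N)$, so again the left (primal) Weyl inequality with $i=j=1$ is what applies:
\[
\lambda_1(M+\tau N)\le \lambda_1(M)+\lambda_1(\tau N)\le \lambda_1(M)+\tau\,\lambda(N).
\]
The dual inequality with $i=1,j=n$ that you invoke gives the reverse direction $\lambda_1(M)+\lambda_n(\tau N)\le \lambda_1(M+\tau N)$ and is useless here. With the indices corrected to $i=j=1$ in both parts, your choice of $\hat\tau$ and the rest of the argument go through unchanged.
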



Similar to the case for discrete system, 
we first consider the invariance condition of the nonconvex set $\mathcal{C_L}\cup (-\mathcal{C_L})$ for the continuous system.

\begin{theorem}\label{ellipthmcon2}
The nonconvex set $\mathcal{C_L}\cup (-\mathcal{C_L})$  is an
invariant set for the continuous system (\ref{dyna1}) if and only if
\begin{equation}\label{ellip32con}
\exists\, \eta\in \mathbb{R}, \text{ such that } A^TQ+QA-\eta Q\preceq 0.
\end{equation}
\end{theorem}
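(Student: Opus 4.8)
The plan is to follow the same discretization strategy used throughout this paper: obtain the condition for the nonconvex set $\mathcal{C_L}\cup(-\mathcal{C_L})$ as an invariant set of the continuous system from the already-established discrete condition in Theorem~\ref{them31}, applied to a backward (or forward) Euler discretization. For the ``only if'' direction, I would invoke the result of \cite{song2}: if $\mathcal{C_L}\cup(-\mathcal{C_L})$ is invariant for \eqref{dyna1}, then there is a $\hat\tau>0$ such that the set is also invariant for the discrete system $x_{k+1}=(I-A\Delta t)^{-1}x_k$ for all $0\leq\Delta t\leq\hat\tau$. Theorem~\ref{them31} then gives, for each such $\Delta t$, a scalar $\mu(\Delta t)\geq 0$ with $(I-A\Delta t)^{-T}Q(I-A\Delta t)^{-1}-\mu(\Delta t)Q\preceq 0$. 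Multiplying on both sides by $(I-A\Delta t)^T$ and $(I-A\Delta t)$, this becomes $Q-\mu(\Delta t)(I-A\Delta t)^TQ(I-A\Delta t)\preceq 0$, i.e. $(1-\mu(\Delta t))Q+\mu(\Delta t)\Delta t\,(A^TQ+QA)-\mu(\Delta t)\Delta t^2 A^TQA\preceq 0$. Dividing by $\mu(\Delta t)\Delta t$ (after checking $\mu(\Delta t)>0$ for small $\Delta t$, which follows since $Q$ is indefinite so $Q\preceq 0$ fails) and setting $\eta(\Delta t)=\frac{\mu(\Delta t)-1}{\mu(\Delta t)\Delta t}$, one gets $A^TQ+QA-\eta(\Delta t)Q-\Delta t\,A^TQA\preceq 0$. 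Letting $\Delta t\to 0$ and using Lemma~\ref{lemma2} (spectral stability under small perturbations), it suffices to show $\eta(\Delta t)$ stays bounded so that a subsequential limit $\eta$ exists with $A^TQ+QA-\eta Q\preceq 0$.

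For the ``if'' direction, suppose $A^TQ+QA-\eta Q\preceq 0$ for some $\eta\in\mathbb{R}$. I would show directly, via Lemma~\ref{lemma4}, that $(Ax)^TQx\leq 0$ for every $x\in\partial(\mathcal{C_L}\cup(-\mathcal{C_L}))=\{x\,|\,x^TQx=0\}$: on this boundary, $x^T(A^TQ+QA-\eta Q)x = 2(Ax)^TQx - \eta\cdot 0 = 2(Ax)^TQx\leq 0$, which is exactly Nagumo's condition as packaged in Lemma~\ref{lemma4}. (Strictly, Lemma~\ref{lemma4} is stated for $\mathcal{C_L}$ itself; the same computation with tangent cones at boundary points of the quadratic set $\{x^TQx\leq 0\}$, whose outer normal at $x$ is $Qx$, gives the analogue for the nonconvex set — this parallels the remark that the $S$-lemma argument does not need convexity.) This yields invariance immediately.

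The main obstacle I anticipate is the boundedness of $\eta(\Delta t)=\frac{\mu(\Delta t)-1}{\mu(\Delta t)\Delta t}$ as $\Delta t\to 0$, equivalently showing $\mu(\Delta t)=1+O(\Delta t)$. One expects $\mu(\Delta t)\to 1$ as $\Delta t\to 0$ because $(I-A\Delta t)^{-1}\to I$ and the constraint degenerates, but the rate needs care: from the inequality $(1-\mu(\Delta t))Q+\mu(\Delta t)\Delta t(A^TQ+QA)-\mu(\Delta t)\Delta t^2A^TQA\preceq 0$, multiplying by the eigenvector $u_n$ of $Q$ (with $u_n^TQu_n=\lambda_n<0$) gives $(1-\mu(\Delta t))\lambda_n + O(\mu(\Delta t)\Delta t)\leq 0$, hence $(\mu(\Delta t)-1)\lambda_n\geq -O(\mu(\Delta t)\Delta t)$; since $\lambda_n<0$ this bounds $\mu(\Delta t)-1$ from above by $O(\mu(\Delta t)\Delta t)$, and a symmetric estimate using an eigenvector with positive eigenvalue bounds it from below, giving $|\mu(\Delta t)-1|\leq C\mu(\Delta t)\Delta t$ and thus $|\eta(\Delta t)|\leq C$ for small $\Delta t$. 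A Bolzano–Weierstrass argument then extracts the required limit $\eta$, and applying Lemma~\ref{lemma2} to $M=A^TQ+QA-\eta Q$ (with the residual $\Delta t\,A^TQA$ as the vanishing perturbation $N$) completes the proof. I would also double-check the edge case where $\mu(\Delta t)=0$ is forced; this cannot persist as $\Delta t\to 0$ since it would require $(I-A\Delta t)^{-T}Q(I-A\Delta t)^{-1}\preceq 0$, i.e. $Q\preceq 0$, contradicting ${\rm inertia}\{Q\}=\{n-1,0,1\}$ with $n\geq 2$.
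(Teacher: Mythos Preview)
Your proposal is correct and follows essentially the same strategy as the paper's proof. The ``if'' direction is identical: both you and the paper evaluate $x^T(A^TQ+QA-\eta Q)x$ on the boundary $\{x^TQx=0\}$ and invoke Lemma~\ref{lemma4}. The ``only if'' direction likewise uses backward Euler together with Theorem~\ref{them31} and passes to the limit $\Delta t\to 0$; the only difference is algebraic. The paper expands $(I-A\Delta t)^{-1}$ as a Neumann series, collects the higher-order terms into a remainder $K(\Delta t)$, bounds its norm, and then sets $\eta=\frac{\mu(\Delta t)-1}{\Delta t}$. You instead conjugate the semidefinite inequality by $(I-A\Delta t)$ to obtain a finite polynomial expression and divide by $\mu(\Delta t)\Delta t$, which is tidier and avoids the series estimate. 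Your eigenvector argument for the boundedness of $\eta(\Delta t)$ followed by a subsequential limit is also more explicit than the paper's appeal to Lemma~\ref{lemma2} (where, strictly speaking, the ``$M$'' depends on $\Delta t$ through $\mu(\Delta t)$). One small cleanup: once you have $\eta(\Delta t_k)\to\eta$, you do not need Lemma~\ref{lemma2} at all---simply pass to the limit in $A^TQ+QA-\eta(\Delta t_k)Q-\Delta t_k A^TQA\preceq 0$ using closedness of the negative semidefinite cone.
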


\begin{proof}
For the  ``if" part, i.e., condition (\ref{ellip32con}) holds,
then for every $x\in \partial \mathcal{C_L}\cup
(-\partial\mathcal{C_L})$, we have
$(Ax)^TQx=(Ax)^TQx-\frac{\eta}{2} x^TQx=\frac{1}{2}x^T(A^TQ+QA-\eta
Q)x\leq0.$
Thus, by Lemma \ref{lemma4}, the set
$\mathcal{C_L}\cup (-\mathcal{C_L})$  is an invariant set for continuous system.

Next, we prove the ``only if" part. According to Theorem \ref{songthm},  there exists a $\hat{\tau}>0$,
such that for every $ 0\leq \Delta
t\leq\hat\tau $, $\mathcal{C_L}\cup (-\mathcal{C_L})$ is also an invariant set for $x_{k+1}=(I-A\Delta t)^{-1}x_{k}$. 
By Theorem
\ref{them31} and $ (I-A\Delta t)^{-1}=I+A\Delta
t+A^2\Delta t^2+\cdots $, we have $\exists ~\mu(\Delta t) \geq 0,$  such that 
\begin{equation}\label{ttt21}
\frac{1-\mu(\Delta
t)}{\Delta t}Q+(A^TQ+QA)+\Delta t K(\Delta t)\preceq0,
\end{equation}
where $K(\Delta t)=(A^TQA+(A^{2})^TQ+QA^{2})+\Delta
t((A^{2})^TQA+A^{T}QA^2+(A^{3})^TQ+QA^{3})+\mathcal{O}((\Delta t)^2)$.  Since $Q$ and
$A$ are constant matrices, and applying  the fact that
$\|M\|=\|M^T\|$, $\|M+N\|\leq\|M\|+\|N\|$ and
 $\|MN\|\leq\|M\|\|N\|$, we have
\begin{equation*}
\begin{split}
\|K(\Delta t)\|&\leq\sum_{i=3}^\infty i\|Q\|\|A\|^{i-1}(\Delta
t)^{i-3}=\|Q\|\|A\|^2\sum_{i=0}^\infty(i+3)(\Delta t\|A\|)^i \\
&= \|Q\|\|A\|^2\frac{3-2\Delta t \|A\|}{(1-\Delta t\|A\|)^2}\leq
8\|Q\|\|A\|^2,
\end{split}
\end{equation*}
where $\Delta t\leq \frac{5}{4}\|A\|^{-1}$ such that $(3-2\Delta t
\|A\|)/(1-\Delta t \|A\|)^2\leq 8$. Also, applying the
relationship between spectral radius $\rho(A)$ and its induced norm,
$\rho(A)\leq \|A\|$ (see \cite{Derz}), to $K(\Delta t)$, we have
$$
|\lambda_i(K(\Delta t))|\leq \rho(K(\Delta t)) \leq \|K(\Delta
t)\|\leq 8\|Q\|\|A\|^2, \text{ for } i\in \mathcal{I}(n),
$$
i.e., the eigenvalues of $K(\Delta t)$ are bounded. Let us denote $\eta(\Delta t)=\frac{\mu(\Delta
t)-1}{\Delta t}$. Then (\ref{ttt21}) is rewritten as 
\begin{equation}\label{equ34}
-\eta(\Delta t)Q+A^TQ+QA+K(\Delta t)\Delta t \preceq0.
\end{equation}
By multiplying both sides of (\ref{equ34}) by $u_n$, where $u_n$ is the eigenvector corresponding to the negative eigenvalue $\lambda_n$,  we have 
\begin{equation}
u_n^T(A^TQ+QA)u_n+\Delta t u_n^TK(\Delta t)u_n\leq \eta(\Delta t)\lambda_n.
\end{equation}
Since $K(\Delta t)$ is bounded, we have $\Delta t u_n^TK(\Delta t)u_n\rightarrow 0$ as $\Delta t\rightarrow 0$.  This implies that $\eta(\Delta t)$ is bounded for $0\leq \Delta t\leq \hat\tau$ for some $\hat\tau>0.$ 
Therefore\footnote{Here we use the fact that every bounded sequence has a convergent subsequence, see, e.g.,  \cite{rudin}. }, we can take a subsequence $\{\Delta t_\ell\}$ such that  $\eta(\Delta t_\ell)\rightarrow \eta$ as $\Delta t_\ell\rightarrow 0$, which yields (\ref{ellip32con}).
 The proof is complete.
\end{proof}

The approach in the proof of Theorem \ref{ellipthmcon2} can be also used to prove
Theorem \ref{ellipthmcon1}. The only remaining invariance condition is the one of a Lorenz cone for continuous system.

\begin{theorem}\label{ellipthmcon3}
A Lorenz cone $\mathcal{C_L}$ (or $ -\mathcal{C_L}$)  is an
invariant set for the continuous  system (\ref{dyna1}) if and only
if (\ref{ellip32con}) holds. 
\end{theorem}
\begin{proof}
Consider the continuous system with
 $x_0\in \mathcal{C_L}$,
according to Theorem \ref{ellipthmcon2},  the trajectory $x(t)$ will
stay in $\mathcal{C_L}\cup(-\mathcal{C_L})$ if  condition
(\ref{ellip32con}) is satisfied. If $x(t)$ would move over to
$-\mathcal{C_L}$, then $x(t)$ must go through the origin,
i.e., $x(t^*)=0$ for some $t^*\geq0.$ Note that $x(t)=e^{A(t-t^*)}x(t^*)=0$ for any $t>t^*,$ i.e., the origin is an equilibrium point,
which means $\mathcal{C_L}$ is an invariant set for the continuous
system. Thus the theorem is immediate.
\end{proof}

In fact, a direct proof of Theorem \ref{ellipthmcon3} can be given as follows: one can also prove that the second and third
conditions in (\ref{ellip33}) hold by choosing sufficiently small $\Delta t$. To be specific, for the second condition in
(\ref{ellip33}), we have
\begin{equation}\label{eq131}
u_n^T(I-\Delta t A)^{-1}u_n\geq0,\text{ if and only if
}\|u_n\|^2+\sum_{i=1}^\infty (\Delta t)^iu_n^TA^iu_n\geq 0,
\end{equation}
where the second term, when $\Delta t <\|A\|^{-1},$ can be
bounded as follows: $ \left|\sum_{i=1}^\infty (\Delta
t)^iu_n^TA^iu_n\right|\leq\|u_n\|^2\frac{\Delta t \|A\|}{(1-\Delta t
\|A\|)}. $
Thus, we can choose the time step less than the half of reciprocal of the norm of $A$, i.e., $\Delta
t<0.5\|A\|^{-1}$, such that condition (\ref{eq131}) holds.
Similarly, the third condition in (\ref{ellip33}) can be transformed
to
\begin{equation}\label{eq512}
u_n^T(I-\Delta t A)^{-1}Q^{-1}(I-\Delta t A)^{-T}u_n\leq 0, \text{ if
and only if } \frac{1}{\lambda_n}\|u_n\|^2+ K(\Delta t)\leq0,
\end{equation}
where we use the fact that $u_n$ is the eigenvector corresponding to the eigenvalue 
$ \lambda_n^{-1}$ of $Q^{-1}$, and  $K(\Delta t)=\Delta t
u_n^T(AQ^{-1}+Q^{-1}A^T)u_n+(\Delta
t)^2u_n^T(AQ^{-1}A+A^2Q^{-1}+Q^{-1}A^{2T})u_n+\cdots.$ We note that 
inertia$\{Q\}=\{n-1,0,1\}$ implies
inertia$\{Q^{-1})\}=\{n-1,0,1\},$ then we have that $Q^{-1}$ exists,
which yields the following:  $ |K(\Delta t)|\leq \|u\|^2(2\Delta t
\|A\|\|Q^{-1}\|+3\Delta
t^2\|A\|^2\|Q^{-1}\|+\cdots)=\|u\|^2\|Q^{-1}\|\tfrac{2\Delta t
\|A\|-(\Delta t \|A\|)^2}{(1-\Delta t \|A\|)^2}. $ We can choose
$\Delta t\leq
\min\{0.5\|A\|^{-1},(\|A\|(1-4\lambda_n\|Q^{-1}\|)^{-1}\},$
such that (\ref{eq512}) holds. In fact,
\begin{equation*}
\begin{split}
\frac{1}{\lambda_n}\|u_n\|^2+ K(\Delta t)&\leq
\|u_k\|^2\Big(\frac{1}{\lambda_n}+\|Q^{-1}\|\frac{2\Delta t
\|A\|-(\Delta t \|A\|)^2}{(1-\Delta t
\|A\|)^2}\Big)\\
&\leq\|u\|^2\Big(\frac{1}{\lambda_n}+\|Q^{-1}\|\frac{4\Delta
t \|A\|}{1-\Delta t \|A\|}\Big)\leq0.
\end{split}
\end{equation*}

Condition (\ref{ellip32con}) is the same as the one presented  in
\cite{stern}, whose proof is much more complicated than ours.
Finding the value of $\eta$ in Theorem \ref{ellipthmcon2} and
\ref{ellipthmcon3} is essentially a semidefinite optimization
problem. For example, we can use the following semidefinite optimization problem:
\begin{equation}\label{eqdde}
\max \{ \eta\in \mathbb{R}~|~A^TQ+QA-\eta Q\preceq0\}.
\end{equation}
When the optimal solution $\eta^*$ of (\ref{eqdde}) exists, then by Theorem \ref{ellipthmcon3} we can claim that the Lorenz cone is an invariant set for the continuous system. 
Various celebrated SDO solvers, {e.g.}, SeDuMi, CVX, and
SDPT3, can be used to solve SDO problem (\ref{eqdde}).

\begin{corollary}\label{cor511}
If condition (\ref{ellip32con}) holds, then
\begin{equation}\label{con2772}
\max_{1\leq i\leq n-1}\left\{{u_i^T(A^T+A)u_i}\right\}\leq \eta\leq
u_n^T(A^T+A)u_n.
\end{equation}
\end{corollary}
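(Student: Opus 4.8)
The plan is to mimic the argument used in the proof of Corollary \ref{cor51}, which obtains bounds on the scalar in a matrix inequality of the form $M - \mu Q \preceq 0$ by testing the inequality against the eigenvectors of $Q$. Here the relevant inequality is condition (\ref{ellip32con1}) (equivalently (\ref{ellip32con})), namely $A^TQ + QA - \eta Q \preceq 0$, and we exploit that $Q$ has spectral decomposition with orthonormal eigenvectors $u_1,\dots,u_n$, where $Qu_i = \lambda_i u_i$, $\lambda_i > 0$ for $i \in \mathcal{I}(n-1)$ and $\lambda_n < 0$.

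First I would fix $i \in \mathcal{I}(n)$ and multiply the matrix inequality $A^TQ + QA - \eta Q \preceq 0$ on the left by $u_i^T$ and on the right by $u_i$, obtaining the scalar inequality
\begin{equation*}
u_i^T(A^TQ + QA)u_i - \eta\, u_i^TQu_i \leq 0.
\end{equation*}
Next I would simplify the two terms. For the first term, since $Qu_i = \lambda_i u_i$ and $u_i^TQ = \lambda_i u_i^T$, we have $u_i^T(A^TQ + QA)u_i = u_i^TA^TQu_i + u_i^TQAu_i = \lambda_i u_i^TA^Tu_i + \lambda_i u_i^TAu_i = \lambda_i\, u_i^T(A^T + A)u_i$. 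For the second term, $u_i^TQu_i = \lambda_i u_i^Tu_i = \lambda_i$ by orthonormality. Hence the inequality becomes $\lambda_i\big(u_i^T(A^T+A)u_i - \eta\big) \leq 0$. Dividing by $\lambda_i$, the sign of which depends on whether $i \leq n-1$ or $i = n$, I get $u_i^T(A^T+A)u_i - \eta \leq 0$ for $i \in \mathcal{I}(n-1)$ (since $\lambda_i > 0$) and $u_n^T(A^T+A)u_n - \eta \geq 0$ (since $\lambda_n < 0$). Taking the maximum over $i \in \mathcal{I}(n-1)$ in the first family of inequalities and combining with the reversed inequality for $i = n$ yields exactly (\ref{con2772}).

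The argument is entirely routine and there is no real obstacle; the only point requiring a moment's care is the sign flip when dividing by $\lambda_n < 0$, which is precisely what makes $u_n^T(A^T+A)u_n$ an upper bound rather than a lower bound for $\eta$. One should also note that the hypothesis guarantees existence of some valid $\eta$, so the displayed chain of inequalities is consistent (the claimed lower bound does not exceed the claimed upper bound); this mirrors the remark following Corollary \ref{cor51} and need not be belabored.
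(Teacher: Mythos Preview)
Your proposal is correct and follows essentially the same route as the paper: the paper's proof simply remarks that the argument mimics Corollary~\ref{cor51}, noting $u_i^T(A^TQ+QA)u_i = 2(Au_i)^TQu_i$ and $Qu_i = \lambda_i u_i$, which is exactly the eigenvector-testing computation you carried out in full.
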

\begin{proof}
The proof is similar to the one presented in the proof of  Corollary \ref{cor51}
by noting that $u_i^T(A^TQ+QA) u_i=2 (Au_i)^TQu_i$, and
$Qu_i=\lambda_i u_i$.
\end{proof}

\section{Examples}\label{sec:exam}
In this section, we present some simple examples to illustrate the invariance conditions presented in Section \ref{sec:invcond}. Since it is straightforward for discrete systems, we only present examples for continuous systems.
The following two examples consider polyhedral sets
for continuous systems.

\begin{example}
Consider the polyhedron $\mathcal{P}=\{(\xi,\eta)\,|\,\xi+\eta\leq1, -\xi+\eta\leq1,\xi-\eta\leq1,-\xi-\eta\leq1\}$, and the
continuous system $\dot{\xi}=-\xi,\dot{\eta}=-\eta.$
\end{example}

The solution of the system is $\xi(t)=\xi_0e^{-t}, \eta(t)=\eta_0e^{-t} $, so
$(\xi(t),\eta(t))\in \mathcal{P}$ for all $t\geq0,$ i.e., the polyhedron
is an invariant set for the continuous system provided that $(\xi_0,\eta_0)\in \mathcal{P}$. This can also be
verified by Theorem \ref{polythmcon}. We have
$$
H=-I_4, ~~G=\left[
          \begin{array}{rr}
            1 & 1 \\
            -1 & 1 \\
            1 & -1 \\
            -1 & -1 \\
          \end{array}
        \right],~~b=\left[
                      \begin{array}{c}
                        1 \\
                        1 \\
                        1 \\
                        1 \\
                      \end{array}
                    \right],
~~ A=-I_2,
$$
which satisfy  $HG=GA$ and $Hb\leq0$.  Thus  Theorem
\ref{polythmcon} yields that $\mathcal{P}$ is an
invariant set for this continuous system.

\begin{example}
Consider the polyhedral cone $\mathcal{C_P}$ generated by the
extreme rays $x^1=(1,1,1)^T,x^2=(-1,1,1)^T,x^3=(1,-1,1)^T,$ and
$x^4=(-1,-1,1)^T,$ and the continuous system
$\dot{\xi}=\xi,\dot{\eta}=\eta,\dot{\zeta}=\zeta.$
\end{example}

The solution of the system is
$\xi(t)=\xi_0e^t,~\eta(t)=\eta_0e^t,~\zeta(t)=\zeta_0e^t,$ thus one can easily verify
that the polyhedral cone is an invariant set for this continuous
system provided that $(\xi_0,\eta_0,\zeta_0)\in \mathcal{C_P}$. This can also be verified by Corollary \ref{coro:polycone}. We
have
$$
X=\left[
    \begin{array}{rrrr}
      1 & -1 & 1 & -1 \\
      1 & 1 & -1 & -1 \\
      1 & 1 & 1 & 1 \\
    \end{array}
  \right], ~\tilde{L}=I_4,~A=I_3,
$$
which satisfy  that $X\tilde{L}=AX$. Thus Corollary \ref{coro:polycone}
yields that $\mathcal{C_P}$ is an invariance
set for this continuous system.

The following two examples consider ellipsoids and Lorenz cones for continuous systems.
\begin{example}\label{exmp1}
Consider the  ellipsoid
$\mathcal{E}=\{(\xi,\eta)\;|\;\xi^2+\eta^2\leq1\}$, and the system
$\dot{\xi}=-\eta, \dot{\eta}= \xi.$
\end{example}

The solution of the system is $\xi(t)=\alpha\cos t+\beta\sin t$ and
$ \eta(t)=\alpha \sin t-\beta \cos t$, where $\alpha, \beta$ are two
parameters depending on the initial condition. The solution
trajectory is a circle, thus the system is invariant on this
ellipsoid. Also, we have
$$
A=\left[
    \begin{array}{cc}
      0 & -1 \\
      1 & 0 \\
    \end{array}
  \right], ~~
Q=I_2, ~~~A^TQ+QA= \left[
          \begin{array}{cc}
            0 & 0 \\
            0 & 0 \\
          \end{array}
        \right]\preceq0,
$$
which shows that, according to Theorem \ref{ellipthmcon1}, the
ellipsoid is an invariant set for this continuous system.

\begin{example}\label{exmp2}
Consider the Lorenz cone
$\mathcal{C_L}=\{(\xi,\eta,\zeta)\;|\;\xi^2+\eta^2\leq \zeta^2,
\zeta\geq0\}$, and the system
$\dot{\xi}=\xi-\eta, \dot{\eta}=\xi+\eta, \dot{\zeta}=\zeta.$
\end{example}

The solution is $\xi(t)=e^t(\alpha\cos t+\beta\sin t),$
$\eta(t)=e^t(\alpha\sin t-\beta\cos t)$ and $\zeta(t)=\gamma e^t,$
where $\alpha, \beta, \gamma$ are three parameters depending on the
initial condition. It is easy to verify that this Lorenz cone is an invariant set for the continuous
system.
Also, by letting $\eta\leq -2,$ we have
$$
A=\left[
    \begin{array}{ccc}
      1 & -1 & 0 \\
      1 & 1 & 0 \\
      0 & 0 & 1 \\
    \end{array}
  \right], ~~
Q=I_3, ~~A^TQ+QA+\eta Q= \left[
    \begin{array}{ccc}
      \eta+2 & 0 & 0 \\
      0 & \eta+2 & 0 \\
      0 & 0 & \eta+2 \\
    \end{array}
  \right]\preceq0,
$$
which shows that, according to Theorem \ref{ellipthmcon3}, the
Lorenz cone is an invariant set for this continuous system.


\section{Conclusions}\label{sec:con}

Invariant sets are important  both in the theory and for computational practice of dynamical systems. In this paper, we explore invariance conditions for four classic convex sets, for both linear discrete and continuous systems. In particular, these four convex sets are polyhedra, polyhedral cones, ellipsoids, and Lorenz cones, all of which have a wide range of applications in control theory.

In this paper, we present a novel, simple and unified method to derive  invariance conditions for linear dynamical systems. We first consider discrete systems, followed by continuous systems, since invariance conditions of the latter one are derived by using  invariance condition of the former one. For discrete systems, we introduce the Theorems of Alternatives, i.e., Farkas lemma and \emph{S}-lemma, to derive invariance conditions. We also show that by  applying the  \emph{S}-lemma one can extend  invariance conditions to any set represented by a quadratic inequality. The connection between discrete systems and continuous systems is built by using the forward or backward Euler methods, while the invariance is preserved with sufficiently small step size. Then we use elementary methods to derive invariance conditions for continuous systems. 
 This paper not only presents  invariance conditions of the four convex sets for continuous and discrete systems by using simple proofs, but also establishes a framework, which may be used for other convex sets as invariant sets, to derive  invariance conditions for both continuous and discrete systems.
 
Future research interests mainly focus on four directions. The first one is  extending the results of this  paper to nonlinear dynamical systems and more general sets. Some results on the extension to nonlinear system, one may refer to \cite{horv2016}. The second one is extending the study of invariant sets for discrete system on smooth manifold and on Lie groups, because discrete dynamical system has been 	extended to these more general settings \cite{fiori2, fiori1}. The third one is exploring the applications of the results in our paper in control and related fields. The fourth one is extending the invariance condition for partial differential equation and using other numerical methods, e.g., finite element method, finite difference method, adaptive time step methods, etc.


\section*{Acknowledgments}
This research is supported by a Start-up grant of Lehigh University and by
TAMOP-4.2.2.A-11/1KONV-2012-0012: Basic research for the development of
hybrid and electric vehicles. The TAMOP Project is supported by the European Union
and co-financed by the European Regional Development Fund. 

The authors are grateful to the referees for the constructive suggestions and comments.


\section*{References}
 \bibliographystyle{elsarticle-harv} 
 \bibliography{myref}







\end{document}